\theoremstyle{plain}
\newtheorem{theorem}{Theorem}
\newtheorem{defi}{Definition}
\newtheorem{lemma}{Lemma}
\newtheorem{prop}{Proposition}
\newtheorem{remark}{Remark}
\newtheorem{assumption}{Assumption}
\newtheorem{notations}{Notations}
\definecolor{darkblue}{rgb}{0.3,0.3,0.7}
\definecolor{dred}{rgb}{0.7,0.3,0.3}
\def\E{\mathbb{E}}
\def\P{\mathbb{P}}
\def\real{\mathbb{R}}
\def\D{\mathbb{D}}
\def\sumj{\sum_{j=1}^d}
\def\Dp{D^{\mathcal{P}}}
\def\ti{t_{i}}
\def\tio{t_{i+1}}
\title{The It\^o-Tanaka Trick : a non-semimartingale approach}
\author{Laure Coutin\footnote{IMT UMR CNRS 5219, Universit\'e Paul Sabatier 118, route de Narbonne, 31062 Toulouse Cedex 4, France. \; Email: \texttt{laure.coutin@math.univ-toulouse.fr}} \and Romain Duboscq\footnote{INSA de Toulouse, IMT UMR CNRS 5219, Universit\'e de Toulouse, 135 avenue de Rangueil 31077 Toulouse Cedex 4 France. \; Email: \texttt{duboscq@insa-toulouse.fr}} \and Anthony R\'eveillac\footnote{INSA de Toulouse, IMT UMR CNRS 5219, Universit\'e de Toulouse, 135 avenue de Rangueil 31077 Toulouse Cedex 4 France. \; Email: \texttt{anthony.reveillac@insa-toulouse.fr}}}
\begin{document}

\allowdisplaybreaks

\maketitle

\renewcommand{\thefootnote}{\fnsymbol{footnote}}

\begin{abstract}
In this paper we provide an It\^o-Tanaka-Wentzell trick in a non semimartingale context. We apply this result to the study of a fractional SDE with irregular drift coefficient. 
\end{abstract}

\noindent
\textbf{Keywords : }\\
\noindent
\textit{AMS Mathematics Subject Classification 2010: 60H07, 60H10 (Primary); 60G22, 35A02 (Secondary)}

\section{Introduction}
Consider the following ODE : 
\begin{equation}
\label{eq:ODE}
x_t = x_0 + \int_0^t b(s,x_s) ds, \quad t \in [0,T], \quad x_0 \in \real^d,
\end{equation}
with $b:[0,T] \times \real^d \to \real^d$ a given vector field. The well-posedness of this equation is obviously related to the smoothness of the coefficient $b$ and in particular famous counter-examples to uniqueness can be derived even in dimension one. The so-called Peano example fits into that paradigm and consists of choosing : 
$$ d=1, \quad x_0=0, \quad b(t,x):=\sqrt{2} \, \textrm{sgn}(x) \sqrt{|x|},$$
for which any mapping of the form $t \mapsto \pm (t-t_0)^2$ (with $t_0$ in $[0,T]$) is solution to (\ref{eq:ODE}). However, the seminal works \cite{veretennikov1981strong,zvonkin1974transformation} put in light the remarkable fact according to which the well-posedness of the ODE can be obtained under very week conditions on $b$ by adding a random force to the system, which then becomes the following SDE :
\begin{equation}
\label{eq:Intro_SDE}
X_t = x_0 + \int_0^t b(s,X_s) ds + \sigma W_t, \quad t \in [0,T], \quad x_0 \in \real^d,
\end{equation}
with $\sigma>0$ and $W$ a Brownian motion on $\real^d$ (we use the notation $X$ to stress than the solution is not deterministic anymore). This phenomenon is usually referred to \textit{regularization by noise effect} or \textit{stochastic regularization}. To be more precise, pathwise uniqueness can be obtained for Equation (\ref{eq:Intro_SDE}) for any vector field $b$ satisfying weak regularity conditions : a boundedness assumption (\cite{veretennikov1981strong}) or a Ladyzhenskaya-Prodi-Serrin (LPS) type condition (see \cite{krylov2005strong}) $b \in L^q([0,T];L^p(\real^d))$ :
\begin{equation}
\label{eq:LPS}
\frac{d}{p} + \frac{2}{q} < 1, \quad p,q \geq 2.
\end{equation}
In addition, this result can be captured and quantified by the so-called \textit{It\^o-Tanaka trick} or \textit{Zvonkin's tranform} (\cite{flandoli2010well}) which reads as follows : 
\begin{equation}\label{eq:ItoTanakaSemiMat1}
\int_0^T b(t,X_t+x) dt = -F(0,X_0+x) - \int_0^T \nabla F(t,X_t+x) \cdot dW_t,
\end{equation}
and which relates the process $X$ to the solution $F:[0,T]\times \real^d \to \real^d$ of the parabolic system of PDEs
\begin{equation}
\label{eq:intro_PDE}
\left\{\begin{array}{ll}
\partial_t F(t,x) +  \mathcal{L}^X F(t,x) = b(t,x), \quad \forall (t,x) \in [0,T)\times \real^d,
\\ F(T,x)=0,\quad \forall x\in\real^d,
\end{array}\right.
\end{equation} 
with $\mathcal{L}_s^X \Phi(x):= b(s,x) \cdot \nabla \Phi(x) + \frac12 \Delta \Phi(x)$. Indeed, one can prove (see for a precise statement \cite{flandoli2010well,krylov2005strong}) that the solution $F$ to the PDE admits two weak derivatives in space and one in the time variable which entails that for any positive time $t$, the mapping 
\begin{equation*}
x\mapsto \int_0^t b(s,X_s+ x) ds
\end{equation*}
is more regular than the field $b$ itself (recall Relation (\ref{eq:ItoTanakaSemiMat1})).   
% 
%\begin{equation}\label{eq:ItoTanakaSemiMat}
%\int_0^T b(t,X_t+ x) dt = \int_0^T P^X_tb(t,X_0 + x)dt + \int_0^T\int_s^T \nabla P^X_{t-s}b(t,X_s + x) dt ds,
%\end{equation}
%where $(P^X_t)_{t\in [0,T]}$ denotes the semi-group associated to the solution of (\ref{eq:Intro_SDE}). We will refer to ``regularization by noise" this  phenomenon as the mapping :
\\\\ 
\noindent
Note that investigating such regularization effect for ODEs finds interest in fluid mechanics equations which take the form of (non-linear) transport PDEs (we refer to \cite{Beck_Flandoli_Gubinelli_Maurelli} for a survey on that account). For that purpose, the LPS condition (\ref{eq:LPS}) provides a natural framework in which fits this paper. However determining if the counterpart of the previous paradigm for ODEs transfers to non-linear transport PDEs is valid or not is mainly an open question. Although, most references in the literature, where regularization effects for SDEs are obtained, are based on the It\^o-Tanaka trick it does not constitute the only technic for that regard (see for instance  \cite{Beck_Flandoli_Gubinelli_Maurelli,catellier2016averaging}).\\\\
\noindent
In this paper we investigate a general framework in which the It\^o-Tanaka trick is valid. Indeed, at this stage, one can point out at least two limitations to Relation (\ref{eq:ItoTanakaSemiMat1}). First, the strong link to the PDE (\ref{eq:intro_PDE}) seems to be bound to the semimartingale realm (where one relates an SDE as a probabilistic counterpart of a parabolic PDE using the It\^o formula). Another limitation is to investigate if Relation (\ref{eq:ItoTanakaSemiMat1}) can be extended to random fields $b$. Note that this step seems somehow mandatory to study the (possible) regularization phenomenon for a class of fluid mechanics equations which takes the form of non-linear transport PDEs (we refer to the comment \cite[page 6]{flandoli2010well} on that question). For instance, counter-examples can be derived in the case where $b$ is random as this extra randomness can cancel the effect of the noise $W$. As an example, consider $b:[0,T] \times \real^d \to \real^d$ a non-smooth deterministic field, and $\tilde b:\Omega\times [0,T] \times \real^d \to \real^d$ defined as : $\tilde b(\omega,t,x):=b(t,x-\sigma W_t(\omega))$, then it is clear that SDE 
$$ dX_t = \tilde b(t,X_t) dt +\sigma dW_t, $$
is equivalent to the deterministic ODE (by setting $x_t:=X_t-\sigma W_t$) : 
$$ dx_t = b(t,x_t) dt.$$
This example enlights the fact that somehow the randomness and space variables $(\omega,x)$ have to be decoupled for a relation of the form (\ref{eq:ItoTanakaSemiMat1}) to be in force. In \cite{Duboscq_Reveillac}, the authors have extended the It\^o-Tanaka trick to that framework, for which the improvement of regularity is obtained if the field $b$ is Malliavin differentiable. In particular, this extra randomness is harmless for the regularity in the space variable for $b$ if $(\omega,x)$ are "decoupled".\\\\    
\noindent
In this paper we revisit the It\^o-Tanaka trick for random fields $b$ and a non-semimartingale driving noise. More specifically, we bound ourselves to the case of a fractional Brownian motion (fBm) noise which allows one to compare our results with for instance the work \cite{catellier2016averaging} in which pathwise uniqueness is proved for SDEs of the form (with $W$ replaced by a fBm) but without using the It\^o-Tanaka trick. Our approach is based on the use of Malliavin calculus arguments allowing one to escape the semimartingale context and to consider random fields $b$. To illustrate our key argument, we provide informal computations in the following particular example : $d=1$, $b:\real \to \real$ (so $b$ is deterministic and does not depend on the time variable). We stress that our main result is valid in any finite dimension and for a time-dependent vector field $b$, which is random (more precisely adapted according to assumptions presented in Section \ref{section:main}). Consider once again the solution $X$ to the SDE (\ref{eq:Intro_SDE}), and let $(P^X_t)_{t\geq0}$ the transition operator associated to it. For any fixed time $t>0$, assuming that the random variable $A_t = b(t,X_t + x)$ is square integrable, one can apply the Clark-Ocone formula (which will be recalled below as Relation (\ref{eq:CO})) to get 
\begin{equation*}
A_t = \mathbb{E}\left[ A_t \middle| \mathcal{F}_0\right] + \int_0^t \mathbb{E}\left[ D_s A_t \middle| \mathcal{F}_s\right] dW_s,
\end{equation*}
where $D$ denotes the Malliavin derivative (which will also be recalled in the next section). Hence, very formally, integrating with respect to $t$, we obtain : 
\begin{align*}
\int_0^T b(t,X_t+x) dt &= \int_0^T P^X_t b(t,X_0 + x) dt + \int_0^T \int_0^t D_s P^X_{t-s} b(t,X_s+x)dW_s dt
\\ &= \int_0^T P^X_t b(t,X_0 + x) dt + \int_0^T \int_0^t \frac{\partial}{\partial x} P^X_{t-s} b(t,X_s+x)dW_s dt
\\ &= \int_0^T P^X_t b(t,X_0 + x) dt + \int_0^T \int_s^T \frac{\partial}{\partial x} P^X_{t-s} b(t,X_s+x) dt dW_s,
\end{align*}
where we have used stochastic Fubini's theorem. This relation exactly  matches with the It\^o-Tanaka trick (\ref{eq:ItoTanakaSemiMat1}) as the mild solution $F$ to the PDE (\ref{eq:intro_PDE}) writes down as : 
\begin{equation}
\label{eq:introDuhamel}
F(s,x) =-\int_s^T P_{t-s} ^X b(t,x) dt, \quad t\in [0,T], \quad x \in \real^d.
\end{equation}

From these simple and very formal computations, one can make several remarks. First, the regularization effect is contained in the form of the solution to the PDE (using the semigroup associated to $X$). Then, this approach seems restricted to the deterministic case, as a measurability issue would prevent one to define the stochastic It\^o integral $\int_0^T \int_s^T \frac{\partial}{\partial x} P^X_{t-s} b(t,X_s+x) dt dW_s,$ even in the case of an adapted random field $b$. This problem has been solved in \cite{Duboscq_Reveillac} where the PDE has to be replaced by a Backward Stochastic PDE whose solution is explicitly given as the predictable projection of the solution to the PDE (\ref{eq:intro_PDE}). However, BSPDEs can only be solved and studied in a semimartingale context. The main idea of this paper is to use the classical representation of a fBm as the It\^o integral of a well-chosen kernel against a standard Brownian motion, and to the apply (several times) the Clark-Ocone formula to a functional of the form (\ref{eq:introDuhamel}). This functional will not be a solution to a PDE (or a BSPDE) which fits with the well-known result according to which the fBm cannot be related to a Markov semi-group, but it somehow plays this role. The several use of the Clark-Ocone formula allows us to precisely take into account the randomness coming from the field $b$ and from the noise. Hence we obtain a generalization of the It\^o-Tanaka trick as Theorem \ref{thm:main}. We apply this result to recover the well-posedness of the fractional SDE associated to $b$ in Theorem \ref{thm:CauchySDE}. \\\\
\noindent Finally, we would like to make a comment on the reference \cite{catellier2016averaging} where the authors prove the well-posedness of the fractional SDE. The proof relies on two ingredients: the study of the Fourier transform of the occupation measure related to $W$ (to be more specific, on the $(\rho,\gamma)$-irregular property of $W$) and the reformulation of the SDE as a Young-type ODE where the time-integral of the drift is reinterpreted as a Young integral. The $(\rho,\gamma)$-irregular property of $W$ provides the regularization effects of $W$ and the authors do not rely on the It\^o-Tanaka trick but on a kind of discrete martingale decomposition and a Hoeffding lemma. We remark that this martingale decomposition possesses some similarities with the Clark-Ocone formula. In Section \ref{section:SDE}, we follow the same reformulation (and the argument to construct the Young integral) to prove the existence and uniqueness of a fractional SDE but we do not prove exactly the $(\rho,\gamma)$-irregular property since we rely on more straightforward strategy in Sobolev spaces (at the cost of an embedding to recover estimates in H\"older spaces).
\\\\
\noindent
We proceed as follows. In the next section we present the main notations. The main result (Theorems \ref{thm:main}) is presented in Section 3. The application to uniqueness of fractional SDEs (with additive noise) with adapted coefficients is presented in Section \ref{section:SDE}. The proof of Theorem \ref{thm:main} is postponed to Section \ref{section:proofmain}.  

\section{Notations and preliminaries}

\subsection{General notations}

Throughout this paper $T$ denotes a positive real number, $\lambda$ stands for the Lebesgue measure and $\mathcal B(E)$ denotes the Borelian $\sigma$-field of a given measurable pace $E$. We set also $\mathbb{N}^*$ the set of integers $n$ with $n\geq 1$.\\\\
For any $x$ in $\real^d$, we denote by $x_k$ the $k$-th coordinate of $x$ that is $x=(x_1,\ldots,x_d)$.\\\\
\noindent
For any $r,\ell \in \mathbb{N}^*$, we denote by $\mathcal{C}^{r}(\real^\ell)$ the set of $r$-times continuously differentiable (real-valued) mappings defined on $\real^d$. We also let $\mathcal{C}_c^{\infty}(\real^\ell)$ the set of infinitely differentiable mappings with compact support.\\\\ 
\noindent
Let $\varphi:\real^d \to \real$ belongs to $\mathcal{C}^{k}(\real^d)$, $\ell\in \mathbb{N}^*$, $n_1,\ldots,n_\ell, p_1,\ldots,p_\ell$ in $\mathbb{N}$ with $\sum_{i=1}^\ell n_i^{k_i} = n$, we denote by $\frac{\partial^k \varphi}{\prod_{i=1}^\ell \partial{{x_i}^{k_i}}}$ the partial derivative of $\varphi$ with respect to the variables $x_i$ with order $k_i$. $\nabla \varphi$ will refer to the gradient of $\varphi$. 
Finally for any $x$ and $h$ in $\real^d$, we write $\nabla^k \varphi(x) \cdot h^k$ the action of the $k$-order differentiable of $\varphi$ (noted $\nabla^k \varphi(x)$) on $h^k:=(h,\ldots,h)$. Finally, we denote by $\Delta$ the Laplacian operator.\\\\
\noindent
For $p,m\in\mathbb{R}$, we set 
\begin{equation*}
W^{m,p}(\mathbb{R}^d) = \left\{ \varphi\in L^p(\mathbb{R}^d);  \mathcal{F}^{-1}\left(([1+|\xi|^2]^{m/2}\hat{\varphi}\right)\in L^p(\mathbb{R}^d)) \right\},
\end{equation*}
the usual Sobolev spaces equipped with its natural norm
\begin{equation*}
\|\varphi\|_{W^{m,p}(\mathbb{R}^d)}:= \left\|\mathcal{F}^{-1}\left(([1+|\xi|^2]^{m/2}\hat{\varphi}\right)\right\|_{L^p(\mathbb{R}^d))},
\end{equation*}
where $\hat{\varphi}(\xi) = \mathcal{F}(\varphi)(\xi)$ and $\mathcal{F}$ (resp. $\mathcal{F}^{-1}$) denotes the Fourier transform (resp. the inverse Fourier transform).\\

\noindent We also make use of the following notation : let $(\mathcal{E},\mathcal{B},\mu)$ be a mesured space and $(G,\|\cdot\|_{G})$ be a Banach space, and $r\geq 0$. We denote by $L^r(\mathcal{E};G)$ the space of measurable mappings $\varphi : \mathcal{E} \to G$ with  
$$ \|\varphi\|_{L^r(\mathcal{E};G)}^{r}:=\int_{\mathcal{E}} \|\varphi(y)\|_{G}^r \; \mu(dy) <+\infty. $$
Depending on the context, the definition of the integral will be made precise.

\subsection{The fractional Brownian motion}

Let $(\Omega,\mathcal{F},\P)$ be a probability space, $d\in \mathbb{N}$ ($d\geq 1$) and $B:=(B_1(s),\ldots,B_d(s))_{s\in (-\infty,T]}$ a standard $\real^d$-valued two-sided Brownian motion (with independent components). We set $\left(\mathcal F_t\right)_{t\in (-\infty,T]}$ the natural (completed and right-continuous) filtration of $B$. We assume for simplicity that $\mathcal{F} = \sigma\left(B(s), \; s\in (-\infty,T]\right)$.\\\\
\noindent  
More generally, for any $\real^d$-valued stochastic process $(X(t))_{t\in (-\infty,T]}$ we will denote by $X^j$ the $j$th component of $X$.\\\\
\noindent
The main object of our analysis will be $d$-dimensional fractional Brownian motion 
$$W^H:=(W_1^{H}(s),\ldots,W_d^{H}(s))_{s\in [0,T]},$$ defined as 
$$ W_j^{H}(s) = \int_{-\infty}^s \left((s-u)_{+}^{H-1/2} - (-u)_+^{H-1/2} \right)dB_{j}(u), \quad s\in [0,T], \quad j\in \{1,\cdots,d\}$$
where $H$ is a given parameter in $(0,1)\setminus \left\{\frac 12\right\}$. A crucial decomposition is on analysis relies on the following split of the fBm $W^H$ as follows : 
\begin{align}
\label{eq:decompositionfBm}
W_j^{H}(s) &= \int_{-\infty}^s \left[(s-u)_{+}^{H-1/2} - (-u)_+^{H-1/2} \right]dB_{j}(u) \nonumber
\\&= \int_{t}^s (s-u)^{H-1/2}dB_{j}(u) + \int_{-\infty}^t \left[(s-u)_{+}^{H-1/2} - (-u)_+^{H-1/2} \right]dB_{j}(u) \nonumber
\\&=: W_j^{1,H}(t,s) + W_j^{2,H}(t,s),
\end{align}
Note that for a given $(s,t)$ with $t<s$, the random variable $W_j^{1,H}(t,s)$ is independent of $\mathcal{F}_t$ whereas the process $W^{2,H}:=(W^{2,H}(t,s))_{t\in[0,s]}$ is $\left(\mathcal{F}_t\right)_{t\in[0,s]}$-adapted. It is worth noting that this decomposition is somehow natural in the context of stochastic regularisation and was already used in \cite{catellier2015rough} as only the component $W^{1,H}$ contributes to the regularising effect we will describe in the next sections. \\\\  
\noindent
We now turn to the notion of (smooth) adapted random field. 

\begin{defi}[(smooth) adapted random field]
\label{definiton:randomfield}
\textit{}\\
\begin{itemize}
\item[(i)] A random field is a $\mathcal F_T\otimes\mathcal{B}(\real^d)$-measurable mapping $\varphi:\Omega\times\real^d \to \real$.
\item[(ii)] An adapted random field is a $\mathcal F_T\otimes \mathcal{B}([0,T])\otimes\mathcal{B}(\real^d)$-measurable mapping $\varphi:\Omega\times[0,T]\times\real^d \to \real$ such that for any $x$ in $\real^d$, $\varphi(\cdot,x)$ is $\left(\mathcal{F}_t\right)_{t\in[0,T]}$-adapted.
\item[(iii)] A smooth adapted random field is an adapted random field $\varphi$ such that $x\mapsto \varphi(\omega,t)$ is infinitely continuously differentiable with bounded derivatives of any order for $\lambda\otimes\P$-a.e. $(\omega,t)$ in $[0,T]\times \Omega$.
\end{itemize}
\end{defi}

We denote by $P:=(P_t)_{t\in[0,T]}$ the Heat semigroup. For simplicity,  we will use throughout this paper, the following notation for the conditional expectation.  

\begin{notations}
\label{notations:main1}
For $t$ in $[0,T]$, we set $\E_t[\cdot] := \E[\cdot \vert \mathcal F_t]$.\\
\end{notations}

\subsection{Malliavin-Sobolev spaces}

In this section, we introduce the main notations about the Malliavin calculus for random fields.

\begin{defi}
\label{definition:cylind}
\begin{itemize}
\item[(i)] Consider $\mathcal{S}_{r.v.}$ be the set of cylindrical random variables, that is the set of random fields $F:\Omega \times \real^d \to \real$ such that there exist : 
$$ n \in \mathbb{N}^*, \; 0\leq \gamma_1<\gamma_2<\cdots<\gamma_n\leq T, \quad \varphi:\real^n\times\real^d \to \real \in \mathcal{C}_c^\infty(\real^{n+d})$$
such that 
\begin{equation}
\label{eq:cylind}
F(\omega,x)=\varphi(B(\gamma_1),\cdots,B(\gamma_n),x), \quad \omega \in \Omega, \; x\in \real^d.
\end{equation}
\item[(ii)] The set of cylindrical random fields denoted by $\mathcal S$, consists of random fields $F:[0,T] \times \Omega \times \real^d \to \real$ such that there exist : 
$$ n \in \mathbb{N}^*, \; 0\leq \gamma_1<\gamma_2<\cdots<\gamma_n\leq T, \quad \varphi:[0,T]\times (\real^d)^n\times\real^d \to \real $$
such that 
\begin{equation}
\label{eq:cylindP}
F(\omega,t,x)=\varphi(t,B(\gamma_1),\cdots,B(\gamma_n),x), \quad \omega \in \Omega, \; x\in \real^d, \quad t\in [0,T],
\end{equation}
where $\varphi(t,\cdot) \in \mathcal{C}_c^\infty\left((\real^d)^n\times\real^d\right), \; \forall t \in [0,T], $
and 
$$ \sup_{t\in[0,T]} \left(\|\varphi(t,\cdot)\|_\infty+\|\mathcal{L} \varphi(t,\cdot)\|_\infty\right) <+\infty $$
with $\mathcal{L}$ any partial derivative of any order.
\item[(iii)] The set of adapted cylindrical random fields denoted by $\mathcal{S}_{ad}$, consists of adapted random field is a random field $F:[0,T] \times \Omega \times \real^d \to \real$ such that there exist : 
$$ n \in \mathbb{N}^*, \; 0\leq \gamma_1<\gamma_2<\cdots<\gamma_n\leq T, \quad \varphi:[0,T]\times (\real^d)^n\times\real^d \to \real $$
such that 
\begin{equation}
\label{eq:cylindPad}
F(\omega,t,x)=\varphi(t,B(\gamma_1\wedge t),\cdots,B(\gamma_n\wedge t),x), \quad \omega \in \Omega, \; x\in \real^d, \quad t\in [0,T],
\end{equation}
where $\varphi(t,\cdot) \in \mathcal{C}_c^\infty\left((\real^d)^n\times\real^d\right), \; \forall t \in [0,T], $
and 
$$ \sup_{t\in[0,T]} \left(\|\varphi(t,\cdot)\|_\infty+\|\mathcal{L} \varphi(t,\cdot)\|_\infty\right) <+\infty $$
with $\mathcal{L}$ any partial derivative of any order.
\end{itemize}
\vspace{1em}
Obviously, $\mathcal{S}_{r.v.} \subset \mathcal{S}$ and $\mathcal{S}_{ad} \subset \mathcal{S}$.
\end{defi}

We now define the Malliavin derivative of any adapted random field $F$ in $\mathcal S$.

\begin{defi}
\label{definition:Malliavin}
Let $F$ in $\mathcal S$ with representation (\ref{eq:cylindP}). Then, we define the Malliavin gradient $DF$ of $F$ as follows : 
$$ DF : [0,T]\times\Omega\times \real^d \to L^p([0,T];\real^d) $$
with for any $j$ in $\{1,\cdots,d\}$,
$$ \left(D_jF(t,\omega,x)\right)(u):=\sum_{i=1}^n \frac{\partial \varphi}{\partial y_i}(B(\gamma_1)(\omega),\cdots,B(\gamma_n)(\omega),x) \textbf{1}_{[0,\gamma_i]}(u), \quad u\in [0,T], \; \omega\in \Omega, x\in \real^d. $$  
\end{defi}

We can now define Malliavin-Sobolev spaces associated to the Malliavin and the spatial derivatives for random fields. 

\begin{defi}
\label{defi:Malliavin-Sobolevspaces}
Set $m\in\mathbb{R}$.
\begin{itemize}
\item[(i)] We set $\D^{1,m,p}$ the closure of $\mathcal{S}_{r.v.}$ with respect to the seminorm $\|\cdot\|_{\D^{1,m,p}}$ with
\begin{equation}\label{eq:NormD}
 \|F\|_{\D^{1,m,p}}^p:= \E[\|F\|_{W^{m,p}}^p] + \int_0^T \E\left[\|D_\theta F\|_{W^{m,p}(\mathbb{R}^d)}^p\right] d\theta.
\end{equation}
\item[(ii)] We set $\D_p^{1,m,p}$ the closure of $\mathcal{S}$ with respect to the seminorm $\|\cdot\|_{\D_p^{1,m,p}}$ with
\begin{equation}\label{eq:NormDq}
 \|F\|_{\D_p^{1,m,p}}^p:= \int_0^T \|F(t,\cdot)\|_{{\D^{1,m,p}}}^p dt.
\end{equation}
\end{itemize}
\end{defi} 

This definition, requires some justifications. Indeed, note that $D \nabla^k F = \nabla^k D F $ for $F$ in $\mathcal{S}_{r.v.}$. In addition, as proved in \cite[Lemma Appendix A.1 and Lemma Appendix A.2]{Duboscq_Reveillac}, the operators $D \nabla^k $ (and so $\nabla^k D$) are closable from $\mathcal{S}$ to $L^p([0,T]\times \Omega \times \real^d; \real^d)$.

\begin{remark}
By definition $$\mathcal{S}_{ad} \subset {\D_p^{1,m,p}}, \quad \forall m\geq 0, \; p\geq 2 $$
\end{remark}

\noindent
We conclude this section with two properties of the Malliavin derivative. 

\begin{lemma}
\label{lemma:propMalliavin}
\begin{itemize}
\item[(i)] (Chain rule). Let $F$ be in $\mathcal{S}$ and $G$ be in $\mathcal{S}_{r.v.}$. Then, for any $t$ in $[0,T]$, $F(t,G)$ belongs to $\mathcal{S}_{r.v.}$ and :
$$ (D_j F(t,G))(u) = (D_j F(t,x))(u))_{x=G} + \frac{\partial F}{\partial x_j}(t,G) \times (D_j G)(u), \quad j \in \{1,\cdots,d\}, \; u\in [0,T].$$
\item[(ii)] Let $m$ a real number, $p\geq 2$, $t$ in $[0,T]$ and $G$ be in $\D^{1,m,p}$. If $G$ is $\mathcal{F}_t$-measurable, then for any $j \in \{1,\cdots,d\}$ :
$$ ((D_j G)(s))_{s>t} = 0, \quad \textrm{where the equality is understood in } L^2(\Omega\times(t,T])). $$ 
\end{itemize}
\end{lemma}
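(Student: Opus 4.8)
The plan is to establish both assertions first for cylindrical elements, where everything is an explicit finite-dimensional computation, and then extend by closability/density. For part (i), the chain rule, start from representations: write $F(t,x)=\varphi(t,B(\gamma_1),\dots,B(\gamma_n),x)$ as in (\ref{eq:cylindP}) and $G(\omega)=\psi(B(\sigma_1),\dots,B(\sigma_m))$ with $\psi\in\mathcal C_c^\infty((\real^d)^m)$ as in (\ref{eq:cylind}). Merging the two finite collections of times $\{\gamma_i\}\cup\{\sigma_k\}$ into a common ordered set, one sees that $F(t,G)=\Phi(t,B(\tau_1),\dots,B(\tau_N))$ for a suitable $\Phi$; since $\varphi(t,\cdot)\in\mathcal C_c^\infty$ and $\psi\in\mathcal C_c^\infty$, the composition $\Phi(t,\cdot)$ is $\mathcal C_c^\infty$ (compact support is preserved because the outer function has compact support in the $x$-slot), and the uniform-in-$t$ bounds on $\varphi$ and its derivatives carry over to $\Phi$. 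Hence $F(t,G)\in\mathcal S_{r.v.}$. The identity for $D_j$ is then just the classical chain rule for Malliavin derivatives of smooth functionals combined with the usual calculus chain rule for the $x$-dependence: differentiating $\Phi$ in the direction of the $B$-increments produces two groups of terms, those where the derivative falls on an argument $B(\gamma_i)$ of $\varphi$ (giving $(D_jF(t,x))(u)\big|_{x=G}$ after recognising the indicator structure $\mathbf 1_{[0,\gamma_i]}(u)$ from Definition \ref{definition:Malliavin}) and those where it falls on an argument $B(\sigma_k)$ of $\psi$ inside the $x$-slot (giving $\frac{\partial F}{\partial x_j}(t,G)\,(D_jG)(u)$ by the calculus chain rule). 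Summing yields the stated formula; it suffices to check it pointwise in $u$ for the elementary blocks $\mathbf 1_{[0,\gamma_i]}$.

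For part (ii), I would argue first on $\mathcal S_{r.v.}$ and then pass to the $\D^{1,m,p}$-closure. If $G$ is a cylindrical random variable that is $\mathcal F_t$-measurable, one may, after a density step, assume $G=\psi(B(\sigma_1),\dots,B(\sigma_m))$ with all $\sigma_k\le t$ (the completed $\sigma$-algebra $\mathcal F_t$ is generated by the increments up to time $t$, and cylindrical functionals of those are dense among $\mathcal F_t$-measurable elements of $\D^{1,m,p}$). Then by Definition \ref{definition:Malliavin}, $(D_jG)(s)=\sum_{k}\frac{\partial\psi}{\partial y_k}(B(\sigma_1),\dots,B(\sigma_m))\,\mathbf 1_{[0,\sigma_k]}(s)$, and each indicator $\mathbf 1_{[0,\sigma_k]}(s)$ vanishes for $s>t\ge\sigma_k$; hence $(D_jG)(s)=0$ for $s>t$. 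For a general $\mathcal F_t$-measurable $G\in\D^{1,m,p}$, take a sequence $G_N\in\mathcal S_{r.v.}$ with $G_N\to G$ in $\D^{1,m,p}$; one can arrange $G_N$ to be $\mathcal F_t$-measurable (e.g. by conditioning, or by choosing the approximating cylindrical functionals to depend only on increments up to $t$). Then $D_jG_N\to D_jG$ in $L^p([0,T]\times\Omega;\real^d)$, and in particular in $L^2(\Omega\times(t,T])$; since each $(D_jG_N)(s)=0$ for $s>t$, the limit is zero there as well.

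The main obstacle I anticipate is the density/approximation bookkeeping in part (ii), namely justifying that an arbitrary $\mathcal F_t$-measurable element of $\D^{1,m,p}$ can be approximated in the graph norm (\ref{eq:NormD}) by cylindrical functionals depending only on $B$ up to time $t$. One clean way is to invoke the commutation of the Malliavin derivative with conditional expectation, $D_s\,\E_t[F]=\E_t[D_sF]\,\mathbf 1_{[0,t]}(s)$, which is standard on $\mathcal S_{r.v.}$ and extends by closability; applying it to the approximants $G_N$ of a general $G$ and using $\E_t[G]=G$ yields the $\mathcal F_t$-measurable approximating sequence with the required convergence. A minor subtlety is that $W^{m,p}$-valued closability is needed rather than the scalar case, but this is precisely what \cite[Lemma Appendix A.1 and Lemma Appendix A.2]{Duboscq_Reveillac} supply, so no new analysis is required; the argument is otherwise routine. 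In part (i) the only point demanding a little care is verifying that $\Phi(t,\cdot)$ has compact support uniformly in $t$ so that $F(t,G)$ genuinely lands in $\mathcal S_{r.v.}$ and not merely in a larger class — this follows from $\varphi(t,\cdot)\in\mathcal C_c^\infty((\real^d)^n\times\real^d)$ having compact support in all its arguments including the $x$-slot into which $G$ is substituted.
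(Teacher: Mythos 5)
The paper states Lemma \ref{lemma:propMalliavin} without any proof, treating it as standard Malliavin-calculus material, so there is no in-paper argument to compare against; your route (explicit computation on cylindrical functionals, then extension by closability and density) is the canonical one and is evidently what the authors have in mind. Part (ii) is correct as written: for $G=\psi(B(\sigma_1),\dots,B(\sigma_m))$ with all $\sigma_k\le t$ the indicators $\mathbf 1_{[0,\sigma_k]}(s)$ kill the derivative for $s>t$, and the passage to a general $\mathcal F_t$-measurable $G\in\D^{1,m,p}$ via the commutation $D_s\E_t[\cdot]=\E_t[D_s\cdot]\mathbf 1_{[0,t]}(s)$ applied to approximants is the standard way to handle the density bookkeeping you correctly identified as the main issue.

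The one genuine flaw is your justification that $F(t,G)\in\mathcal S_{r.v.}$: composition does \emph{not} preserve compact support. Writing $\Phi(t,b,b')=\varphi(t,b,\psi(b'))$, for $b'$ outside the support of $\psi$ one has $\Phi(t,b,b')=\varphi(t,b,0)$, which need not vanish; so $\Phi(t,\cdot)$ is smooth with bounded derivatives of all orders but in general not in $\mathcal C_c^\infty$. Your stated reason --- that $\varphi$ has compact support in the $x$-slot into which $G$ is substituted --- does not work, because the vanishing of $\varphi$ for $\psi(b')$ outside a compact set does not force $b'$ into a compact set, precisely since $\psi$ is itself bounded. This looseness is arguably already present in the statement of the lemma, and it does not affect the chain-rule identity, which your merging-of-time-points computation does establish; but to land literally in $\mathcal S_{r.v.}$ one must either multiply $\Phi$ by cutoffs and pass to the limit using the closability of $D\nabla^k$ recalled after Definition \ref{defi:Malliavin-Sobolevspaces}, or observe that all the uniform bounds defining the cylindrical classes (boundedness of $\Phi(t,\cdot)$ and of every partial derivative, uniformly in $t$) survive the composition, which is what is actually used downstream in the proof of Theorem \ref{thm:main}.
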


\subsection{Clark-Ocone formula}

Let $\mathcal{S}_{r.v.}$ be the set of random variables of the form $F=\varphi(B(t_1),\cdots,B(t_n))$ in $\mathcal{S}$ (that is that do not depend on the $x$-variable). We start with the following lemma whose proof can be found for instance in \cite{Nualartbook,Privault_LectureNotes}.

\begin{lemma}
\label{lemmaExten}
The operator 
$$
\left\lbrace
\begin{array}{lll}
\Dp : &\mathcal{S}_{r.v.} &\to L^2([0,T]\times \Omega; \real^d)\\
&F &\mapsto (\E_s[(DF)(s)])_{s\in[0,T]}, 
\end{array}
\right.
$$
is continuous with respect to the $L^2(\Omega)$-norm. In particular in extends to $L^2(\Omega)$.
\end{lemma}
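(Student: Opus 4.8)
The statement to prove is Lemma \ref{lemmaExten}: the operator $D^{\mathcal{P}}\colon F \mapsto (\E_s[(DF)(s)])_{s\in[0,T]}$, defined on cylindrical random variables $\mathcal{S}_{r.v.}$, is continuous for the $L^2(\Omega)$-norm on the source and the $L^2([0,T]\times\Omega;\real^d)$-norm on the target, hence extends continuously to all of $L^2(\Omega)$. The natural route is the Clark–Ocone representation itself: for $F\in\mathcal{S}_{r.v.}$ one has the identity
\begin{equation*}
F = \E[F] + \sum_{j=1}^d\int_0^T \E_s\big[(D_jF)(s)\big]\, dB_j(s),
\end{equation*}
which holds on the cylindrical class by a direct computation (either via the Itô representation theorem combined with the fact that the integrand of the martingale $\E_s[F]$ is exactly $\E_s[DF(s)]$, or by expanding $F$ in Wiener chaos and using that $D$ acts diagonally on chaoses while conditional expectation truncates them). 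I would first record this identity on $\mathcal{S}_{r.v.}$.

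The key step is then the Itô isometry applied to the stochastic integral term. Since the components of $B$ are independent, the isometry gives
\begin{equation*}
\E\Big[\big(F-\E[F]\big)^2\Big] = \sum_{j=1}^d \int_0^T \E\Big[\big(\E_s[(D_jF)(s)]\big)^2\Big]\, ds = \big\|D^{\mathcal{P}}F\big\|_{L^2([0,T]\times\Omega;\real^d)}^2.
\end{equation*}
Hence $\|D^{\mathcal{P}}F\|_{L^2([0,T]\times\Omega;\real^d)} = \|F-\E[F]\|_{L^2(\Omega)} \leq \|F\|_{L^2(\Omega)}$, which is precisely the asserted continuity with constant $1$ (even an isometry up to the mean). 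Continuity being established on the dense subspace $\mathcal{S}_{r.v.}\subset L^2(\Omega)$, the bounded linear extension theorem gives a unique continuous extension to $L^2(\Omega)$, and one checks that on this extension the Clark–Ocone identity persists by passing to the limit (the stochastic integral is itself an $L^2$-isometry so both sides converge).

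The main obstacle — really the only one requiring care — is justifying the Clark–Ocone identity on $\mathcal{S}_{r.v.}$ rigorously, in particular that the predictable integrand of the Brownian martingale $s\mapsto\E_s[F]$ coincides $ds\otimes d\P$-a.e. with $\E_s[(DF)(s)]$. For a cylindrical $F=\varphi(B(t_1),\dots,B(t_n))$ this is a finite-dimensional computation: conditioning on $\mathcal{F}_s$ replaces the increments after $s$ by their Gaussian expectations, and differentiating the resulting smooth function of the pre-$s$ increments under the integral sign reproduces $\E_s[D_jF(s)]$; alternatively one invokes the chaos-expansion argument. Since the excerpt defers this to \cite{Nualartbook,Privault_LectureNotes}, I would simply cite it and concentrate on the isometry bound, which is the quantitative content of the lemma.
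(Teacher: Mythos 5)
Your proof is correct and follows exactly the standard argument that the paper itself delegates to \cite{Nualartbook,Privault_LectureNotes}: Clark--Ocone on the cylindrical class plus the It\^o isometry yields $\|\Dp F\|_{L^2([0,T]\times\Omega;\real^d)}=\|F-\E_0[F]\|_{L^2(\Omega)}\leq \|F\|_{L^2(\Omega)}$, and density of $\mathcal{S}_{r.v.}$ in $L^2(\Omega)$ gives the extension. The only cosmetic point is that, since the filtration here is generated by a two-sided Brownian motion on $(-\infty,T]$, the conditioning term is $\E_0[F]$ rather than $\E[F]$; for $F\in\mathcal{S}_{r.v.}$ these coincide and the bound is unaffected.
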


Consider $F:\Omega \to \real$ a random variable with $\E[|F|^2]<+\infty$. Then for any $t$ in $[0,T]$,
\begin{equation}
\label{eq:CO}
F =\E_t[F] + \sum_{j=1}^d \int_t^T \E_s[(D_j F)(s)] dB_j(s)
\end{equation}

$$ \left(\Dp F\right)(u) := \E_u[(DF)(u)] $$ 

Note that by Lemma \ref{lemmaExten}, the operator $(\E_s[D_s F])_s$ is well-defined even though $F$ is not Malliavin differentiable. 

%\b{Je crois que l'on a plus besoin de ca : \\
%Let $G$ be a $\mathcal{F}_u$-measurable random variable, $r\leq u\leq s$, $p>1$.
%\begin{align*}
%&\E\left[\left|\E_u[(P_{\frac{1}{2H}(s-r)^{2H}}  D_j \mathcal{L}  f(s,x))(u)]_{x=G}\right|^p\right]\\
%&\leq \E\left[\left\| D_j \mathcal{L}  f(s,\cdot))(u)\right\|_\infty^p\right]\\
%&\leq \sup_{0\leq u\leq s \leq T} \E\left[\left\| D_j \mathcal{L}  f(s,\cdot))(u)\right\|_\infty^p\right].
%\end{align*}
%}

\section{Main result}
\label{section:main}

\begin{assumption}
\label{assumption:main}
Let $m\in \mathbb{R}$, $p\geq 2$ and $\alpha$ in $\real$. An adapted random field $f:[0,T]\times\Omega\times\real^d \to \real$ is said to enjoy Assumption \ref{assumption:main} if : 
$$
1/2-H\alpha-1/p>0 \quad \textrm{ and } \quad f \in \mathbb{D}^{1,m-\alpha,p}_p.
$$ 
\end{assumption}

\noindent
We set : 
\begin{notations}
\label{notations:main}
\begin{itemize}
\item[(i)] Given an adapted smooth random field $f$, we set for $0\leq t\leq u \leq s\leq T,\; x\in \real^d, \; j\in \{1,\cdots,d\}$ :
\begin{equation}
\label{eq:notationproof}
f^a(s,t,x) = \E_{t}\left[f(s,x)\right] \quad \textrm{ and } \quad g_j(s,u,x) =(\Dp f(s,x))_j (u) =  \E_u\left[(D_j f(s,x))(u)\right]
\end{equation}
\item[(ii)] For fixed $x$ in $\real^d$, let
\begin{equation}
\label{eq:definitionF}
F(t) := \int_t^T P_{\frac{1}{2H}(s-t)^{2H}}f(s,W^{2,H}(t,s)+x)ds, \quad t\in [0,T],
\end{equation}
\begin{equation}
\label{eq:definitionFa}
F^a(t) := \E_t\left[\int_t^T P_{\frac{1}{2H}(s-t)^{2H}}f(s,W^{2,H}(t,s)+x)ds\right], \quad t\in [0,T]
\end{equation}
\end{itemize}
\end{notations}

With these notations at hand we can state a non-semimartingale counterpart of the It\^o-Tanaka-Wentzell trick for as:
\begin{theorem}\label{thm:main}
Let $f:[0,T]\times\Omega\times\real^d \to \real$ be an adapted random field and $(p,m,\alpha)$ such that Assumption \ref{assumption:main} is in force.Then, $\forall t\in[0,T]$, we have
\begin{align}
\label{eq:mainformula}
\int_0^t f(r,W^H_r+x)dr =&  \int_0^t P_{\frac{1}{2H}r^{2H}} f(r,W^{H}(r)+x)dr\nonumber
\\ & +\sum_{j=1}^d \int_0^t \int_u^t P_{\frac{1}{2H}(r-u)^{2H}} \frac{\partial}{\partial x_j} f^a(r,u,W^{2,H}(u,r)+x)(r-u)^{H-1/2}dr dB_j(u)\nonumber
\\ & +\sum_{j=1}^d \int_0^t  \int_{u}^t P_{\frac{1}{2H}(r-u)^{2H}} \frac{\partial}{\partial x_j} g_j(r,u,W^{2,H}(u,r)+x)(r-u)^{H-1/2}drdu\nonumber
\\& - \sum_{j=1}^d \int_0^t  \int_{u}^tP_{\frac{1}{2H}(r-u)^{2H}} g_j(r,u,W^{2,H}(u,r)+x)(r-u)^{H-1/2}dr dB_j(u),
\end{align}
where the equality holds in $L^\infty([0,T];L^p(\Omega; W^{m,p}(\real^d)))$.
\end{theorem}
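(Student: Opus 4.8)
The plan is to prove Theorem \ref{thm:main} by a density argument: first establish the formula for smooth adapted random fields $f$ (i.e.\ $f\in\mathcal{S}_{ad}$, so all manipulations are legitimate), and then pass to the limit using Assumption \ref{assumption:main} and the $\mathbb{D}^{1,m-\alpha,p}_p$-closability results. For the smooth case, the key is to recognize the left-hand side through the fBm decomposition \eqref{eq:decompositionfBm}: writing $W^H_r = W^{1,H}(u,r) + W^{2,H}(u,r)$ and noting that conditionally on $\mathcal{F}_u$ the increment $W^{1,H}(u,r)=\int_u^r(r-v)^{H-1/2}dB(v)$ is an independent centered Gaussian with variance $\frac{1}{2H}(r-u)^{2H}$, one gets the heat-semigroup identity
$$\E_u\big[f(r,W^H_r+x)\big] = P_{\frac{1}{2H}(r-u)^{2H}} f\big(r,\cdot\big)\big(W^{2,H}(u,r)+x\big) \quad\text{(for the non-random part)},$$
up to the Malliavin correction coming from the fact that $f$ itself carries randomness through $\mathcal{F}_T$. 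This is exactly why $f^a$ and $g_j$ appear: $f^a(r,u,\cdot)=\E_u[f(r,\cdot)]$ freezes the drift randomness up to time $u$, while $g_j(r,u,\cdot)$ is the Clark-Ocone correction density.

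Concretely, I would fix $r\in[0,t]$ and apply the Clark-Ocone formula \eqref{eq:CO} to the random variable $A_r := f(r,W^H_r+x)$ at base time $0$: $A_r = \E_0[A_r] + \sum_j\int_0^r \E_s[(D_j A_r)(s)]\,dB_j(s)$. The Malliavin derivative $(D_j A_r)(s)$ splits by the chain rule (Lemma \ref{lemma:propMalliavin}(i)) into (a) the term where $D$ hits the field $f$ itself, producing $g_j$, and (b) the term $\frac{\partial}{\partial x_j}f(r,W^H_r+x)\cdot (D_j W^H_r)(s)$ where $(D_j W^H_r)(s) = (r-s)_+^{H-1/2}-(-s)_+^{H-1/2}$ — but for $s\in[0,r]$ only the kernel $(r-s)^{H-1/2}$ from the $W^{1,H}$-part survives (the $W^{2,H}$-part is $\mathcal{F}_s$-measurable, so Lemma \ref{lemma:propMalliavin}(ii) kills its contribution to $D_s$ only when relevant; more carefully one must track which pieces are $\mathcal F_s$-adapted). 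Then one takes $\E_s[\cdot]$: conditionally on $\mathcal{F}_s$ the remaining $W^{1,H}(s,r)$-randomness integrates against the heat kernel, converting $\E_s[\frac{\partial}{\partial x_j}f(r,W^H_r+x)]$ into $P_{\frac{1}{2H}(r-s)^{2H}}\frac{\partial}{\partial x_j}f^a(r,s,W^{2,H}(s,r)+x)$ (with the extra $\E_s$ of $f$ producing $f^a$), and likewise for the $g_j$ term. Integrating in $r$ over $[0,t]$ and applying the stochastic Fubini theorem to swap $\int_0^t\!\int_0^r dB_j(s)\,dr = \int_0^t\!\int_s^t dr\,dB_j(s)$ yields precisely the four terms of \eqref{eq:mainformula} — the first is $\E_0$ applied inside, which at $s=0$ and with $W^{2,H}(0,r)$ absorbed gives $P_{\frac{1}{2H}r^{2H}}f(r,W^H(r)+x)$ (noting $W^{2,H}(0,r)+W^{1,H}(0,r)$ recombines with the heat-averaging), the second and fourth are the $dB_j$-martingale terms, and the third is the Lebesgue $du$-term coming from the $g_j$ piece (which is not purely a martingale because $g_j$ itself is only adapted, not a true derivative increment).

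The main obstacle I expect is twofold. Analytically, the delicate point is the justification of the stochastic Fubini theorem and of all the integrability/convergence estimates: one must show each of the four integrands lies in the right space so that the identity holds in $L^\infty([0,T];L^p(\Omega;W^{m,p}(\real^d)))$, and this is exactly where the Sobolev regularization comes in — the heat semigroup $P_{\frac{1}{2H}(r-u)^{2H}}$ acting with a gain of derivatives, combined with the singular factor $(r-u)^{H-1/2}$, must be integrable near $r=u$, which forces precisely the condition $1/2 - H\alpha - 1/p > 0$ of Assumption \ref{assumption:main} (the gain $\alpha$ derivatives from $P_t$ costs $t^{-H\alpha}=(r-u)^{-2H\alpha\cdot(1/2)}$... matched against $(r-u)^{H-1/2}$ and an $L^p(du)$ integration). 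Conceptually, the bookkeeping of \emph{which} randomness is being conditioned at \emph{which} time is the crux: one has to iterate/organize the Clark-Ocone applications so that the $\mathcal F_u$-measurable backbone $W^{2,H}(u,r)$ is cleanly separated from the independent Gaussian $W^{1,H}(u,r)$ at every step, using Lemma \ref{lemma:propMalliavin}(ii) repeatedly to discard derivative contributions of $\mathcal F_s$-measurable quantities. Once the smooth case is done with all constants uniform in the $\mathbb{D}^{1,m-\alpha,p}_p$-norm of $f$, the general case follows by approximating $f$ by elements of $\mathcal S_{ad}$ and invoking closability of $D\nabla^k$ from \cite{Duboscq_Reveillac}, together with continuity of $t\mapsto$ (each term) in the stated Banach space.
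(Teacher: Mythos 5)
Your overall architecture (prove the identity for $f\in\mathcal S_{ad}$, then extend by density using boundedness of the right-hand side on $\mathbb{D}^{1,m-\alpha,p}_p$) matches Step~2 of the paper, and the ingredients you list (Clark--Ocone, the chain rule of Lemma \ref{lemma:propMalliavin}, the decomposition (\ref{eq:decompositionfBm}), stochastic Fubini) are the right ones. But the core of your smooth-case argument is the heuristic from the introduction, and it does not go through for a \emph{random} field $f$. The identity you rely on, $\E_u\big[\partial_{x_j}f(r,W^H_r+x)\big]=P_{\frac{1}{2H}(r-u)^{2H}}\partial_{x_j}f^a(r,u,W^{2,H}(u,r)+x)$, is false in general: $f(r,\cdot)$ and the Gaussian increment $W^{1,H}(u,r)=\int_u^r(r-v)^{H-1/2}dB(v)$ both depend on the increments of $B$ on $(u,r]$, so they are correlated conditionally on $\mathcal F_u$ and the conditional expectation does not factor into ``heat kernel applied to $\E_u[f]$''. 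You flag this as ``the Malliavin correction'' but never compute it; that correction is precisely the content of the theorem.

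There is also a structural obstruction. A single application of Clark--Ocone to $A_r=f(r,W^H_r+x)$ at base time $0$, followed by stochastic Fubini, produces an $\mathcal F_0$-measurable term plus $dB_j$-martingale terms, and nothing else. It therefore cannot yield the first term of (\ref{eq:mainformula}) in the stated form (which is genuinely random and adapted --- your remark that $W^{1,H}(0,r)$ ``recombines with the heat-averaging'' to give $P_{\frac{1}{2H}r^{2H}}f(r,W^H(r)+x)$ is not a valid step, since the left side of that identification is $\mathcal F_0$-measurable and the right side is not), and it cannot yield the third term, which is a Lebesgue $dr\,du$ double integral that is neither $\mathcal F_0$-measurable nor a martingale; your explanation that it appears ``because $g_j$ is only adapted'' is not an argument. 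In the paper that term arises from a genuinely second-order mechanism: one works with the auxiliary functional $F(t)=\int_t^TP_{\frac{1}{2H}(s-t)^{2H}}f(s,W^{2,H}(t,s)+x)ds$ of (\ref{eq:definitionF}), telescopes its increments over a partition, Taylor-expands to third order in $\delta_{i,s}(W^{2,H})$, applies Clark--Ocone to each Taylor coefficient, and then the term $I_{1,4}$ --- a product of the stochastic integral $\int_{t_i}^{t_{i+1}}\partial_{x_k}P_{\frac{1}{2H}(s-t_i)^{2H}}g_j\,dB_j(u)$ with the increment $\delta_{k,i,s}(W^{2,H})=\int_{t_i}^{t_{i+1}}(s-u)^{H-1/2}dB_k(u)$ --- generates the $du$-term through the It\^o integration-by-parts bracket $\mathbf{1}_{j=k}\int_{t_i}^{t_{i+1}}\alpha_{i,j,k,s}(u)(s-u)^{H-1/2}du$ of (\ref{eq:temp2termI14}). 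This quadratic-covariation effect is invisible to your first-order computation, so the proposal as written cannot produce formula (\ref{eq:mainformula}); you would need to replace the single Clark--Ocone step by the discretisation/telescoping scheme (or an equivalent second-order argument) to close the gap.
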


\begin{remark}\label{rem:MallDiv}
Note that the second term in the right-hand side of Formula (\ref{eq:mainformula}) rewrites as :
$$ \int_s^t \int_u^t P_{\frac{1}{2H}(r-u)^{2H}} \nabla f^a(r,u,W^{2,H}(u,r)+x)(r-u)^{H-1/2}dr \cdot dB(u),$$
whereas the third term is some sort of divergence term with respect to both the Malliavin derivative and the usual spatial derivative. More precisely, if we define $\textrm{div}^{(\omega,x)}$ this joint divergence operator (applied to a random field $F:\Omega\times \real^d$) as : 
$$ \left(\textrm{div}^{(\omega,x)} F\right)(u):= \sum_{j=1}^d \frac{\partial}{\partial x_j} \E_{u}\left[D_j(F(\cdot,x))(u)\right],$$
then the third term rewrites as 
$$ \int_s^t  \int_{u}^t (r-u)^{H-1/2} P_{\frac{1}{2H}(r-u)^{2H}} \left(\textrm{div}^{(\omega,x)} f(r,y)\right)(u)_{|y=W^{2,H}(u,r)+x}drdu $$
\end{remark}

We postpone the proof of this result to Section \ref{section:proofmain}.

\section{Application to fractional SDEs}
\label{section:SDE}

In this section, we use Theorem \ref{thm:main} to obtain new results concerning the existence and uniqueness of SDEs with singular drifts and additive fractional Brownian motions. Our result applies in fact to a reformulation of such SDEs as Young ODEs and we state some key results around these equations.

\subsection{Main result}

We consider the following SDE
\begin{equation}\label{eq:EDS}
X_t = X_0 + \int_0^t b(s,X_s) ds + W^H_s,
\end{equation}
where $b\,:\; \Omega\times\mathbb{R}^+\times \mathbb{R}^d\to \mathbb{R}^d$ is an adapted (generalized) function and $(W^H_t)_{t\geq 0}$ a fractional Brownian motion of Hurst index $H\in (0,1)$. By making the following change of variable
\begin{equation*}
Y_t : = X_t - W^H_t,
\end{equation*}
and setting, $\forall (u,x)\in\mathbb{R}^+\times\mathbb{R}^d$,
\begin{equation}\label{eq:defA}
A_{u}(x) = \int_0^u b(s,x + W^H_s) ds,
\end{equation}
we can relate \eqref{eq:EDS} to the following Young type ODE
\begin{equation}\label{eq:EDSbis}
Y_t = Y_0 + \int_0^t A_{ds}(Y_s),
\end{equation}
where the integral is understood as a nonlinear generalization of the Young integral, $\forall Z\in\mathcal{C}^{\gamma}([0,T];\mathbb{R}^d)$,
\begin{equation*}
\int_0^t A_{ds}(Z_s) = \lim_{|\Pi_{[0,t]}|\to 0} \sum_{[u,v]\in \Pi_{[0,t]}} \delta A_{u,v}(Z_u),
\end{equation*}
with
\begin{equation*}
\delta A_{u,v}(x) := A_{v}(x) - A_{u}(x),
\end{equation*}
and $\Pi_{[0,t]}$ denoting a discretization of $[0,t]$. Before stating our result, we need the following "chain rule" assumption on the Malliavin derivative of $b$.

\begin{assumption}
\label{asm:chainrule}
Let $\ell \geq 2$, $q,p\in[1,+\infty]$, $k\in\mathbb{R}$, $\iota\in [0,1]$ and $\sigma,\bar{\sigma}\in[\ell,\infty]$ such that 
\begin{equation}
\frac1{H}\left(\frac12 - \frac2{\ell}-\frac1{q}\right)+ k-1-\frac dp >0 \quad\textrm{and}\quad \frac1{\sigma} + \frac1{\bar{\sigma}} =\frac1{\ell}.
\end{equation}
We assume that $b$ is an adapted function which belongs to $L^\ell(\Omega; L^q([0,T];W^{k,p}(\mathbb{R}^d)))$ and that:
\begin{enumerate}
\item[i)] there exist a function a function $b'\in L^{\sigma}(\Omega ; L^{q}([0,T]; W^{k-\iota,p}(\mathbb{R}^{d\times d})))$ and a mapping  $\upsilon \in L^{\bar{\sigma}}(\Omega; L^\infty([0,T]\times\mathbb{R}^{d\times d}))$ such that
\vspace{-0.5em}
\begin{align*}
D_{\theta} b(t,x) &= b'(t,x) \upsilon(\theta,t), \quad \forall\theta\leq t\leq T,\, \P-a.s.,
\end{align*}
where, $\forall t\in[0,T]$, $b'(t,x)$ is $\mathcal{F}_t$-adapted for any $x\in\mathbb{R}^d$ and $\upsilon(\theta,t)$ is a $\mathcal{F}_t$ adapted function for any $0\leq\theta\leq t$,
\item[ii)]  there exists $C_{1}\in L^{\bar{\sigma}}(\Omega;\mathbb{R}^{+*})$ such that one of the following statement is in force
\begin{itemize}
\item for any $0\leq \theta\leq s\leq t\leq T$,
\begin{equation}\label{eq:asmHolderb}\begin{array}{cc}
|\upsilon(\theta,t)| \leq C_{1}|\theta - t|^{H\iota},
\end{array}\end{equation}
\item $\iota = 0$ and $\upsilon(\theta,t) = C_1 \bold{1}_{\{\theta\leq \tau_b\}}$ where $\tau_b$ is a random variable with values in $[0,t]$,
\end{itemize}
\end{enumerate}
\end{assumption}

%\b{L. : Donner un exemple qui dépend de la trajectoire, et regarder la translation.}
%\begin{example}
%m
%\end{example}
We can now give our result.
\begin{theorem}\label{thm:CauchySDE}
Let $T>0$. Under Assumption \ref{asm:chainrule} (see below), there exists $\beta > 1/2$ such that Equation (\ref{eq:EDSbis}) admits a unique solution $Y\in\mathcal{C}^{\beta}([0,T];\mathbb{R}^d)$.
\end{theorem}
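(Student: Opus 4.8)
\textbf{Proof proposal for Theorem \ref{thm:CauchySDE}.}

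The plan is to follow the now-standard strategy of reformulating the SDE as the nonlinear Young ODE \eqref{eq:EDSbis} and to run a fixed-point/contraction argument in a H\"older space, the key input being quantitative regularity estimates on the averaged field $A$ defined in \eqref{eq:defA}. The crucial observation is that Theorem \ref{thm:main} applied with $f = b$ (and suitable choices of $(p,m,\alpha)$ dictated by Assumption \ref{asm:chainrule}) gives an explicit decomposition of $A_t(x) = \int_0^t b(s, W^H_s + x)\,ds$ into a "deterministic-semigroup" term plus three stochastic/divergence terms, each of which carries a smoothing in the space variable coming from the heat semigroup $P_{\frac{1}{2H}(r-u)^{2H}}$. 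First I would use Assumption \ref{asm:chainrule} (the chain-rule factorization $D_\theta b = b' \upsilon$ together with the H\"older or indicator bound on $\upsilon$) to control the two terms in \eqref{eq:mainformula} that involve $g_j$, since these are precisely the Malliavin-divergence terms; the factorization reduces them to ordinary stochastic integrals and Lebesgue integrals of $b'$, whose $W^{k-\iota,p}$-regularity and the extra factor $|\theta-t|^{H\iota}$ compensate exactly the loss $\iota$ in the Sobolev scale.

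Next I would establish the core estimate: for $0 \le u \le v \le T$ and $x,y\in\real^d$, a bound of the form
\begin{equation*}
\E\big[\,\| \delta A_{u,v}(x) - \delta A_{u,v}(y)\|^{\,m}\,\big]^{1/m} \lesssim |v-u|^{\beta}\, |x-y|^{\,\kappa}
\end{equation*}
for some $\beta > 1/2$ and some $\kappa\in(0,1]$ (and similarly the first-order difference in $x$), uniformly in the relevant parameters. This is obtained term-by-term from \eqref{eq:mainformula}: one estimates the $L^p(\Omega)$- or $L^\ell(\Omega)$-norms of each of the four contributions using the Burkholder--Davis--Gundy inequality for the two $dB_j$ terms, Minkowski/H\"older for the two $dr$ or $drdu$ terms, the smoothing property $\|P_\tau \varphi\|_{W^{m,p}} \lesssim \tau^{-\gamma/2}\|\varphi\|_{W^{m-\gamma,p}}$ of the heat semigroup to trade Sobolev regularity for a negative power of $\tau = \frac{1}{2H}(r-u)^{2H}$, and then checking that the resulting time integrals $\int_u^v (r-u)^{H-1/2}(r-u)^{-H\gamma}dr$ (and their BDG-squared analogues) converge and produce a positive power of $|v-u|$. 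The strict inequalities in Assumption \ref{asm:chainrule}, namely $\frac1H(\frac12 - \frac2\ell - \frac1q) + k - 1 - \frac dp > 0$ and $1/2 - H\alpha - 1/p > 0$ from Assumption \ref{assumption:main}, are exactly what guarantees that these exponents leave room for a H\"older exponent $\beta>1/2$ after a Sobolev embedding $W^{m,p}(\real^d)\hookrightarrow \mathcal{C}^\kappa(\real^d)$ (this is the "cost of an embedding" alluded to in the introduction).

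With such an estimate in hand, I would invoke the general well-posedness theory for nonlinear Young equations (a Young/sewing-lemma fixed point: given a two-parameter field $A_{u,v}(\cdot)$ that is $\beta$-H\"older in time with $\beta>1/2$ and Lipschitz-plus-a-bit in space, the equation $Y_t = Y_0 + \int_0^t A_{ds}(Y_s)$ has a unique solution in $\mathcal{C}^{\beta}([0,T];\real^d)$, constructed as the fixed point of the map $Z \mapsto Y_0 + \int_0^\cdot A_{ds}(Z_s)$ on a small time interval and then patched). Existence follows from the a priori bound and a compactness/continuity argument; uniqueness follows from the difference estimate on $\delta A_{u,v}(x) - \delta A_{u,v}(y)$ fed into a Gr\"onwall-type argument for the sewing lemma. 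I expect the main obstacle to be the bookkeeping in the second step: one has to juggle four indices of integrability ($\ell, q, p$ and the auxiliary $\sigma,\bar\sigma$ from the chain rule) together with the Sobolev order $k$ and the parameters $H,\iota$, and verify that a single choice of $(\alpha,m,\beta,\kappa)$ simultaneously makes Assumption \ref{assumption:main} hold for $f=b$, keeps all semigroup-time integrals finite, survives the Sobolev embedding into H\"older space, and yields $\beta>1/2$ — the two $g_j$-terms with the $\iota$-loss being the tightest constraint. Once the arithmetic of exponents closes, the analytic content is entirely supplied by Theorem \ref{thm:main} and the classical Young ODE machinery.
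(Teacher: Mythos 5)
Your proposal follows essentially the same route as the paper: apply Theorem \ref{thm:main} with $f=b$ to decompose $\delta A_{s,t}$, estimate each of the four terms in $L^{\ell}(\Omega;W^{1+d/p+\varepsilon_1,p}(\real^d))$ via BDG, Minkowski/H\"older and the heat-semigroup smoothing, use Assumption \ref{asm:chainrule} to reduce the $g_j$-terms to integrals of $b'$, embed into $\mathcal{C}_b^{1+\gamma}$, and conclude with the Catellier--Gubinelli well-posedness theory for nonlinear Young ODEs. The only detail worth making explicit is that your core estimate is a moment bound, so you need Kolmogorov's continuity theorem (as the paper does) to upgrade it to pathwise H\"older regularity of $t\mapsto A_t$ in $\mathcal{C}_b^{1+\gamma}(\real^d)$ before the deterministic Young machinery applies.
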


\begin{remark}\label{rmk:bndAssm}
The equality obtained in Theorem \ref{thm:main} holds in $L^{\infty}([0,T];L^p(\Omega;W^{m,p}(\mathbb{R}^d)))$. However, in the proof of Theorem \ref{thm:CauchySDE}, we  bound an increment of each term in $L^{\ell}(\Omega;W^{m,p}(\mathbb{R}^d))$. That is why we need the stronger Assumption \ref{asm:chainrule}.
\end{remark}

\begin{remark}
Even though $b$ might be defined in the sense of generalized functions (or Schwarz distribution), the Young integral \eqref{eq:EDSbis} can still be well-defined due to regularization effect of $(W^H_t)_{t\geq0}$ whereas the integral of the drift in \eqref{eq:EDS} does not make sense. Nevertheless, it is possible to define a notion of "controlled solution" for \eqref{eq:EDS} (see  \cite{catellier2016averaging}).
\end{remark}

\subsection{The Cauchy problem for Young ODEs}

We recall here some results on the nonlinear Young integration procedure and the Cauchy problem related to the Young ODE. Here, we simply give the results from \cite{catellier2016averaging} but the reader might also be interested in \cite{gubinelli2004controlling,friz2010multidimensional,lyons2007differential}. 

\begin{defi}
Let $T>0$, $\beta,\gamma\in (0,1]$, $I = [0,T]$ and $V,W$ to Banach spaces. For all $n\in\mathbb{N}$, and any mapping $A: I\times V\to W$, we define the norm
\begin{equation*}
\|A\|_{\beta,\gamma} = \sup_{\substack{s,t\in[0,T]\\s\neq t}} \sup_{\substack{x,y\in V\\x\neq y}} \frac{|\delta A_{s,t}(x) - \delta A_{s,t}(y)|_W}{|t-s|^{\beta}|x-y|^{\gamma}_V},
\end{equation*}
and
\begin{equation*}
\|A\|_{\beta,n+\gamma} = \|\mathfrak{D}^n A\|_{\beta,\gamma}+\sum_{k = 0}^n  \sup_{\substack{s,t\in[0,T]\\s\neq t}} \sup_{x\in\mathbb{R}^d}\frac{|\mathfrak{D}^n \delta A_{s,t}(x)|_{\mathcal{L}^k(V;W)}}{|t-s|^{\beta}},
\end{equation*}
where $\mathfrak{D}$ denotes the Fr\'echet derivative from $V$ to $W$.
\end{defi}

We can now proceed to state the results from \cite{catellier2016averaging}. The first result concerns the existence of the nonlinear Young integral. 
\begin{theorem}
Let $\beta,\gamma,\rho>0$ with $\beta + \gamma\rho>1$, $V,W$ two Banach spaces and $I$ a finite interval of $\mathbb{R}$. We consider $A\in \mathcal{C}^{\beta,\gamma}(I,V;W)$ and $Y\in\mathcal{C}^{\rho}(I;V)$. For any $s,t\in I$ such that $s\leq t$, the following nonlinear Young integral exists and is independent of the partition
\begin{equation*}
\int_s^t A_{dr}(Y_r) := \lim_{\substack{\Pi\;\textrm{partition of}\;[s,t]\\ |\Pi|\to 0}} \sum_{[u,v]\in \Pi}\delta A_{u,v}(Y_u).
\end{equation*}
Furthermore, we have
\begin{enumerate}
\item for all $u\in [s,t]$, the equality
\begin{equation*}
\int_s^t A_{dr}(Y_r) = \int_s^u A_{dr}(Y_r) +  \int_u^t A_{dr}(Y_r),
\end{equation*}
\item the following bound
\begin{equation*}
\left| \int_s^t A_{dr}(Y_r) - \delta A_{s,t}(Y_s) \right|_W \lesssim_{\beta,\gamma,\rho} \|A\|_{\beta,\gamma}  \|Y\|_{\mathcal{C}^{\rho}(I;V)}^{\gamma}(t-s)^{\beta+\gamma\rho}
\end{equation*}
\item for all $s,t\in I$ such that $s\leq t$ and $R>0$, the map
\begin{equation*}
(Y,A) \to \int_s^t A_{dr}(Y_r)
\end{equation*}
is a continuous function from $(\{Y\in\mathcal{C}^{\rho}(I;V);\; \|Y\|_{\mathcal{C}^{\rho}(I;V)}\leq R\},\|\cdot\|_{L^{\infty}([s,t];V)})\times (\mathcal{C}^{\beta,\gamma}(I,V;W),\|\cdot\|_{\beta,\gamma})$ to $W$.
\end{enumerate}
\end{theorem}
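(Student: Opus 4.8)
The plan is to reduce the statement to the classical sewing lemma (Gubinelli's $\Lambda$-map, equivalently the Young--Lóyève argument). Set $\Delta_I := \{(u,v)\in I^2 : u\le v\}$ and define the \emph{germ} $\Xi_{u,v} := \delta A_{u,v}(Y_u)\in W$ for $(u,v)\in\Delta_I$; it is continuous on $\Delta_I$ because $A$ and $Y$ are. The first step is to control its second-order defect $\delta\Xi_{u,m,v} := \Xi_{u,v} - \Xi_{u,m} - \Xi_{m,v}$ for $u\le m\le v$. Since $r\mapsto A_r(x)$ is additive, $\delta A_{u,v} = \delta A_{u,m} + \delta A_{m,v}$ pointwise in the space variable, whence
\[
\delta\Xi_{u,m,v} = \delta A_{m,v}(Y_u) - \delta A_{m,v}(Y_m),
\]
and therefore, directly from the definition of $\|A\|_{\beta,\gamma}$ and the $\rho$-Hölder regularity of $Y$,
\[
|\delta\Xi_{u,m,v}|_W \le \|A\|_{\beta,\gamma}\,|v-m|^{\beta}\,|Y_u-Y_m|_V^{\gamma} \le \|A\|_{\beta,\gamma}\,\|Y\|_{\mathcal{C}^{\rho}(I;V)}^{\gamma}\,|v-u|^{\beta+\gamma\rho}.
\]
Because $\varepsilon := \beta + \gamma\rho > 1$, the sewing lemma \cite{gubinelli2004controlling} applies: there is a unique map $\mathcal{I}:\Delta_I\to W$ with $\delta\mathcal{I}\equiv 0$ and $|\mathcal{I}_{s,t} - \Xi_{s,t}|_W \lesssim_{\varepsilon} \|A\|_{\beta,\gamma}\|Y\|_{\mathcal{C}^{\rho}}^{\gamma}|t-s|^{\varepsilon}$, and moreover $\mathcal{I}_{s,t} = \lim_{|\Pi|\to 0}\sum_{[u,v]\in\Pi}\Xi_{u,v}$ along any sequence of partitions of $[s,t]$ with vanishing mesh (completeness of $W$ guarantees that the Riemann--Young sums are Cauchy). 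We then \emph{define} $\int_s^t A_{dr}(Y_r):=\mathcal{I}_{s,t}$. This gives existence and partition-independence, the Chasles relation of item~1 (which is exactly $\delta\mathcal{I}\equiv 0$), and the estimate of item~2 (recall $\Xi_{s,t}=\delta A_{s,t}(Y_s)$).

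For item~3, fix $R>0$ and $s\le t\in I$. Continuity in $A$ for the $\|\cdot\|_{\beta,\gamma}$-norm (uniformly for $\|Y\|_{\mathcal{C}^{\rho}}\le R$) is immediate: both the germ and its defect are linear in $A$, so the estimates above applied to $A-\tilde A$ give $|\mathcal{I}^{A,Y}_{s,t} - \mathcal{I}^{\tilde A,Y}_{s,t}|_W \lesssim_{\varepsilon,R}\|A-\tilde A\|_{\beta,\gamma}(t-s)^{\beta}$. The delicate point is continuity in $Y$ for the $L^{\infty}([s,t];V)$ topology on the Hölder ball. Put $\phi:=Y-\tilde Y$ with $\|Y\|_{\mathcal{C}^{\rho}},\|\tilde Y\|_{\mathcal{C}^{\rho}}\le R$, and consider the difference germ $g_{u,v}:=\Xi^{Y}_{u,v}-\Xi^{\tilde Y}_{u,v}=\delta A_{u,v}(Y_u)-\delta A_{u,v}(\tilde Y_u)$, so that $|g_{u,v}|_W \le \|A\|_{\beta,\gamma}|v-u|^{\beta}\|\phi\|_{L^{\infty}}^{\gamma}$. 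For its defect one has two bounds: grouping $Y$ against $\tilde Y$ at equal time points gives $|\delta g_{u,m,v}|_W \le 2\|A\|_{\beta,\gamma}|v-m|^{\beta}\|\phi\|_{L^{\infty}}^{\gamma}$, while grouping into time increments and using the $\mathcal{C}^{\rho}$-bounds gives $|\delta g_{u,m,v}|_W \le 2R^{\gamma}\|A\|_{\beta,\gamma}|v-m|^{\beta}|m-u|^{\gamma\rho}$. Interpolating these with weight $\theta\in(0,1)$ yields $|\delta g_{u,m,v}|_W \le 2R^{\gamma\theta}\|A\|_{\beta,\gamma}\|\phi\|_{L^{\infty}}^{\gamma(1-\theta)}|v-u|^{\beta+\gamma\rho\theta}$; since $\beta+\gamma\rho>1$ one may choose $\theta<1$ with $\beta+\gamma\rho\theta>1$, so the sewing lemma applied to the germ $g$ (using linearity of the sewing map) gives $|\mathcal{I}^{A,Y}_{s,t}-\mathcal{I}^{A,\tilde Y}_{s,t}|_W \lesssim_{\varepsilon,R}\|A\|_{\beta,\gamma}\big(\|\phi\|_{L^{\infty}}^{\gamma}(t-s)^{\beta}+\|\phi\|_{L^{\infty}}^{\gamma(1-\theta)}(t-s)^{\beta+\gamma\rho\theta}\big)$. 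Combining with the $A$-estimate by the triangle inequality gives joint continuity of $(Y,A)\mapsto\int_s^t A_{dr}(Y_r)$ from $(\{\|Y\|_{\mathcal{C}^{\rho}}\le R\},\|\cdot\|_{L^{\infty}})\times(\mathcal{C}^{\beta,\gamma},\|\cdot\|_{\beta,\gamma})$ to $W$.

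The main obstacle is precisely this last interpolation step in item~3: the germ is built from the point values $Y_u$, so a naive estimate only controls differences in $Y$ through the strong $\mathcal{C}^{\rho}$-norm, which is useless against the $L^{\infty}$-topology; the resolution is to spend part of the surplus regularity $\beta+\gamma\rho-1>0$ in order to extract a strictly positive power of $\|Y-\tilde Y\|_{L^{\infty}}$. Everything else is routine: the defect identity is a one-line consequence of the additivity of $r\mapsto A_r$, and the passage from the defect bound to existence, to the Chasles property and to the quantitative estimate is exactly the content of the sewing lemma, whose constants depend only on $\varepsilon=\beta+\gamma\rho$.
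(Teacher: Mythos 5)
Your proposal is correct. Note that the paper itself gives no proof of this statement: it is quoted verbatim from \cite{catellier2016averaging}, so there is nothing internal to compare against. Your argument --- identify the germ $\Xi_{u,v}=\delta A_{u,v}(Y_u)$, compute its defect $\delta\Xi_{u,m,v}=\delta A_{m,v}(Y_u)-\delta A_{m,v}(Y_m)$ via additivity of $r\mapsto A_r$, bound it by $\|A\|_{\beta,\gamma}\|Y\|_{\mathcal{C}^\rho}^{\gamma}|v-u|^{\beta+\gamma\rho}$, and invoke the sewing lemma --- is exactly the standard route taken in the cited reference, and it delivers existence, the Chasles relation and the quantitative bound in one stroke. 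Your treatment of item~3 is the genuinely non-trivial part and is handled correctly: the interpolation $\min(a,b)\le a^{1-\theta}b^{\theta}$ between the $\|Y-\tilde Y\|_{L^\infty}^{\gamma}$-bound and the $R^{\gamma}|m-u|^{\gamma\rho}$-bound on the difference germ's defect, with $\theta$ chosen so that $\beta+\gamma\rho\theta>1$, is precisely how one trades the surplus regularity for continuity in the weak $L^\infty$ topology on the Hölder ball. One cosmetic caveat: as the paper writes it, $\|\cdot\|_{\beta,\gamma}$ is only a seminorm (it does not control $\delta A_{s,t}(x)$ itself, only differences in the space variable), so your claim $|\mathcal{I}^{A,Y}_{s,t}-\mathcal{I}^{\tilde A,Y}_{s,t}|\lesssim\|A-\tilde A\|_{\beta,\gamma}(t-s)^{\beta}$ needs the term $|\delta(A-\tilde A)_{s,t}(Y_s)|$ to be controlled as well, which requires the full norm of $\mathcal{C}^{\beta,\gamma}$ (including the supremum term that the paper only writes explicitly for $\|A\|_{\beta,n+\gamma}$); this is an imprecision in the statement as transcribed rather than a gap in your reasoning.
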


The next result gives the existence of a solution to the Equation \eqref{eq:EDSbis}.

\begin{theorem}
Let $\beta>1/2$, $\gamma\in[0,1)$ such that
\begin{equation*}
\beta(1+\gamma)>1.
\end{equation*}
We consider $A\in\mathcal{C}^{\beta,\gamma}([0,T];\mathbb{R}^d)$. There exists a solution $Y\in\mathcal{C}^{\beta}([0,T];\mathbb{R}^d)$ to the nonlinear Young differential equation \eqref{eq:EDSbis}. Furthermore, there exists a constant $C$ depending on $\beta,\gamma,T$ and $\|A\|_{\beta,\gamma}$ such that
\begin{equation*}
\|Y\|_{\mathcal{C}^{\beta}([0,T])}\leq C(|Y_0|+1).
\end{equation*}
\end{theorem}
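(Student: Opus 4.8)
The plan is to produce a solution as a fixed point of the solution map, building the a priori bound into the invariant set and invoking Schauder's theorem rather than a contraction (uniqueness is not asserted here and indeed requires more). First I would fix the datum $Y_0$ and define, for $Y\in\mathcal{C}^{\beta}([0,T];\real^d)$,
$$(\mathcal{I}Y)_t := Y_0 + \int_0^t A_{dr}(Y_r),$$
which is well defined by the existence result for the nonlinear Young integral proved just above: the hypothesis $\beta(1+\gamma)>1$ is exactly the condition $\beta+\gamma\rho>1$ with $\rho=\beta$. By additivity (first item of that result) one has $\delta(\mathcal{I}Y)_{s,t}=\int_s^t A_{dr}(Y_r)$, and the sewing estimate (second item) gives, for every $s\leq t$,
$$|\delta(\mathcal{I}Y)_{s,t}| \leq |\delta A_{s,t}(Y_s)| + C\,\|A\|_{\beta,\gamma}\,\|Y\|_{\mathcal{C}^{\beta}}^{\gamma}\,(t-s)^{\beta(1+\gamma)}.$$
Splitting $\delta A_{s,t}(Y_s)$ against a reference point and using the definition of $\|A\|_{\beta,\gamma}$ yields $|\delta A_{s,t}(Y_s)|\leq C(1+|Y_s|^{\gamma})(t-s)^{\beta}$.

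Second, I would use these estimates to exhibit an invariant convex set. For $M>0$ set
$$K := \{\,Y\in\mathcal{C}^{\beta}([0,T];\real^d):\ Y(0)=Y_0,\ \|Y\|_{\mathcal{C}^{\beta}}\leq M\,\}.$$
For $Y\in K$ one has $|Y_s|\leq |Y_0|+MT^{\beta}$, so dividing the increment bound by $(t-s)^{\beta}$, using $(t-s)^{\beta\gamma}\leq T^{\beta\gamma}$, and adding the contribution $|Y_0|$ of the value at the origin gives
$$\|\mathcal{I}Y\|_{\mathcal{C}^{\beta}} \leq |Y_0| + C\big(1+(|Y_0|+MT^{\beta})^{\gamma}\big) + C\,\|A\|_{\beta,\gamma}\,T^{\beta\gamma}\,M^{\gamma}.$$
Since $\gamma<1$, the right-hand side grows sublinearly in $M$, so taking $M=K_0(|Y_0|+1)$ with $K_0$ large enough (depending only on $\beta,\gamma,T,\|A\|_{\beta,\gamma}$) makes the right-hand side $\leq M$; hence $\mathcal{I}(K)\subseteq K$. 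This is exactly the choice that produces the claimed bound $\|Y\|_{\mathcal{C}^{\beta}}\leq C(|Y_0|+1)$ once a fixed point is found.

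Third, I would apply Schauder's fixed point theorem. The set $K$ is convex and, being bounded in $\mathcal{C}^{\beta}$, is relatively compact in $\mathcal{C}^{\beta'}([0,T];\real^d)$ for any $\beta'<\beta$ by Arzel\`a--Ascoli, and closed in that topology, hence compact. On $K$ the map $\mathcal{I}$ is continuous for the $\mathcal{C}^{\beta'}$ (equivalently $L^{\infty}$) topology: this is precisely the content of the continuity statement (third item) of the Young integral result, which asserts continuity of $Y\mapsto\int_s^t A_{dr}(Y_r)$ in the $L^{\infty}$ norm over sets bounded in $\mathcal{C}^{\rho}=\mathcal{C}^{\beta}$, and $K$ is such a set. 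Schauder's theorem then furnishes $Y\in K$ with $\mathcal{I}Y=Y$, i.e. a solution of \eqref{eq:EDSbis} in $\mathcal{C}^{\beta}$, satisfying the stated a priori bound by the construction of $M$.

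The main obstacle I anticipate is the self-map step: one must control $\int_0^{\cdot}A_{dr}(Y_r)$ uniformly over $K$ and verify that the bound closes with the correct dependence on $|Y_0|$. The condition $\gamma<1$ is essential—it makes the nonlinear term $\|Y\|_{\mathcal{C}^{\beta}}^{\gamma}$ subordinate to $M$ and lets $K_0$ be chosen independently of $Y_0$—while the threshold $\beta(1+\gamma)>1$ is what guarantees the integral and its sewing estimate exist at all. A secondary delicate point is that the Young integral is continuous in the trajectory only for the weak ($L^{\infty}$) topology while the argument keeps the $\mathcal{C}^{\beta}$ norm bounded; compactness and continuity must therefore be matched on the weaker space $\mathcal{C}^{\beta'}$, which is exactly why a Schauder argument, rather than a Banach contraction, is the right tool here.
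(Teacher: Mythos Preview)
The paper does not actually prove this theorem: it is stated in Section~4.2 as a result quoted from \cite{catellier2016averaging}, so there is no in-paper proof to compare against. Your Schauder-based argument is correct and is essentially the approach used in that reference: the sewing estimate from the preceding theorem gives the increment bound, the sublinearity coming from $\gamma<1$ makes a ball $K=\{\|Y\|_{\mathcal{C}^{\beta}}\leq M\}$ with $M\sim |Y_0|+1$ invariant under the solution map, compactness of $K$ in $\mathcal{C}^{\beta'}$ for $\beta'<\beta$ plus the $L^{\infty}$-continuity of the nonlinear Young integral on $\mathcal{C}^{\beta}$-bounded sets closes the argument. The one point worth making explicit is that the bound $|\delta A_{s,t}(Y_s)|\leq C(1+|Y_s|^{\gamma})(t-s)^{\beta}$ requires, beyond the seminorm $\|A\|_{\beta,\gamma}$, that $t\mapsto A_t(0)$ is itself $\beta$-H\"older; this is part of the definition of the space $\mathcal{C}^{\beta,\gamma}$ in \cite{catellier2016averaging} even though the paper's displayed formula only records the seminorm part.
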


We finally state a uniqueness result which only relies on the regularity of $A$.

\begin{theorem}\label{thm:CauchyCatGub}
Let $\beta>1/2$, $\gamma\in[0,1]$ such that $A\in\mathcal{C}^{\beta,\gamma+1}$. Then, there exists a unique solution  $Y\in\mathcal{C}^{\beta}([0,T];\mathbb{R}^d)$ to the nonlinear Young differential equation \eqref{eq:EDSbis}.
\end{theorem}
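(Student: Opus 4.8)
The plan is to treat this as a deterministic uniqueness statement for the nonlinear Young equation \eqref{eq:EDSbis}, exploiting the fact that the gain over the existence theorem is the additional spatial derivative recorded in $A\in\mathcal{C}^{\beta,\gamma+1}$. I would use it to \emph{linearise} the equation satisfied by the difference of two solutions, and then show this linear equation can only have the trivial solution. Fix two solutions $Y,\tilde Y\in\mathcal{C}^{\beta}([0,T];\mathbb{R}^d)$ with $Y_0=\tilde Y_0$, set $Z:=Y-\tilde Y$ (so $Z_0=0$), and record the a priori bound $M:=\|Y\|_{\mathcal{C}^\beta}+\|\tilde Y\|_{\mathcal{C}^\beta}<\infty$ from the existence theorem. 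Since $A\in\mathcal{C}^{\beta,\gamma+1}$ the map $x\mapsto\delta A_{u,v}(x)$ is $\mathcal{C}^1$, so by the fundamental theorem of calculus, for every $u<v$,
\[
\delta A_{u,v}(Y_u)-\delta A_{u,v}(\tilde Y_u)=\delta\mathbb{A}_{u,v}\,Z_u,\qquad \delta\mathbb{A}_{u,v}:=\int_0^1\mathfrak{D}\delta A_{u,v}(\tilde Y_u+\tau Z_u)\,d\tau\in\mathbb{R}^{d\times d}.
\]
Subtracting the two instances of \eqref{eq:EDSbis} and using additivity of the nonlinear Young integral, the Riemann sums give $Z_t-Z_s=\lim_{|\Pi|\to0}\sum_{[u,v]\in\Pi}\delta\mathbb{A}_{u,v}Z_u$; in other words $Z$ solves the \emph{linear} nonlinear-Young equation driven by the matrix germ $\mathbb{A}$.

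Second, I would establish the regularity of this coefficient germ. From the uniform control $\sup_x|\mathfrak{D}\delta A_{u,v}(x)|\lesssim\|A\|_{\beta,\gamma+1}|v-u|^\beta$ carried by $\mathcal{C}^{\beta,\gamma+1}$ one gets $|\delta\mathbb{A}_{u,v}|\lesssim\|A\|_{\beta,\gamma+1}|v-u|^\beta$. For the coboundary, the key point is that $\delta A$ is a genuine time increment, so $\mathfrak{D}\delta A_{u,v}=\mathfrak{D}\delta A_{u,\theta}+\mathfrak{D}\delta A_{\theta,v}$ at a fixed base point; this collapses the coefficient coboundary to $\delta\mathbb{A}_{u,v}-\delta\mathbb{A}_{u,\theta}-\delta\mathbb{A}_{\theta,v}=\int_0^1[\mathfrak{D}\delta A_{\theta,v}(\tilde Y_u+\tau Z_u)-\mathfrak{D}\delta A_{\theta,v}(\tilde Y_\theta+\tau Z_\theta)]d\tau$, which by the $\gamma$-Hölder-in-space modulus of $\mathfrak{D}\delta A_{\theta,v}$ and the $\beta$-Hölder regularity of $\tilde Y$ and $Z$ is $\lesssim_M |v-u|^{\beta(1+\gamma)}$. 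Writing $\Xi_{u,v}:=\delta\mathbb{A}_{u,v}Z_u$ and expanding $\delta\Xi_{u,\theta,v}=(\delta\mathbb{A}_{u,v}-\delta\mathbb{A}_{u,\theta}-\delta\mathbb{A}_{\theta,v})Z_u+\delta\mathbb{A}_{\theta,v}(Z_u-Z_\theta)$ then yields, on any $[s,t]$,
\[
|\delta\Xi_{u,\theta,v}|\lesssim_M\big(|v-u|^{\beta(1+\gamma)}+|v-u|^{2\beta}\big)\,\|Z\|_{\mathcal{C}^\beta([s,t])}.
\]
Crucially this bound is \emph{linear} in $Z$, and $\beta>1/2$ (so that $2\beta>1$, together with $\beta(1+\gamma)>1$) makes the exponent strictly larger than $1$.

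Third, I would close the estimate by the sewing argument underlying the preceding existence theorem: the coboundary bound gives $|Z_t-Z_s-\delta\mathbb{A}_{s,t}Z_s|\lesssim_M|t-s|^{1+\varepsilon}\|Z\|_{\mathcal{C}^\beta([s,t])}$ for some $\varepsilon>0$, while the leading term obeys $|\delta\mathbb{A}_{s,t}Z_s|\lesssim|t-s|^\beta\|Z\|_{\infty([s,t])}$. Dividing by $|t-s|^\beta$, taking suprema over a short interval $[s,s+h]$, and using $\|Z\|_{\infty([s,s+h])}\le|Z_s|+h^\beta\|Z\|_{\mathcal{C}^\beta([s,s+h])}$ produces a self-improving inequality $\|Z\|_{\mathcal{C}^\beta([s,s+h])}\le C_M\big(|Z_s|+(h^\beta+h^\varepsilon)\|Z\|_{\mathcal{C}^\beta([s,s+h])}\big)$. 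Choosing $h$ small enough (depending only on $C_M$) to absorb the last term gives $\|Z\|_{\mathcal{C}^\beta([s,s+h])}\le 2C_M|Z_s|$, hence $|Z_{s+h}|\le(1+2C_Mh^\beta)|Z_s|$. Starting from $Z_0=0$ and iterating this bound across a finite partition of $[0,T]$ of mesh $h$ forces $Z\equiv0$, i.e. $Y=\tilde Y$; combined with the existence theorem this gives the unique solution in $\mathcal{C}^\beta([0,T];\mathbb{R}^d)$.

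The main obstacle is the coefficient-coboundary estimate: one must extract a time exponent strictly above $1$ \emph{while keeping the bound linear in} $Z$. Both hypotheses enter here — $\beta>1/2$ supplies the $2\beta>1$ contribution, and the extra derivative in $A\in\mathcal{C}^{\beta,\gamma+1}$ (rather than merely $\mathcal{C}^{\beta,\gamma}$) is exactly what lets one differentiate $\delta A$ and replace the difference of the two solutions by the linear factor $Z_u$, thereby avoiding a naive $\|Z\|_\infty^\gamma$ Gronwall inequality that would be powerless against Peano-type non-uniqueness. The delicate regime is $\gamma$ close to $0$, where $\beta(1+\gamma)$ is barely above $1$ and one must track constants carefully and use the genuine Lipschitz (not merely bounded) dependence of $\delta A$ on the space variable; keeping all constants' dependence on $M$ explicit is what makes the final iteration close.
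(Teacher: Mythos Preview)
The paper does not supply its own proof of this statement; it is quoted from \cite{catellier2016averaging} (see the opening line of Section~4.2: ``we simply give the results from \cite{catellier2016averaging}''). Your argument is precisely the one used there: linearise the difference $Z=Y-\tilde Y$ via the fundamental theorem of calculus (this is exactly where the extra spatial derivative in $\mathcal{C}^{\beta,\gamma+1}$, as opposed to $\mathcal{C}^{\beta,\gamma}$, is spent), control the coboundary of the germ $\Xi_{u,v}=\delta\mathbb A_{u,v}Z_u$ by a sewing-type estimate that is \emph{linear} in $Z$, and close by an absorption/iteration argument on short subintervals. So the proposal is correct and matches the intended reference; there is nothing in the present paper to compare against.

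One point to flag. In your coboundary bound you rely on $\beta(1+\gamma)>1$ alongside $2\beta>1$, and your final paragraph describes ``$\gamma$ close to $0$'' as the regime where ``$\beta(1+\gamma)$ is barely above $1$''. But $\beta(1+\gamma)>1$ is \emph{not} among the hypotheses as displayed above (only $\beta>1/2$ and $\gamma\in[0,1]$), and it can fail: take $\beta=0.6$, $\gamma=0.2$. This is an artefact of the paper's transcription of the Catellier--Gubinelli statement rather than a defect in your reasoning --- in the original that condition is part of the standing assumptions carried over from the existence result. You are right to impose it; just be aware that it is an additional hypothesis relative to what is written here, and that without it the first coboundary term $C_M|v-u|^{\beta(1+\gamma)}\|Z\|_\infty$ genuinely cannot be sewn (the embedding $\mathcal{C}^{\beta,\gamma+1}\subset\mathcal{C}^{\beta,1}$ does not help, since the offending term comes from the spatial H\"older modulus of $\mathfrak D A$, not of $A$).
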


\subsection{Proof of Theorem \ref{thm:CauchySDE}}

To obtain such results in our context, we need Theorem \ref{thm:main} and, from there, we essentially have to derive the proper bounds on $A$ in adequate Sobolev spaces. Before proceeding in this direction, we recall the smoothing properties of the heat semigroup.
\begin{lemma}
\label{lemma:estsemigroup}
Let $m,\gamma \in\mathbb{R}$ and $p\in (1,\infty)$. For any $f\in W^{m,p}(\mathbb{R}^d)$ and $\tau\in\mathbb{R}^{+*}$, we have
\begin{equation*}
\left\|P_{\tau} f \right\|_{W^{m,p}(\mathbb{R}^d)} \lesssim \tau^{-\gamma/2}\|f\|_{W^{m-\gamma,p}(\mathbb{R}^d)}.
\end{equation*}
\end{lemma}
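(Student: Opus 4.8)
The plan is to convert the statement into an $L^p$ Fourier-multiplier estimate and then conclude with the Mikhlin--H\"ormander theorem after a rescaling. First I would normalize the Fourier transform so that $\widehat{P_\tau g}(\xi) = e^{-c\tau|\xi|^2}\widehat g(\xi)$ for a fixed constant $c>0$, and write $J^s$ for the Bessel potential $\mathcal{F}^{-1}((1+|\xi|^2)^{s/2}\mathcal{F}\cdot)$, which is an isometry from $W^{m,p}(\mathbb{R}^d)$ onto $W^{m-s,p}(\mathbb{R}^d)$. Since $P_\tau$ and all the $J^s$ are Fourier multipliers, hence commute, the definition of the Sobolev norm gives
\begin{equation*}
\|P_\tau f\|_{W^{m,p}(\mathbb{R}^d)} = \|J^m P_\tau f\|_{L^p} = \|J^{\gamma}P_\tau\,(J^{m-\gamma}f)\|_{L^p} = \|T_{m_\tau}g\|_{L^p},
\end{equation*}
where $g:=J^{m-\gamma}f$ (so $\|g\|_{L^p}=\|f\|_{W^{m-\gamma,p}(\mathbb{R}^d)}$) and $T_{m_\tau}$ is the Fourier multiplier with symbol $m_\tau(\xi):=(1+|\xi|^2)^{\gamma/2}e^{-c\tau|\xi|^2}$. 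Thus the lemma reduces to the operator-norm bound $\|T_{m_\tau}\|_{L^p\to L^p}\lesssim \tau^{-\gamma/2}$.

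Second, I would use the scale invariance of the $L^p$ multiplier norm. A direct computation shows $m_\tau(\xi) = \tau^{-\gamma/2}\psi_\tau(\sqrt{\tau}\,\xi)$ with $\psi_\tau(\eta):=(\tau+|\eta|^2)^{\gamma/2}e^{-c|\eta|^2}$, and since $a\mapsto a(\lambda\,\cdot)$ leaves the $L^p\to L^p$ norm of the associated multiplier operator unchanged for every $\lambda>0$, we obtain $\|T_{m_\tau}\|_{L^p\to L^p} = \tau^{-\gamma/2}\|T_{\psi_\tau}\|_{L^p\to L^p}$. It then suffices to bound $\|T_{\psi_\tau}\|_{L^p\to L^p}$ uniformly for $\tau$ in the range of interest --- in all the applications of this paper $\tau$ stays in a bounded interval, namely $\tau\in(0,\frac{1}{2H}T^{2H}]$, and the implicit constant may depend on its upper bound.

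Third, I would check the Mikhlin--H\"ormander hypotheses for $\psi_\tau$ uniformly in $\tau$. Applying the Leibniz rule to the product $(\tau+|\eta|^2)^{\gamma/2}\cdot e^{-c|\eta|^2}$: the Gaussian factor and all of its derivatives are Schwartz, while $(\tau+|\eta|^2)^{\gamma/2}$ satisfies $|\partial^\alpha_\eta(\tau+|\eta|^2)^{\gamma/2}|\lesssim (\tau+|\eta|^2)^{\gamma/2}|\eta|^{-|\alpha|}$ with a constant independent of $\tau$; for $\gamma\geq 0$ and bounded $\tau$ the prefactor $(\tau+|\eta|^2)^{\gamma/2}$ is itself controlled by $(1+|\eta|^2)^{\gamma/2}$, so $|\partial^\alpha_\eta\psi_\tau(\eta)|\lesssim |\eta|^{-|\alpha|}$ for all $|\alpha|\leq\lfloor d/2\rfloor+1$, uniformly in $\tau$, and the Mikhlin--H\"ormander theorem yields $\|T_{\psi_\tau}\|_{L^p\to L^p}\lesssim 1$, hence the claim. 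A shortcut is available when $\gamma=2k$ is an even nonnegative integer: then $P_\tau f = p_\tau*f$ with $p_\tau$ the Gaussian heat kernel, $\Delta^k p_\tau$ is a constant multiple of $\partial_\tau^k p_\tau$, and $\|\partial_\tau^k p_\tau\|_{L^1(\mathbb{R}^d)}\lesssim \tau^{-k}$ by scaling, so Young's convolution inequality gives $\|J^m P_\tau f\|_{L^p} = \|((1-\Delta)^k p_\tau)*(J^{m-\gamma}f)\|_{L^p}\lesssim\tau^{-k}\|f\|_{W^{m-\gamma,p}}$, and the general real $\gamma$ follows by interpolation.

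Since the result is classical, there is no deep obstacle; the one point requiring genuine care is the uniformity in $\tau$ of the Mikhlin estimates, and this is what forces the bound to be read with $\tau$ ranging over a bounded interval (exactly the situation in every application here): for $\gamma>0$ the factor $\tau^{-\gamma/2}$ is in fact false as $\tau\to+\infty$, whereas on a bounded interval and for $\gamma\geq 0$ the prefactor $(\tau+|\eta|^2)^{\gamma/2}$ is harmless and the argument above goes through verbatim. If $\gamma<0$ is ever needed, one further splits $\psi_\tau$ into a low-frequency part (treated by hand) and a high-frequency part (to which the above applies); otherwise everything is routine.
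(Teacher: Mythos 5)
The paper states Lemma~\ref{lemma:estsemigroup} without proof --- it is recalled as a classical smoothing property of the heat semigroup --- so there is no argument of the authors' to compare yours against. Your proof is correct and is the standard one: the reduction $\|P_\tau f\|_{W^{m,p}}=\|T_{m_\tau}(J^{m-\gamma}f)\|_{L^p}$ with $m_\tau(\xi)=(1+|\xi|^2)^{\gamma/2}e^{-c\tau|\xi|^2}$ is exact given the Bessel-potential definition of $W^{m,p}$ used in the paper, the dilation invariance of the $L^p$ multiplier norm correctly extracts the factor $\tau^{-\gamma/2}$, and the Mikhlin--H\"ormander bounds for $\psi_\tau(\eta)=(\tau+|\eta|^2)^{\gamma/2}e^{-c|\eta|^2}$ are uniform in $\tau$ on a bounded interval when $\gamma\geq 0$ (the Gaussian absorbs the loss at high frequency, and $(\tau+|\eta|^2)^{\gamma/2}\lesssim 1$ at low frequency). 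You also add genuine value by observing that the lemma as literally stated (all $\tau>0$, all $\gamma\in\mathbb{R}$) cannot hold and must be read with $\tau$ ranging over a bounded set and $\gamma\geq 0$; this is indeed the only regime the paper uses, since every application has $\tau=\frac{1}{2H}(s-t)^{2H}\leq\frac{1}{2H}T^{2H}$ and a nonnegative gain of derivatives. The one imprecision is your closing remark about $\gamma<0$: no low/high frequency splitting can rescue that case, because already for $p=2$, $m=0$ and $\widehat f$ concentrated near the origin the left-hand side stays comparable to $\|f\|_{L^2}$ while $\tau^{-\gamma/2}\|f\|_{W^{-\gamma,2}}\to 0$ as $\tau\to 0^+$; since the paper never invokes the lemma with $\gamma<0$, this does not affect anything, but the sentence should be dropped rather than left as a claimed routine extension.
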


We are now in position to prove the following result.

\begin{prop} Under Assumption \ref{asm:chainrule}, there exists $\gamma>0$ and $\beta>1/2$ such that, up to a modification, $A\in\mathcal{C}^{\beta}([0,T];\mathcal{C}_b^{1+\gamma}(\mathbb{R}^d))$ where $\mathcal{C}_b^{1+\gamma}(\mathbb{R}^d)$ is the space of bounded and $1+\gamma$-H\"older functions.
\end{prop}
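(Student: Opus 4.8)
The plan is to apply Theorem \ref{thm:main} to $f = b$ (one component at a time, or component-wise in $\real^d$), so that $A_u(x) = \int_0^u b(s,x+W^H_s)\,ds$ is expressed as a sum of four terms: the ``semigroup'' term $\int_0^u P_{\frac1{2H}r^{2H}}b(r,W^H(r)+x)\,dr$, two stochastic-integral terms against $dB_j$, and one Lebesgue-integral term. The strategy is then to estimate, for each of these four terms $T^{(i)}$, the increment $\delta T^{(i)}_{u,v}(x) = T^{(i)}_v(x) - T^{(i)}_u(x)$ in the norm of $W^{m,p}(\real^d)$ (with $m$ chosen so that $W^{m,p}(\real^d)\hookrightarrow \mathcal{C}_b^{1+\gamma}(\real^d)$ by Sobolev embedding, i.e.\ $m - d/p > 1+\gamma$), showing a bound of the form $\E\big[\|\delta T^{(i)}_{u,v}\|_{W^{m,p}}^\ell\big]^{1/\ell} \lesssim |v-u|^{\beta\ell}$-type estimates with $\beta>1/2$; then Kolmogorov's continuity criterion upgrades these moment bounds to the almost-sure statement $A\in\mathcal{C}^\beta([0,T];W^{m,p}(\real^d))$, hence in $\mathcal{C}^\beta([0,T];\mathcal{C}_b^{1+\gamma}(\real^d))$ after modification.

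The key analytic input is Lemma \ref{lemma:estsemigroup}: for each term, one writes the integrand as $P_{\frac1{2H}(r-u)^{2H}}$ applied to something, and uses $\|P_\tau g\|_{W^{m,p}} \lesssim \tau^{-\delta/2}\|g\|_{W^{m-\delta,p}}$ to trade regularity for a power of $(r-u)^{2H}$, i.e.\ a factor $(r-u)^{-H\delta}$. Combined with the explicit factor $(r-u)^{H-1/2}$ present in three of the four terms, and using Assumption \ref{asm:chainrule} to control $\|b\|_{W^{k,p}}$, $\|b'\|_{W^{k-\iota,p}}$, $\|\upsilon\|$ and the Hölder/indicator bound on $\upsilon$, one obtains time integrands of the form $(r-u)^{H-1/2 - H\delta}$ (or with an extra $(r-u)^{H\iota}$ from $\upsilon$) which are integrable precisely because of the constraint $\frac1H(\frac12 - \frac2\ell - \frac1q) + k - 1 - \frac dp > 0$. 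The Lebesgue-measure term and the two $dB_j$ terms are handled similarly: for the stochastic integrals one first applies the Burkholder--Davis--Gundy inequality in $L^\ell(\Omega)$ (moving the $W^{m,p}$-norm inside via Minkowski, using $\ell\geq 2$ and $p\geq 2$), reducing to a time integral of the square of the above quantities, then Hölder in time. The need to take $\ell$-th moments rather than second moments — and to split $D_\theta b = b'\upsilon$ to separate the $\mathcal{F}_t$-adapted spatial part from the time-kernel part — is exactly what forces the quantitative form of Assumption \ref{asm:chainrule} over the weaker Assumption \ref{assumption:main}; this is the point of Remark \ref{rmk:bndAssm}.

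Two technical points deserve care. First, to apply Theorem \ref{thm:main} one needs $b$ to be an adapted random field satisfying Assumption \ref{assumption:main} with the relevant $(m,p,\alpha)$; here $\alpha$ should be taken as $\delta$ above (the number of derivatives borrowed from the semigroup), and one checks that Assumption \ref{asm:chainrule} implies $1/2 - H\alpha - 1/p > 0$ and $b\in\mathbb{D}^{1,m-\alpha,p}_p$ for a suitable choice of the free parameters $\gamma$ and $\alpha$; since $b$ may be a genuine distribution (negative $k$), one works by approximating $b$ with smooth adapted random fields and passing to the limit, which is legitimate because all the estimates are in terms of the $\mathbb{D}^{1,\cdot,p}_p$-type norms. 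Second, for the Kolmogorov criterion to yield Hölder exponent $>1/2$ one needs the moment bound $|v-u|^{\beta'\ell}$ with $\beta' - 1/\ell$ bounded below by something $>1/2$; since $\ell$ is fixed by the assumption, one must verify that the gap in the strict inequality of Assumption \ref{asm:chainrule} is enough to produce such a $\beta'$, possibly at the cost of shrinking $\gamma$ — this bookkeeping of exponents, rather than any single hard estimate, is the main obstacle, and it is where the interplay $\frac1\sigma + \frac1{\bar\sigma} = \frac1\ell$ and the two alternative bounds on $\upsilon$ in Assumption \ref{asm:chainrule}(ii) are used to keep everything in $L^\ell(\Omega)$. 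I expect the ``semigroup'' term and the $b'\upsilon$ terms to require the most delicate exponent tracking, while the first $dB_j$ term (involving $f^a = \E_t[b]$, which has the same spatial regularity as $b$) is the most straightforward.
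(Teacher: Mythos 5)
Your proposal follows essentially the same route as the paper: apply Theorem \ref{thm:main} to $b$, estimate each of the four terms of $\delta A_{s,t}$ in $L^\ell(\Omega;W^{m,p}(\real^d))$ with $m=1+d/p+\varepsilon_1$ via Lemma \ref{lemma:estsemigroup} (trading regularity for powers of $(r-u)$), BDG/Minkowski/H\"older for the stochastic integrals, the factorization $D_\theta b=b'\upsilon$ together with H\"older in $\omega$ (using $\tfrac1\sigma+\tfrac1{\bar\sigma}=\tfrac1\ell$) for the Malliavin terms, then the Sobolev embedding $W^{1+d/p+\varepsilon_1,p}\hookrightarrow\mathcal{C}_b^{1+\gamma}$ and Kolmogorov's continuity theorem. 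Your exponent bookkeeping (the slack $\varepsilon_1,\varepsilon_2$ in the strict inequality of Assumption \ref{asm:chainrule}, and the loss of $1/\ell$ in Kolmogorov) matches what the paper actually does, so the proposal is correct and not materially different.
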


\begin{proof} \noindent\textbf{Step 1:} By Assumption \ref{asm:chainrule}, there exist $\varepsilon_1,\varepsilon_2>0$ such that
\begin{equation*}
\frac1{2H} - \frac{2}{H\ell}-\frac1{Hq}+ k-1-\frac dp = \varepsilon_1 + \frac{\varepsilon_2}H.
\end{equation*}
By Theorem \ref{thm:main} and \eqref{eq:defA}, we have that, for any $x\in\mathbb{R}^d$, $\delta A_{s,t}(x)$ is given by
\begin{align*}
\delta A_{s,t}(x) =&  \int_s^t P_{\frac{1}{2H}r^{2H}} b(r,W^{H}(r)+x)dr\nonumber
\\ & +\sum_{j=1}^d \int_s^t \int_u^t P_{\frac{1}{2H}(r-u)^{2H}} \frac{\partial}{\partial x_j} b^a(r,u,W^{2,H}(u,r)+x)(r-u)^{H-1/2}dr dB_j(u)\nonumber
\\ & +\sum_{j=1}^d \int_s^t  \int_{u}^t P_{\frac{1}{2H}(r-u)^{2H}} \frac{\partial}{\partial x_j}  \Dp b_j(r,u,W^{2,H}(u,r)+x)(r-u)^{H-1/2}drdu\nonumber
\\& - \sum_{j=1}^d \int_s^t  \int_{u}^tP_{\frac{1}{2H}(r-u)^{2H}} \Dp b_j(r,u,W^{2,H}(u,r)+x)(r-u)^{H-1/2}dr dB_j(u),
\end{align*}
where we denote
\begin{equation*}
\Dp b_j(r,u,x) = (\Dp b(r,x))_j(u).
\end{equation*}
We first estimate each term from the right-hand-side in the $L^{\ell}(\Omega;W^{1+d/p+\varepsilon_1,p}(\mathbb{R}^d))$-norm.
We denote 
\begin{equation*}
m : = 1+\frac{d}{p}+\varepsilon_1 = k  + \frac1{H}\left(\frac1{2}-\frac2{\ell}-\frac1{q} - \varepsilon_2\right).
\end{equation*}
By a density argument, we can assume that $b$ is a smooth random field. For the first term, we have, thanks to Hölder's inequality and Lemma \ref{lemma:estsemigroup},
\begin{align*}
\left\| \int_s^t P_{\frac{1}{2H}r^{2H}}b(r,\cdot)dr \right\|_{W^{m,p}(\mathbb{R}^d)} &\leq \int_s^t \left\|P_{\frac{1}{2H}r^{2H}} b(r,\cdot)\right\|_{W^{m,p}(\mathbb{R}^d)}dr \lesssim \int_s^t r^{-1/2+2/\ell+1/q+\varepsilon_2} \left\| b(r,\cdot)\right\|_{W^{k,p}(\mathbb{R}^d)}dr
\\ &\lesssim (t-s)^{1/2 +2/\ell + \varepsilon_2} \|b\|_{L^{q}([0,T];W^{k,p}(\mathbb{R}^d))}.
\end{align*}
We now turn to the second term and use the BDG inequality\footnote{Burkholder-Gavis-Gundy inequality} together with Lemma \ref{lemma:estsemigroup}, to deduce that, for any $j\in\{1,\ldots,d\}$,
\begin{align*}
\E\left[ \left\|\int_s^t \int_u^tP_{\frac{1}{2H}(r-u)^{2H}}\partial_{x_j} b^a(r,u,W^{2,H}(u,r)+\cdot)(r-u)^{H-1/2}drdB_j(u)\right\|^\ell_{W^{m,p}(\mathbb{R}^d)} \right]^{1/\ell}
\\ \lesssim \E\left[ \left(\int_s^t \left(\int_u^t (r-u)^{- 1 +2/\ell + 1/q + \varepsilon_2} \|b^a(r,u,\cdot)\|_{W^{k,p}(\mathbb{R}^d)}dr\right)^2 du\right)^{\ell/2} \right]^{1/\ell} 
\\ \lesssim \left(\int_s^t (t-u)^{2/\ell+\varepsilon_2}  \|b\|_{L^\ell(\Omega;L^{q}([0,T];W^{k,p}(\mathbb{R}^d)))}^2 du\right)^{1/2} 
\\ \lesssim  (t-s)^{(1+\varepsilon_2)/2+1/\ell} \|b\|_{L^\ell(\Omega;L^{q}([0,T];W^{k,p}(\mathbb{R}^d)))}.
\end{align*}
By similar arguments, Jensen's inequality and ii) of Assumption \ref{asm:chainrule}, we can bound the fourth term. We obtain, for any $j\in\{1,\ldots,d\}$,
\begin{align*}
\E\left[ \left\|\int_s^t \int_u^tP_{\frac{1}{2H}(r-u)^{2H}} \Dp b_j(u,r,W^{2,H}(u,r)+\cdot)(r-u)^{H-1/2}drdB_j(u)\right\|^\ell_{W^{m,p}(\mathbb{R}^d)} \right]^{1/\ell}
\\ =\E\left[ \left\|\int_s^t \int_u^tP_{\frac{1}{2H}(r-u)^{2H}} \mathbb{E}_{u}[b_j'(r,W^{2,H}(u,r)+\cdot)\upsilon(u,r)] (r-u)^{H-1/2}drdB_j(u)\right\|^\ell_{W^{m,p}(\mathbb{R}^d)} \right]^{1/\ell}
\\ \lesssim \E\left[ \left(\int_s^t \left(\int_u^t \mathbb{E}_{u}\left[\left\| P_{\frac{1}{2H}(r-u)^{2H}} b_j'(r,W^{2,H}(u,r)+\cdot)\upsilon(u,r)\right\|_{W^{m,p}(\mathbb{R}^d)}\right] (r-u)^{H-1/2}dr\right)^2du\right)^{2/\ell} \right]^{1/\ell}
\\ \lesssim \E\left[ \left(\int_s^t \left(\int_u^t (r-u)^{-1+2/\ell+1/q+\varepsilon_2} \mathbb{E}_{u}\left[C_1\|b'(r,\cdot)\|_{W^{k-\iota,p}(\mathbb{R}^d)}\right]dr\right)^2 du\right)^{\ell/2} \right]^{1/\ell} 
\\ \lesssim   (t-s)^{(1+\varepsilon_2)/2+1/\ell}  \|b'\|_{L^{\sigma}(\Omega;L^{q}([0,T];W^{k-\iota,p}(\mathbb{R}^d)))}.
\end{align*}
We finally estimate the third term. We have, for any $j\in\{1,\ldots,d\}$,
\begin{align*}
\left\|\int_s^t  \int_{u}^tP_{\frac{1}{2H}(r-u)^{2H}} \partial_{x_j} \Dp b_j (r,W^{2,H}(u,r)+\cdot)(r-u)^{H-1/2}drdu\right\|_{W^{m,p}(\mathbb{R}^d)}
\\ \lesssim \int_s^t \int_{u}^t (r-u)^{-1 +2/\ell + 1/q +\varepsilon_2}\E_u\left[ C_1 \left\| b' (r,\cdot)\right\|_{W^{k-\iota,p}(\mathbb{R}^d)}\right]drdu
\\ \lesssim \int_s^t (t-u)^{2/\ell +\varepsilon_2}\E_u\left[ C_1^2\right]^{1/2} \E_u\left[\left\| b'\right\|_{L^{q}([0,T];W^{k-\iota,p}(\mathbb{R}^d))}^2\right]^{1/2}du,
\end{align*}
which leads to
\begin{align*}
\E\left[ \left\|\int_s^t  \int_{u}^tP_{\frac{1}{2H}(r-u)^{2H}} \partial_{x_j} \Dp b_j(r,u,W^{2,H}(u,r)+\cdot)(r-u)^{H-1/2}drdu\right\|_{W^{k,p}(\mathbb{R}^d)}^{\ell}\right]^{1/\ell}
\\ \lesssim  (t-s)^{1+\varepsilon_2+2/\ell} \left\| b'\right\|_{L^{\sigma}(\Omega;L^{q}([0,T];W^{k-\iota,p}(\mathbb{R}^d)))}.
\end{align*}
\noindent\textbf{Step 2:} From the Sobolev embedding
\begin{equation*}
W^{1+d/p+\varepsilon_1,p}(\mathbb{R}^d) \hookrightarrow\mathcal{C}_b^{1+\gamma}(\mathbb{R}^d),
\end{equation*}
for any $0<\gamma<\varepsilon_1$, we deduce that
\begin{equation*}
\mathbb{E}\left[ \|\delta A_{s,t}\|_{\mathcal{C}^{1+\gamma}(\mathbb{R}^d)}^\ell\right]^{1/\ell} \leq C|t-s|^{\beta+1/\ell}\left( \|b\|_{L^{\ell}(\Omega;L^q([0,T];W^{k,p}(\mathbb{R}^d)))} + \|b'\|_{L^{\sigma}(\Omega;L^q([0,T];W^{k-\iota,p}(\mathbb{R}^d)))} \right).
\end{equation*}
It follows from Kolmogorov's continuity theorem that, up to a modification,
\begin{equation*}
A\in\mathcal{C}^{\beta}([0,T];\mathcal{C}_b^{1+\gamma}(\mathbb{R}^d)).
\end{equation*}
\end{proof}

As a direct consequence from the previous proposition, it follows from Theorem \ref{thm:CauchyCatGub}, that Equation \eqref{eq:EDSbis} admits a unique solution.

\section{Proof of Theorem \ref{thm:main}}
\label{section:proofmain}
As the reader will realise, Formula (\ref{eq:mainformula}) is valid for any fixed $x$ in $\real^d$ and any pair $(s,t)$ with $0\leq s\leq t\leq T$. Hence, to avoid cumbersome notations we fix in this proof : 
$$x=0, \quad s=0, \quad t=T.$$  
Throughout this proof, $C$ will denote a generic constant that may vary from line to line. The proof is divided into several steps. For any $N$ in $\mathbb{N}^\ast$ and $i$ in $\{0,\cdots, N\}$, we set $t_i^N:=i \frac{T}{N}$. To prevent notations to become cumbersome we will often write $t_i$ instead of $t_i^N$. \\\\
\noindent
In the following we make use of the following notation : For $i$ in $\{0,\ldots,N-1\}$, and $s\geq t_{i+1}$, we set
\begin{equation}
\label{eq:incr}
\left\lbrace
\begin{array}{l}
\delta_{i,s}(W^{2,H}):=\left(\delta_{1,i,s}(W^{2,H}),\cdots,\delta_{d,i,s}(W^{2,H})\right),\\\\
\delta_{k,i,s}(W^{2,H}):=\left(W^{2,H}(t_{i+1},s) -  W^{2,H}(t_i,s)\right)_{k}, \quad k\in \{1,\ldots,d\}.\\
\end{array}
\right.
\end{equation}

\textbf{Step 1 :}
We first assume that $f$ belongs to $\mathcal{S}_{ad}$, that is there exist
$$ n \in \mathbb{N}^*, \; 0\leq \gamma_1<\gamma_2<\cdots<\gamma_n\leq T, \quad \varphi:[0,T]\times (\real^d)^n\times\real^d \to \real $$
such that 
\begin{equation}
\label{eq:defif}
f(t,y)=\varphi(t,B(\gamma_1\wedge t),\cdots,B(\gamma_n\wedge t),y), \; y\in \real^d, \quad t\in [0,T],
\end{equation}
and $\varphi(t\cdot)$ is bounded and admits bounded partial derivatives of any order which are uniformly bounded in $t$ on $[0,T]$. Hence, for any $0\leq r\leq u\leq s\leq T$, for any $\mathcal{F}_u$-measurable random variable $G$, and for any operator $\mathcal{L}_y$ of the form $\mathcal{L}_y:=\frac{\partial^k \varphi}{\prod_{i=1}^\ell \partial_{y_i}^{k_i}}$ (with $\ell \in \mathbb{N}^*$, $v\in \{0,1,\ldots,4\}$, $v_1,\cdots,v_\ell, p_1,\cdots,p_\ell$ in $\mathbb{N}$ with $\sum_{i=1}^\ell v_i^{k_i} = v$)
\begin{align*}
&\sup_{0\leq u\leq s \leq T} \sum_{j=1}^d \left|\E_u[(P_{\frac{1}{2H}(s-r)^{2H}}  D_j \mathcal{L}_y  f(s,y))(u)]_{x=G}\right|+\sup_{0\leq u\leq s \leq T} \sum_{j=1}^d \left|\E_u[(P_{\frac{1}{2H}(s-r)^{2H}} \mathcal{L}_y  f(s,y))(u)]_{y=G}\right| \\
&\leq \sum_{i=1}^n \sup_{0\leq u\leq s \leq T} \left|\frac{\partial}{\partial_{b_i}}\mathcal{L}_y  \varphi(s,b,y))\right|<\infty.
\end{align*}
Throughout this step, $C$ will denote a generic constant which may differ from line to line and which depends on : $T$, $H$, $d$ and on : 
$$ \sup_{0\leq s \leq T} \sum_{i=1}^n \left\|\frac{\partial}{\partial_{y_i}}\mathcal{L}_y\varphi(s,\cdot)\right\|_\infty<+\infty,$$
where $\mathcal {L}_y$ denotes any partial derivative of order less or equal to $4$.    
\\\\
First of all, the Clark-Ocone formula (\ref{eq:CO}) applies to the random variable $F(t)$ (defined as (\ref{eq:definitionF})) allows one to decompose for any time $t$ the random variable $F(t)$ as follows :
\begin{equation}
\label{eq:decomp1}
F(t) = \E_t\left[ F(t) \right] + \sum_{j=1}^d \int_t^T \E_u\left[ (D_jF(t))(u)\right]dB_j(u), \quad t\in [0,T].
\end{equation}  
By defintion, $F^a(t)=\E_t\left[ F(t) \right]$ and set $G(t):=\sum_{j=1}^d \int_t^T \E_u\left[ (D_j F(t))(u)\right]dB_{j}(u)$ so that 
\begin{equation}
\label{eq:decomp1}
F(t) = F^a(t) + G(t), \quad t\in [0,T].
\end{equation} 
Using Definition (\ref{eq:definitionF}) of $F$ we have for any $i$ in $\{0,\cdots,N-1\}$ that :
\begin{align}
\label{eq:Fincr1}
F(t_{i+1}) - F(t_i) &= \int_{t_{i+1}}^T P_{\frac{1}{2H}(s-t_{i+1})^{2H}}f(s,W^{2,H}(t_{i+1},s))ds - \int_{t_i}^T P_{\frac{1}{2H}(s-t_i)^{2H}}f(s,W^{2,H}(t_i,s))ds \nonumber
\\&= - \int_{t_i}^{t_{i+1}} P_{\frac{1}{2H}(s-t_i)^{2H}}f(s,W^{2,H}(t_i,s))ds \nonumber
\\ &\hspace{2em}+  \int_{t_{i+1}}^T\left[ P_{\frac{1}{2H}(s-t_{i+1})^{2H}}f(s,W^{2,H}(t_{i+1},s)) - P_{\frac{1}{2H}(s-t_i)^{2H}}f(s,W^{2,H}(t_i,s))\right]ds \nonumber
\\&= - \int_{t_i}^{t_{i+1}} P_{\frac{1}{2H}(s-t_i)^{2H}}f(s,W^{2,H}(t_i,s))ds \nonumber
\\ &\hspace{2em}+  \int_{t_{i+1}}^T\left[P_{\frac{1}{2H}(s-t_{i+1})^{2H}} - P_{\frac{1}{2H}(s-t_i)^{2H}}\right]f(s,W^{2,H}(t_{i+1},s))ds \nonumber
\\ &\hspace{2em}+  \int_{t_{i+1}}^T P_{\frac{1}{2H}(s-t_i)^{2H}}\left[f(s,W^{2,H}(t_{i+1},s)) - f(s,W^{2,H}(t_i,s)) \right]ds.
\end{align}
We aim here to use a Taylor expansion. To this end we set (using Notation (\ref{eq:incr})) : 
\begin{equation}
\label{eq:inctheta}
W^{2,H}(t_i,s,\theta) := W^{2,H}(t_i,s) +\theta \; \delta_{i,s}(W^{2,H}), \quad \theta \in [0,1].
\end{equation}
With this notation at hand, the last term in this expression writes as follows :
\begin{align*}
&P_{\frac{1}{2H}(s-t_i)^{2H}} f(s,W^{2,H}(t_{i+1},s)) - P_{\frac{1}{2H}(s-t_i)^{2H}} f(s,W^{2,H}(t_i,s)) \\
&= \nabla P_{\frac{1}{2H}(s-t_i)^{2H}} f(s,W^{2,H}(t_i,s)) \cdot \delta_{i,s}(W^{2,H})\\
&+ \frac 1 2 \sum_{k=1}^d \frac{\partial^2}{\partial x_k^2} P_{\frac{1}{2H}(s-t_i)^{2H}} f(s,W^{2,H}(t_i,s)) \left(\delta_{i,k,s}(W^{2,H})\right)^2\\
&+ \frac 1 2 \sum_{k,\ell=1; k\neq \ell}^d \frac{\partial^2}{\partial x_k \partial x_\ell} P_{\frac{1}{2H}(s-t_i)^{2H}} f(s,W^{2,H}(t_i,s)) \delta_{i,k,s}(W^{2,H}) \delta_{i,\ell,s}(W^{2,H})\\
&+ \frac 1 6 \int_{0}^1\nabla^3 P_{\frac{1}{2H}(s-t_i)^{2H}} f\left(s,W^{2,H}(t_i,s,\theta)\right) d\theta \cdot \left(\delta_{i,s}(W^{2,H})\right)^3.
\end{align*}
To proceed with our analysis we apply the Clark-Ocone formula (\ref{eq:CO}) to each element $$\mathcal{L} P_{\frac{1}{2H}(s-t_i)^{2H}} f(s,W^{2,H}(t_i,s))$$ with $\mathcal{L}=\frac{\partial}{\partial y_k}$ (for $k$ in $\{1,\cdots,d\}$) or $\mathcal{L}=\frac{\partial^2}{\partial y_k \partial_{y_\ell}}$ for $k,\ell$ in $\{1,\cdots,d\}$ with $k\neq \ell$. We have 
\begin{align*}
&\mathcal{L} P_{\frac{1}{2H}(s-t_i)^{2H}} f(s,W^{2,H}(t_i,s)) \\
&= \E_{t_i}\left[\mathcal{L} P_{\frac{1}{2H}(s-t_i)^{2H}} f(s,W^{2,H}(t_i,s))\right] + \sum_{j=1}^d \int_{t_i}^s \E_u\left[D_j \left(\mathcal{L} P_{\frac{1}{2H}(s-t_i)^{2H}}f(s,W^{2,H}(t_i,s))\right)(u)\right] dB_j(u).
\end{align*}
Since $W^{2,H}(t_i,s)$ is $\mathcal{F}_{t_i}$-measurable, the first term of the right hand side is : 
$$\E_{t_i}\left[\mathcal{L} P_{\frac{1}{2H}(s-t_i)^{2H}} f(s,W^{2,H}(t_i,s))\right] = \mathcal{L} P_{\frac{1}{2H}(s-t_i)^{2H}} f^a(s,t_i,W^{2,H}(t_i,s)),$$ 
whereas Lemma \ref{lemma:propMalliavin} implies that : 
$$ \left(\E_u\left[D_j \left(\mathcal{L} P_{\frac{1}{2H}(s-t_i)^{2H}}f(s,W^{2,H}(t_i,s))\right)(u)\right]\right)_{t_i\leq u\leq s} = \left(\mathcal{L} P_{\frac{1}{2H}(s-t_i)^{2H}}g_j(s,u,W^{2,H}(t_i,s))\right)_{t_i\leq u\leq s},$$
where the equality is understood as processes in $L^2(\Omega\times [0,T])$ and where we recall Notation (\ref{eq:notationproof}). Hence 
\begin{align*}
&\mathcal{L} P_{\frac{1}{2H}(s-t_i)^{2H}} f(s,W^{2,H}(t_i,s)) \\
&= \mathcal{L} P_{\frac{1}{2H}(s-t_i)^{2H}} f^a(s,t_i,W^{2,H}(t_i,s)) + \sum_{j=1}^d \int_{t_i}^s \mathcal{L} P_{\frac{1}{2H}(s-t_i)^{2H}}g_j(s,u,W^{2,H}(t_i,s)) dB_j(u).
\end{align*}

Thus
\begin{align*}
&P_{\frac{1}{2H}(s-t_i)^{2H}} f(s,W^{2,H}(t_{i+1},s)) - P_{\frac{1}{2H}(s-t_i)^{2H}} f(s,W^{2,H}(t_i,s)) \\
&= \sum_{k=1}^d \frac{\partial}{\partial x_k} P_{\frac{1}{2H}(s-t_i)^{2H}} f^a(s,t_i,W^{2,H}(t_i,s)) \; \delta_{k,i,s}(W^{2,H})
\\ &\hspace{1em} + \sum_{k=1}^d \sum_{j=1}^d \int_{t_i}^{t_{i+1}} \frac{\partial}{\partial x_k} P_{\frac{1}{2H}(s-t_i)^{2H}} g_j(s,u,W^{2,H}(t_i,s)) dB_j(u) \; \delta_{k,i,s}(W^{2,H})
\\ &\hspace{1em} + \sum_{k=1}^d \sum_{j=1}^d \int_{t_{i+1}}^{s} \frac{\partial}{\partial x_k} P_{\frac{1}{2H}(s-t_i)^{2H}} g_j(s,u,W^{2,H}(t_i,s)) dB_j(u) \; \delta_{k,i,s}(W^{2,H})
\\ &\hspace{1em} + \frac12  \sum_{k=1}^d \frac{\partial^2}{\partial x_k^2} P_{\frac{1}{2H}(s-t_i)^{2H}} f(s,W^{2,H}(t_i,s)) dB_j(u) \; \left(\delta_{k,i,s}(W^{2,H})\right)^2 
\\ &\hspace{1em} + \frac12 \sum_{k,\ell=1;k\neq \ell}^d\frac{\partial^2}{\partial x_k \partial x_\ell} P_{\frac{1}{2H}(s-t_i)^{2H}} f^a(s,t_i,W^{2,H}(t_i,s)) dB_j(u) \; \delta_{k,i,s}(W^{2,H}) \delta_{\ell,i,s}(W^{2,H})
\\ &\hspace{1em} + \frac12  \sum_{k,\ell=1;k\neq \ell}^d \sum_{j=1}^d \int_{t_i}^{t_{i+1}}\frac{\partial^2}{\partial x_k \partial x_\ell} P_{\frac{1}{2H}(s-t_i)^{2H}} g_j(s,u,W^{2,H}(t_i,s)) dB_j(u) \; \delta_{k,i,s}(W^{2,H}) \delta_{\ell,i,s}(W^{2,H})
\\ &\hspace{1em} + \frac12 \sum_{k,\ell=1;k\neq \ell}^d \sum_{j=1}^d \int_{t_{i+1}}^{s}\frac{\partial^2}{\partial x_k \partial x_\ell} P_{\frac{1}{2H}(s-t_i)^{2H}} g_j(s,u,W^{2,H}(t_i,s)) dB_j(u) \; \delta_{k,i,s}(W^{2,H}) \delta_{\ell,i,s}(W^{2,H})
\\ &\hspace{1em}+ \frac 1 6 \int_0^1 \nabla^3 P_{\frac{1}{2H}(s-t_i)^{2H}} f\left(s,W^{2,H}(t_i,s,\theta)\right) d\theta \cdot \left(\delta_{i,s}(W^{2,H})\right)^3.
\end{align*}
Coming back to the expression (\ref{eq:Fincr1}) of an increment of $F$ we obtain 
\begin{align}
\label{eq:decF2}
& F(t_{i+1}) - F(t_i) \nonumber\\
&= - \int_{t_i}^{t_{i+1}} P_{\frac{1}{2H}(s-t_i)^{2H}}f(s,W^{2,H}(t_i,s))ds\nonumber
\\ &\hspace{2em}+  \int_{t_{i+1}}^T\left[P_{\frac{1}{2H}(s-t_{i+1})^{2H}} - P_{\frac{1}{2H}(s-t_i)^{2H}}\right]f(s,W^{2,H}(t_{i+1},s))ds\nonumber
\\ &\hspace{2em}+ \sum_{k=1}^d \int_{t_{i+1}}^T \frac{\partial}{\partial x_k} P_{\frac{1}{2H}(s-t_i)^{2H}} f^a(s,t_i,W^{2,H}(t_i,s)) \delta_{k,i,s}(W^{2,H}) ds\nonumber
\\ &\hspace{2em} + \sum_{k=1}^d \sum_{j=1}^d \int_{t_{i+1}}^T \int_{t_i}^{t_{i+1}} \frac{\partial}{\partial x_k} P_{\frac{1}{2H}(s-t_i)^{2H}} g_j(s,u,W^{2,H}(t_i,s)) dB_j(u) \; \delta_{k,i,s}(W^{2,H}) ds\nonumber
\\ &\hspace{2em} + \sum_{k=1}^d \sum_{j=1}^d \int_{t_{i+1}}^T \int_{t_{i+1}}^{s} \frac{\partial}{\partial x_k} P_{\frac{1}{2H}(s-t_i)^{2H}} g_j(s,u,W^{2,H}(t_i,s)) dB_j(u) \; \delta_{k,i,s}(W^{2,H}) ds\nonumber
\\ &\hspace{2em} + \frac12 \sum_{k=1}^d \int_{t_{i+1}}^T \frac{\partial^2}{\partial x_k^2} P_{\frac{1}{2H}(s-t_i)^{2H}} f(s,W^{2,H}(t_i,s)) dB_j(u) \; \left(\delta_{k,i,s}(W^{2,H})\right)^2 ds\nonumber
\\ &\hspace{2em} + \frac12 \sum_{k,\ell=1;k\neq \ell}^d \int_{t_{i+1}}^T \frac{\partial^2}{\partial x_k \partial x_\ell} P_{\frac{1}{2H}(s-t_i)^{2H}} f^a(s,t_i,W^{2,H}(t_i,s)) \; \delta_{k,i,s}(W^{2,H}) \delta_{\ell,i,s}(W^{2,H}) ds\nonumber
\\ &\hspace{2em} + \frac12 \sum_{k,\ell=1;k\neq \ell}^d \sum_{j=1}^d \int_{t_{i+1}}^T \int_{t_{i}}^{\tio}\frac{\partial^2}{\partial x_k \partial x_\ell} P_{\frac{1}{2H}(s-t_i)^{2H}} g_j(s,u,W^{2,H}(t_i,s)) dB_j(u) \; \delta_{k,i,s}(W^{2,H}) \delta_{\ell,i,s}(W^{2,H}) ds\nonumber
\\ &\hspace{2em} + \frac12 \sum_{k,\ell=1;k\neq \ell}^d \sum_{j=1}^d \int_{t_{i+1}}^T \int_{t_{i+1}}^{s}\frac{\partial^2}{\partial x_k \partial x_\ell} P_{\frac{1}{2H}(s-t_i)^{2H}} g_j(s,u,W^{2,H}(t_i,s)) dB_j(u) \; \delta_{k,i,s}(W^{2,H}) \delta_{\ell,i,s}(W^{2,H}) ds\nonumber
\\ &\hspace{2em}+  \frac16 \int_{t_{i+1}}^T \int_0^1 \nabla^3 P_{\frac{1}{2H}(s-t_i)^{2H}} f\left(s,W^{2,H}(t_i,s,\theta)\right) d\theta \cdot \left(\delta_{i,s}(W^{2,H})\right)^3 ds\nonumber
\\ & =: \sum_{k = 1}^{10}  I_{1,k}(t_i,t_{i+1}).
\end{align}
We now compute an increment of $G$. To this end we first remark that (recall Notation in (\ref{eq:CO}))
\begin{align}
G(t) &: = \sum_{j=1}^d \int_t^T (\Dp_j F(t))(u) dB_{j}(u) \nonumber\\
&= \sum_{j=1}^d \int_t^T \E_u\left[ D_j \left(\int_t^T P_{\frac{1}{2H}(s-t)^{2H}}f(s,W^{2,H}(t,s))ds\right)(u) \right]dB_{j}(u)\nonumber\\ 
&= \sum_{j=1}^d \int_t^T \int_t^T \E_u\left[D_j \left(P_{\frac{1}{2H}(s-t)^{2H}} f(s,W^{2,H}(t,s))\right)(u) \bold{1}_{\{u\leq s\}}\right]ds dB_j(u) \nonumber\\ 
&= \sum_{j=1}^d \int_t^T \int_u^T \Dp_j \left(P_{\frac{1}{2H}(s-t)^{2H}} f(s,W^{2,H}(t,s))\right)(u) ds dB_j(u),
\end{align}
where the first equality is a consequence of the stochastic Fubini theorem as for any $j$ in $\{1,\ldots,d\}$
\begin{align*}
&\int_t^T \E\left[\left|\E_u\left[ D_j\left(\int_{u}^T P_{\frac{1}{2H}(s-t)^{2H}}f(s,W^{2,H}(t,s))ds\right)(u) \right]\right|^2\right]du\\
&\int_t^T \int_{u}^T \E\left[\left|D_j\left(P_{\frac{1}{2H}(s-t)^{2H}}f(s,W^{2,H}(t,s))\right)(u)\right|^2\right]du ds\\
&\leq C \int_t^T \left|T-u\right|^2du <+\infty.
\end{align*}
In addition, since for any $t$, $W^{2,H}(t,s)$ is $\mathcal{F}_t$-measurable, Lemma \ref{lemma:propMalliavin} implies that 
$$ \left(\Dp \left(P_{\frac{1}{2H}(s-t)^{2H}} f(s,W^{2,H}(t,s))\right)(u) \right)_u = \left(P_{\frac{1}{2H}(s-t)^{2H}} g(s,u,W^{2,H}(t,s))\right)_u.$$
Thus,
$$ G(t) = \sum_{j=1}^d \int_t^T \int_u^T P_{\frac{1}{2H}(s-t)^{2H}} g(s,u,W^{2,H}(t,s)) ds dB_j(u).$$
This form allows us to proceed in the analysis of an increment of $G$. Indeed,
\begin{align*}
&G(t_{i+1})-G(t_i) \nonumber\\
&= \sumj \int_{t_{i+1}}^T \int_u^T \left[ P_{\frac{1}{2H}(s-t_{i+1})^{2H}} g_j(s,u,W^{2,H}(t_{i+1},s)) - P_{\frac{1}{2H}(s-t_{i})^{2H}} g_j(s,u,W^{2,H}(t_{i},s)) \right]ds dB_j(u)
\\ &\hspace{2em} - \sumj \int_{t_i}^{t_{i+1}} \int_u^T P_{\frac{1}{2H}(s-t_{i})^{2H}} g_j(s,u,W^{2,H}(t_{i},s)) ds dB_j(u)\\
&= \sumj \int_{t_{i+1}}^T \int_u^T \left[ P_{\frac{1}{2H}(s-t_{i+1})^{2H}} g_j(s,u,W^{2,H}(t_{i+1},s)) - P_{\frac{1}{2H}(s-t_{i})^{2H}} g_j(s,u,W^{2,H}(t_{i+1},s)) \right]ds dB_j(u)\\
&+ \sumj \int_{t_{i+1}}^T \int_u^T \left[ P_{\frac{1}{2H}(s-t_{i})^{2H}} g_j(s,u,W^{2,H}(t_{i+1},s)) - P_{\frac{1}{2H}(s-t_{i})^{2H}} g_j(s,u,W^{2,H}(t_{i},s)) \right]ds dB_j(u)\\
&\hspace{2em} - \sumj \int_{t_i}^{t_{i+1}} \int_u^T P_{\frac{1}{2H}(s-t_{i})^{2H}} g_j(s,u,W^{2,H}(t_{i},s)) ds dB_j(u)
\end{align*}
In a similar fashion than the computation of an increment of $F$, we expand using Taylor expansion the second term to obtain
\begin{align*}
&P_{\frac{1}{2H}(s-t_{i})^{2H}} g_j(s,u,W^{2,H}(t_{i+1},s)) - P_{\frac{1}{2H}(s-t_{i})^{2H}} g_j(s,u,W^{2,H}(t_{i},s))\\
&=\nabla P_{\frac{1}{2H}(s-t_{i})^{2H}} g_j(s,u,W^{2,H}(t_{i},s)) \cdot \delta_{i,s}(W^{2,H}) + \frac12 \nabla^2 P_{\frac{1}{2H}(s-t_{i})^{2H}} g_j(s,u,W^{2,H}(t_{i},s)) \cdot \left(\delta_{i,s}(W^{2,H})\right)^2\\
&\hspace{2em}+ \frac16 \int_0^1 \nabla^3 P_{\frac{1}{2H}(s-t_{i})^{2H}} g_j(s,u,W^{2,H}(t_i,s,\theta)) d\theta \cdot \left(\delta_{i,s}(W^{2,H})\right)^3,
\end{align*}
where we recall Notation (\ref{eq:inctheta}). Plugging this expansion in the expression above, we get 
\begin{align}
\label{eq:decG1}
&G(t_{i+1})-G(t_i) \nonumber\\
&= \sumj \int_{t_{i+1}}^T \int_u^T \left[P_{\frac{1}{2H}(s-t_{i+1})^{2H}} g_j(s,u,W^{2,H}(t_{i+1},s)) - P_{\frac{1}{2H}(s-t_{i})^{2H}} g_j(s,u,W^{2,H}(t_{i+1},s)) \right]ds dB_j(u) \nonumber\\
&+ \sumj \int_{t_{i+1}}^T \int_u^T \left[ \nabla P_{\frac{1}{2H}(s-t_{i})^{2H}} g_j(s,u,W^{2,H}(t_{i},s)) \cdot \delta_{i,s}(W^{2,H})) \right]ds dB_j(u)\nonumber\\
&+ \frac12 \sumj \int_{t_{i+1}}^T \int_u^T \left[ \nabla^2 P_{\frac{1}{2H}(s-t_{i})^{2H}} g_j(s,u,W^{2,H}(t_{i},s)) \cdot \left(\delta_{i,s}(W^{2,H})\right)^2) \right]ds dB_j(u)\nonumber\\
&+ \frac16 \sumj \int_{t_{i+1}}^T \int_u^T \left[ \int_0^1 \nabla^3 P_{\frac{1}{2H}(s-t_{i})^{2H}} g_j(s,u,W^{2,H}(t_i,s,\theta)) d\theta \cdot \left(\delta_{i,s}(W^{2,H})\right)^3) \right]ds dB_j(u)\nonumber\\
&\hspace{2em} - \sumj \int_{t_i}^{t_{i+1}} \int_u^T P_{\frac{1}{2H}(s-t_{i})^{2H}} g_j(s,u,W^{2,H}(t_{i},s)) ds dB_j(u) \nonumber\\ 
& =: \sum_{k = 1}^5 I_{2,k}(t_i,t_{i+1}).
\end{align}
As a consequence, using Relation (\ref{eq:decomp1}) with $t=0$, we get : 
\begin{align}
\label{eq:sumofterms}
F^a(0) &= F(0)-G(0)\nonumber\\
&= -\lim_{N\to+\infty} \sum_{i=0}^{N-1} \left(F(t_{i+1})-F(t_{i})-(G(t_{i+1})-G(t_{i}))\right) \nonumber\\
&= -\lim_{N\to+\infty} \sum_{i=0}^{N-1} \left(\sum_{k = 1}^8 I_{1,k}(t_i,t_{i+1}) - \sum_{k = 1}^5 I_{2,k}(t_i,t_{i+1})\right) \nonumber\\
&=-\lim_{N\to+\infty} \sum_{i=0}^{N-1} \left(I_{1,1}(t_i,t_{i+1})+ I_{1,3}(t_i,t_{i+1}) + I_{1,4}(t_i,t_{i+1})(t_i,t_{i+1}) - I_{2,5}(t_i,t_{i+1}) \right) \\
&+ \lim_{N\to+\infty} \sum_{i=0}^{N-1} R(t_i,t_{i+1})\nonumber,
\end{align}
with $$R(t_i,t_{i+1}):= I_{1,2}(t_i,t_{i+1}) + I_{1,5}(t_i,t_{i+1}) + \sum_{k=6}^{10} I_{1,k}(t_i,t_{i+1}) - \sum_{k=1}^4 I_{2,k}(t_i,t_{i+1}),$$
where the terms involved in this expression are defined in (\ref{eq:decF2}) and in (\ref{eq:decG1}).\\\\
\noindent
By Lemma \ref{lemma:tec1} (postponed at the end of this section), we have that
\begin{equation}
\label{eq:I11}
\lim_{N\to+\infty} \sum_{i=0}^{N-1} I_{1,1}(t_i,t_{i+1}) = - \int_0^T f(t,W^H_t)dt,
\end{equation}
\begin{equation}
\label{eq:I13}
\lim_{N\to+\infty} \sum_{i=0}^{N-1} I_{1,3}(t_i,t_{i+1}) = \sum_{k=1}^d \int_0^T \int_t^TP_{\frac{1}{2H}(s-t)^{2H}}\nabla f^a_{t}(s,W^{2,H}(t,s))(s-t)^{H-1/2}ds \cdot dB(t),
\end{equation}
\begin{equation}
\label{eq:I14}
\lim_{N\to+\infty} \sum_{i=0}^{N-1} I_{1,4}(t_i,t_{i+1})(t_i,t_{i+1}) = \sum_{j=1}^d \int_0^T  \int_{t}^T \frac{\partial}{\partial x_j} P_{\frac{1}{2H}(s-t)^{2H}} g_j(s,t,W^{2,H}(t,s)) (s-t)^{H-1/2}dsdt,
\end{equation}
\begin{equation}
\label{eq:I24}
\lim_{N\to+\infty} \sum_{i=0}^{N-1} I_{2,5}(t_i,t_{i+1}) = -\int_0^T \int_u^T P_{\frac{1}{2H}(s-t)^{2H}} g_j(s,t,W^{2,H}(t,s)) ds dB_j(t),
\end{equation}
and that 
\begin{equation}
\label{eq:R}
\lim_{N\to+\infty} \sum_{i=0}^{N-1} R(t_i,t_{i+1}) = 0.
\end{equation}
\textbf{Step 2 :}\\\\
%%%%% Laure à lire de là à %%%%%
\noindent
In a first step, we have proved Formula (\ref{eq:mainformula}) for $f$ in $\mathcal{S}_{ad}$ for any $(s,t,x)$ in $[0,T]^2\times\real^d$ ($s\leq t$). We now extend it to any element $f$ in $\mathbb{D}^{1,m-\alpha,p}_p$. To this end, we set the operators : 
$$
\left\lbrace
\begin{array}{lll}
\mathcal{A}_{LHS} : &\mathbb{D}^{1,m-\alpha,p}_p &\to L^\infty([0,T];L^p(\Omega; W^{m,p}(\real^d)))\\
&f &\mapsto \left(\mathcal{A}_{LHS}(t,x)\right)_{t\in[0,T],x\in \real^d},
\end{array}
\right.
$$
$$ \mathcal{A}_{LHS}(t,x):=\int_0^t f(r,W_r^H+x) dr;$$
and
$$
\left\lbrace
\begin{array}{lll}
\mathcal{A}_{RHS} : &\mathcal{S}_{ad} &\to L^\infty([0,T];L^p(\Omega; W^{m,p}(\real^d)))\\
&f &\mapsto \left(\mathcal{A}_{RHS}(t,x)\right)_{t\in[0,T],x\in \real^d},
\end{array}
\right.
$$
with 
\begin{align}
\label{eq:mainformulabis}
\mathcal{A}_{RHS}(f)(t,x): =&  \int_0^t P_{\frac{1}{2H}r^{2H}} f(r,W^{H}(r)+x)dr\nonumber
\\ & +\sum_{j=1}^d \int_0^t \int_u^t P_{\frac{1}{2H}(r-u)^{2H}} \frac{\partial}{\partial x_j} f^a(r,u,W^{2,H}(u,r)+x)(r-u)^{H-1/2}dr dB_j(u)\nonumber
\\ & +\sum_{j=1}^d \int_0^t  \int_{u}^t P_{\frac{1}{2H}(r-u)^{2H}} \frac{\partial}{\partial x_j} g_j(r,u,W^{2,H}(u,r)+x)(r-u)^{H-1/2}drdu\nonumber
\\& - \sum_{j=1}^d \int_0^t  \int_{u}^t P_{\frac{1}{2H}(r-u)^{2H}} g_j(r,u,W^{2,H}(u,r)+x)(r-u)^{H-1/2}dr dB_j(u).
\end{align}
In Step 1, we have proved that for any $f$ in $\mathcal{S}_{ad}$

$$ \mathcal{A}_{LHS}=\mathcal{A}_{RHS}, \, \textrm{ in } L^\infty([0,T];L^p(\Omega; W^{m,p}(\real^d)). $$
Note also that by definition, 
$$ \|\mathcal{A}_{LHS}(f)\| \leq \|f\|_{L^\infty([0,T];L^p(\Omega; W^{m,p}(\real^d)))}. $$
So Formula (\ref{eq:mainformula}) holds true for any adapted random field $f$ in $\mathbb{D}^{1,m-\alpha,p}_p$ (that is the equality of the operators $\mathcal{A}_{LHS}$ and $\mathcal{A}_{RHS}$) is we prove that $\mathcal{A}_{RHS}$ is a well-defined bounded operator on $\mathbb{D}^{1,m-\alpha,p}_p$. We thus prove that for any adapted random field $f$ in $\mathbb{D}^{1,m-\alpha,p}_p$ we have that :
\begin{equation}
\label{eq:boundop}
\|\mathcal{A}_{RHS}(f)\|_{L^\infty([0,T];L^p(\Omega; W^{m,p}(\real^d)))} \lesssim \|f\|_{\mathbb{D}^{1,m-\alpha,p}_p}.
\end{equation}
\textbf{Proof of (\ref{eq:boundop}) :\\} We remark that the following estimates are different from the ones in the proof of Theorem \ref{thm:CauchySDE} (see Remark \ref{rmk:bndAssm}).
Let $f$ be an adapted random field in $\mathbb{D}^{1,m-\alpha,p}_p$. 
%That is, for any $t\in[0,T]$, we have
%\begin{align}
%\label{eq:mainformulabis}
%\int_0^t f(r,W^H_r+x)dr =&  \int_0^t P_{\frac{1}{2H}r^{2H}} f(r,W^{H}(r)+x)dr\nonumber
%\\ & +\sum_{j=1}^d \int_0^t \int_u^t P_{\frac{1}{2H}(r-u)^{2H}} \frac{\partial}{\partial x_j} f^a(r,u,W^{2,H}(u,r)+x)(r-u)^{H-1/2}dr dB_j(u)\nonumber
%\\ & +\sum_{j=1}^d \int_0^t  \int_{u}^t P_{\frac{1}{2H}(r-u)^{2H}} \frac{\partial}{\partial x_j} g_j(r,u,W^{2,H}(u,r)+x)(r-u)^{H-1/2}drdu\nonumber
%\\& - \sum_{j=1}^d \int_0^t  \int_{u}^t P_{\frac{1}{2H}(r-u)^{2H}} g_j(r,u,W^{2,H}(u,r)+x)(r-u)^{H-1/2}dr dB_j(u).
%\end{align}
We now estimate each term in the $L^{\infty}([0,T];L^{p}(\Omega;W^{m,p}(\mathbb{R}^d)))$ space with $p\geq 2$ and $1/2-H\alpha-1/p>0$. For the first term, we have, by H\"older's inequality,
\begin{align*}
\left\| \int_0^t P_{\frac{1}{2H}r^{2H}}f(r,\cdot)dr \right\|_{L^{p}(\Omega;W^{m,p}(\mathbb{R}^d))} &\leq \int_0^t \left\|P_{\frac{1}{2H}r^{2H}} f(r,\cdot)\right\|_{L^{p}(\Omega;W^{m,p}(\mathbb{R}^d))}dr 
\\&\lesssim \int_0^t r^{-H\alpha} \left\| f(r,\cdot)\right\|_{L^{p}(\Omega;W^{m-\alpha,p}(\mathbb{R}^d))}dr
\\ &\lesssim t^{1-H\alpha - 1/p}  \|f\|_{L^{p}([0,T]\times\Omega;W^{m-\alpha,p}(\mathbb{R}^d)))},
\end{align*}
which yields
\begin{equation*}
\left\| \int_0^{\cdot} P_{\frac{1}{2H}r^{2H}}f(r,\cdot)dr \right\|_{L^{\infty}([0,T];L^{p}(\Omega;W^{m,p}(\mathbb{R}^d)))} \lesssim T^{1-H\alpha-1/p} \|f\|_{L^{q}([0,T]\times\Omega;W^{m-\alpha,p}(\mathbb{R}^d)))}.
\end{equation*}
We now turn to the second term. It follows from the BDG, Minkowski and H\"older inequalities that, for any $j\in\{1,\ldots,d\}$,
\begin{align*}
\left\|\int_0^t \int_u^t P_{\frac{1}{2H}(r-u)^{2H}} \frac{\partial}{\partial x_j} f^a(r,u,W^{2,H}(u,r)+x)(r-u)^{H-1/2}dr dB_j(u)\right\|_{L^{p}(\Omega;W^{m,p}(\mathbb{R}^d))}
\\ \lesssim \left( \int_0^t\left\|\int_u^t (r-u)^{-1/2-H\alpha}\mathbb{E}_u\left[\|f(r,\cdot)\|_{W^{m-\alpha,p}(\mathbb{R}^d)}\right] dr\right\|_{L^{p}(\Omega)}^2du\right)^{1/2}
\\ \lesssim \left( \int_0^t (t-u)^{1-2H\alpha-2/q}\|f\|_{L^p([0,T]\times\Omega;W^{m-\alpha,p}(\mathbb{R}^d)))}^2du\right)^{1/2}
\\ \lesssim T^{1-H\alpha-1/q}\|f\|_{L^q([0,T];L^{p}(\Omega;W^{m-\alpha,p}(\mathbb{R}^d)))}.
\end{align*}
By rather similar arguments, we estimate the fourth term as
\begin{align*}
\left\|\int_0^t \int_u^t P_{\frac{1}{2H}(r-u)^{2H}} g_j^a(r,u,W^{2,H}(u,r)+x)(r-u)^{H-1/2}dr dB_j(u)\right\|_{L^{p}(\Omega;W^{m,p}(\mathbb{R}^d))}
\\ \lesssim \left( \int_0^t\left\|\int_u^t (r-u)^{-1/2-H(\alpha-1)}\mathbb{E}_u\left[\|g_j(r,u,\cdot)\|_{W^{m-\alpha,p}(\mathbb{R}^d)}\right] dr\right\|_{L^{p}(\Omega)}^2du\right)^{1/2}
\\ \lesssim \left( \int_0^t (t-u)^{1-2H(\alpha-1)-2/p}\|g_j(,\cdot,u,\cdot)\|_{L^p([0,T]\times\Omega;W^{m-\alpha,p}(\mathbb{R}^d)))}^2du\right)^{1/2}
\\ \lesssim T^{1-H(\alpha-1)-3/(2p)}\|g_j\|_{L^p([0,T]^2\times\Omega;W^{m-\alpha,p}(\mathbb{R}^d)))}.
\end{align*}
Finally, we have, for the third term,
\begin{align*}
\left\|\int_0^t  \int_{u}^t P_{\frac{1}{2H}(r-u)^{2H}} \frac{\partial}{\partial x_j} g_j(r,u,W^{2,H}(u,r)+x)(r-u)^{H-1/2}drdu\right\|_{L^p(\Omega;W^{m,p}(\mathbb{R}^d))}
\\ \lesssim \int_0^t  \int_{u}^t (r-u)^{-1/2-H\alpha} \left\|g_j(r,u,\cdot)\right\|_{L^p(\Omega;W^{m,p}(\mathbb{R}^d))}drdu
\\ \lesssim T^{3/2 - H\alpha - 2/p}\|g_j\|_{L^p([0,T]^2\times\Omega;W^{m-\alpha,p}(\mathbb{R}^d)))}.
\end{align*}
Since each term in (\ref{eq:mainformula}) is linear with respect to $f$ and from each of the previous estimates, we can deduce that Formula (\ref{eq:mainformula}) is in force for any $f$ in $\mathbb{D}^{1,m-\alpha,p}_p$.
%%%% Laure jusque là %%%%%%
\hfill$\square$\\\par\medbreak

\begin{lemma}
\label{lemma:tec1}
With the notations of the proof of Theorem \ref{thm:main}, the convergences (\ref{eq:I11})-(\ref{eq:I24}) hold true in $L^2(\Omega)$:
\begin{itemize}
\item[(i)]
$$ \lim_{N\to+\infty} \sum_{i=0}^{N-1} I_{1,1}(t_i,t_{i+1}) = - \int_0^T f(t,W^H_t)dt. $$
\item[(ii)]
$$ \lim_{N\to+\infty} \sum_{i=0}^{N-1} I_{1,3}(t_i,t_{i+1}) = \int_0^T \int_t^TP_{\frac{1}{2H}(s-t)^{2H}}\nabla f^a_{t}(s,W^{2,H}(t,s))(s-t)^{H-1/2}ds \cdot dB(t). $$
\item[(iii)] 
$$ \lim_{N\to+\infty} \sum_{i=0}^{N-1} I_{1,4}(t_i,t_{i+1})(t_i,t_{i+1}) = \sum_{j=1}^d \int_0^T  \int_{t}^T \frac{\partial}{\partial x_j} P_{\frac{1}{2H}(s-t)^{2H}} g_j(s,t,W^{2,H}(t,s)) (s-t)^{H-1/2}dsdt, $$
(also see Remark \ref{rem:MallDiv} for this term).
\item[(iv)]
$$\lim_{N\to+\infty} \sum_{i=0}^{N-1} I_{2,5}(t_i,t_{i+1}) = -\sum_{j=1}^d \int_0^T \int_u^T P_{\frac{1}{2H}(s-t)^{2H}} g_j(s,t,W^{2,H}(t,s)) ds dB_j(t).$$
\end{itemize}
\end{lemma}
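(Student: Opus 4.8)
The plan is to treat each of the four convergences by the same two‑step recipe: first rewrite $\sum_{i=0}^{N-1} I_{\cdot,\cdot}(t_i,t_{i+1})$ so that it becomes either a Riemann sum of an ordinary $ds$‑integral, or a Riemann sum of a \emph{single} It\^o integral (after a stochastic Fubini), or a product of two It\^o integrals over the current mesh cell $[t_i,t_{i+1}]$; then pass to the limit by dominated convergence, using the It\^o isometry to upgrade $L^2(\Omega\times[0,T])$‑convergence of integrands to $L^2(\Omega)$‑convergence of the stochastic integrals. Two elementary facts are used throughout. First, for $0\le t_i\le t_{i+1}\le s$ one has $\delta_{k,i,s}(W^{2,H})=\int_{t_i}^{t_{i+1}}(s-v)^{H-1/2}dB_k(v)$, which turns the fBm increment into an It\^o integral of a deterministic kernel over $[t_i,t_{i+1}]$; in particular $\E_{t_i}[|\delta_{k,i,s}(W^{2,H})|^2]=\int_{t_i}^{t_{i+1}}(s-v)^{2H-1}dv$ is $O(1/N)$ uniformly, and odd conditional moments vanish. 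Second, the coefficients $f^a(s,t_i,W^{2,H}(t_i,s))$ and $g_j(s,u,W^{2,H}(t_i,s))$ (for $u\ge t_i$) are $\mathcal F_{t_i}$‑, resp.\ $\mathcal F_u$‑measurable, so they may be pulled inside the relevant $dB$‑integral, and as $N\to\infty$ they converge — by continuity of $t\mapsto W^{2,H}(t,s)$ and, for the conditional expectations, continuity of the Brownian martingale $t\mapsto\E_t[f(s,x)]$ — to $f^a(s,u,W^{2,H}(u,s))$, resp.\ $g_j(s,u,W^{2,H}(u,s))$. Since $f\in\mathcal S_{ad}$, $f$ together with all its spatial derivatives and $g_j=\Dp f$ are bounded, so every domination below reduces to the integrability of $(s-v)^{H-1/2}$ near $v=s$, which holds as $H>0$ (Assumption~\ref{assumption:main} is not needed here, only in Step~2 of the proof of Theorem~\ref{thm:main}).

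For (i), summation gives $\sum_i I_{1,1}(t_i,t_{i+1})=-\int_0^T P_{\frac1{2H}(s-\kappa_N(s))^{2H}}f(s,W^{2,H}(\kappa_N(s),s))ds$, with $\kappa_N(s)$ the left endpoint of the mesh cell containing $s$; since $s-\kappa_N(s)\le T/N\to0$, the heat semigroup tends strongly to the identity (uniformly, by boundedness and uniform spatial Lipschitz continuity of $f$) and $W^{2,H}(\kappa_N(s),s)\to W^{2,H}(s,s)=W^H_s$ a.s., so dominated convergence yields the limit $-\int_0^T f(s,W^H_s)ds$ in $L^2(\Omega)$. For (ii) and (iv): substituting the representation of $\delta_{k,i,s}(W^{2,H})$ into $I_{1,3}$ (resp.\ using the form of $I_{2,5}$ directly), pulling the $\mathcal F_{t_i}$‑measurable coefficient inside the $dB$‑integral over $[t_i,t_{i+1}]$, and applying the stochastic Fubini theorem to exchange the $ds$‑integral with $dB$, one rewrites $\sum_i I_{1,3}$ (resp.\ $\sum_i I_{2,5}$) as a single It\^o integral $\sum_j\int_0^T\Phi_N^j(v)\,dB_j(v)$ whose integrand converges pointwise in $(\omega,v)$ to the integrand of the announced limit and is dominated, uniformly in $N$, by a deterministic constant (from $\|\partial_{x_k}f(s,\cdot)\|_\infty\le C$, resp.\ $\|g_j(s,u,\cdot)\|_\infty\le C$, times $\int_v^T(s-v)^{H-1/2}ds<\infty$). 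Dominated convergence in $L^2(\Omega\times[0,T])$ and the It\^o isometry then give (ii) and (iv).

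Statement (iii) is the crux. Using $\delta_{k,i,s}(W^{2,H})=\int_{t_i}^{t_{i+1}}(s-v)^{H-1/2}dB_k(v)$, write $I_{1,4}(t_i,t_{i+1})$ as a finite sum over $j,k$ of terms $\int_{t_{i+1}}^T\big(\int_{t_i}^{t_{i+1}}\partial_{x_k}P_{\frac1{2H}(s-t_i)^{2H}}g_j(s,u,W^{2,H}(t_i,s))dB_j(u)\big)\big(\int_{t_i}^{t_{i+1}}(s-v)^{H-1/2}dB_k(v)\big)ds$, and for each fixed $s$ apply It\^o's integration‑by‑parts formula to the product of the two stochastic integrals over $[t_i,t_{i+1}]$: for $j\ne k$ one gets only two double stochastic integrals, while for $j=k$ one additionally gets the bracket term $\int_{t_i}^{t_{i+1}}\partial_{x_j}P_{\frac1{2H}(s-t_i)^{2H}}g_j(s,u,W^{2,H}(t_i,s))(s-u)^{H-1/2}du$. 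Summing in $i$, the bracket contribution (only $j=k$) becomes, after a stochastic Fubini, a genuine Riemann sum for $\sum_j\int_0^T\int_t^T\partial_{x_j}P_{\frac1{2H}(s-t)^{2H}}g_j(s,t,W^{2,H}(t,s))(s-t)^{H-1/2}ds\,dt$ and converges by dominated convergence. The remaining double stochastic integrals $M_i$ must be shown to vanish in $L^2(\Omega)$, and here is the point: for $i<i'$ the variable $M_i$ is $\mathcal F_{t_{i+1}}$‑measurable while $\E_{t_{i'}}[M_{i'}]=0$ (the outermost $dB$‑integration in $M_{i'}$ runs over $[t_{i'},t_{i'+1}]$ with an adapted integrand), so the $M_i$ are pairwise orthogonal in $L^2(\Omega)$, whence $\|\sum_i M_i\|_{L^2}^2=\sum_i\|M_i\|_{L^2}^2$; since each $\|M_i\|_{L^2}=O(1/N)$ (one extra It\^o integration over a cell of length $T/N$, together with $\E_{t_i}[|\delta_{k,i,s}(W^{2,H})|^2]\lesssim1/N$ and $\int_{t_{i+1}}^T(\cdots)ds<\infty$), the sum is $O(N^{-1/2})\to0$.

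I expect this orthogonality step to be the main obstacle: the naive triangle‑inequality bound $\sum_i\|M_i\|_{L^2}$ is only $O(1)$, so one genuinely needs a square‑function/orthogonality argument to recover the $o(1)$ decay. The same device handles the higher‑order Taylor terms and the martingale parts of the first‑order terms that make up the remainder $R$: their $(\delta_{i,s}(W^{2,H}))^2$ and $(\delta_{i,s}(W^{2,H}))^3$ factors make them orthogonal across cells and of size $O(N^{-1})$ once $|\delta_{i,s}(W^{2,H})|^2$ is split into its conditional mean $\int_{t_i}^{t_{i+1}}(s-v)^{2H-1}dv$ and a centered remainder. The conditional‑mean pieces of the second‑order Taylor terms and the heat‑semigroup‑increment terms $I_{1,2}$, $I_{2,1}$ do have non‑vanishing Riemann‑sum limits individually, but these limits cancel in the combination defining $R$ — a cancellation most cleanly read off from the exact telescoping identity $\sum_i\big[(F(t_{i+1})-F(t_i))-(G(t_{i+1})-G(t_i))\big]=-F^a(0)$, which, combined with the already established limits (i)–(iv) and a direct evaluation of $F^a(0)$ from definition~(\ref{eq:definitionFa}) via one use of the Clark–Ocone formula, forces $\lim_N\sum_i R(t_i,t_{i+1})=0$.
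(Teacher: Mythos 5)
Your argument for (i)--(iv) is correct and follows essentially the same route as the paper: the same representation $\delta_{k,i,s}(W^{2,H})=\int_{t_i}^{t_{i+1}}(s-v)^{H-1/2}dB_k(v)$ with stochastic Fubini for (ii) and (iv), the semigroup-to-identity plus fBm-increment estimate for (i), and for (iii) the integration-by-parts of the two cell martingales with the bracket term producing the limit and the double stochastic integrals killed by cross-cell orthogonality together with a per-cell $O(1/N)$ bound, exactly as in the paper's treatment via $M_{i,j,k,s}$, $N_{i,k,s}$ and the comparison estimate for $\beta_{i,j,s}$. Your closing remark about deducing $\lim_N\sum_i R(t_i,t_{i+1})=0$ from the telescoping identity plus a "direct evaluation of $F^a(0)$" is outside the scope of this lemma and would be circular as stated (identifying $F^a(0)$ with the announced terms is precisely what the decomposition is for); the paper instead proves $R\to 0$ directly in Lemma \ref{lemma:tec2} by exhibiting the cancellations $I_{1,2}+I_{1,6}$ and $I_{2,1}+I_{2,3}$ and bounding the higher-order Taylor terms.
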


\begin{proof}
Throughout this proof, $C$ denotes a positive constant (which can vary from line to line) and that represents the sup norm of $f$ and its derivatives up to order $4$.
\textit{} \vspace{1em}\\ 
\textbf{Proof of (i) : \\\\} 
We set using Decomposition (\ref{eq:decompositionfBm}), $W^{2,H}(s,s):=W^{H}(s)$, for any $s$. We have that   
\begin{align*}
&I_{1,1}(t_i,t_{i+1})-\int_{t_i}^{t_{i+1}} f(s,W^{H}(s))ds\\
&=- \int_{t_i}^{t_{i+1}} \left(P_{\frac{1}{2H}(s-t_i)^{2H}}f(s,W^{2,H}(t_i,s))-f(s,W^{2,H}(s,s))ds\right)ds\\
&=- \int_{t_i}^{t_{i+1}} \left(P_{\frac{1}{2H}(s-t_i)^{2H}}f(s,W^{2,H}(t_i,s))-P_{0}f(s,W^{2,H}(t_i,s))\right)ds\\
&- \int_{t_i}^{t_{i+1}} \left(P_{0}f(s,W^{2,H}(t_i,s))-P_{0}f(s,W^{2,H}(s,s))\right)ds.
\end{align*}
Since the semigroup $P$ is associated to the heat equation, the first term of the right-hand side can be re-written as : 
\begin{align*}
&=\left|\int_{t_i}^{t_{i+1}} \left(P_{\frac{1}{2H}(s-t_i)^{2H}}f(s,W^{2,H}(t_i,s))-f(s,W^{2,H}(s,s))\right)ds\right|\\
&\leq \frac12 \int_{t_i}^{t_{i+1}} \int_{0}^{\frac{1}{2H}(s-t_i)^{2H}} |\Delta P_{r}f(s,W^{2,H}(t_i,s))|drds\\
&\leq \frac{C}{4H} \sup_{s\in [0,T]} \int_{t_i}^{t_{i+1}} (s-t_i)^{2H}ds.
\end{align*}
Thus, 
\begin{align*}
&\E\left[\left| \sum_{i=0}^{N-1} \int_{t_i}^{t_{i+1}} \left(P_{\frac{1}{2H}(s-t_i)^{2H}}f(s,W^{2,H}(t_i,s))-f(s,W^{2,H}(s,s))\right)ds\right|^2\right]\\
& \leq C \left| \sum_{i=0}^{N-1} \int_{t_i}^{t_{i+1}} (s-t_i)^{2H}ds\right|^2 \\
&\underset{N\to+\infty}{\longrightarrow} 0.
\end{align*}
We now turn to the second term. 
Since 
\begin{equation}
\label{eq:incfrac}
\E\left[\left|W^{2,H}(u,s)-W^{2,H}(v,s)\right|^2\right] \leq |u-v|^{\min\{2H,1\}}\quad \forall u, v \leq s,
\end{equation}
we deduce that  
%In the computations below, $\bar t_i$ denotes a element in $(t_i,t_{i+1})$ from mean-value theorem. 
\begin{align*}
& \E\left[\left|\sum_{i = 0}^{N-1} \int_{t_i}^{t_{i+1}} \left(f(s,W^{2,H}(t_i,s))-f(s,W^{2,H}(s,s))\right)ds\right|^2\right]^{1/2}\\
&\leq C \sum_{i = 0}^{N-1} \int_{t_i}^{t_{i+1}} \E\left[\left|W^{2,H}(t_i,s)-W^{2,H}(s,s)\right|^2\right]^{1/2}ds\\
&\leq C \sum_{i = 0}^{N-1} |\tio-t_i|^{1+\min\{H,1/2\}}\\
&\underset{N\to+\infty}{\longrightarrow} 0.
\end{align*}
So Item (i) (or equivalently (\ref{eq:I11})) is proved.\\\\ 
\noindent
\textbf{Proof of (ii) : \\\\} 
Fix $k$ in $\{1,\cdots,d\}$. First note that as $f$ belongs to $\mathcal{S}_{ad}$, and since $W^{2,H}(t_i,s)$ is $\mathcal{F}_{t_i}$-measurable $(s\geq t_{i+1})$, we have that : 
$$ \E_{t_i}\left[\frac{\partial}{\partial y_k} P_{\frac{1}{2H}(s-t_i)^{2H}} f(s,W^{2,H}(t_i,s))\right] = \frac{\partial}{\partial y_k} P_{\frac{1}{2H}(s-t_i)^{2H}} f^a_{t_i}(s,W^{2,H}(t_i,s)).$$
Hence, (ii) will be proved if the following holds true for any $k$ in $\{1,\ldots,d\}$ : 
\begin{align}
\label{eq:I13temp1}
&\lim_{N\to+\infty} \sum_{i=0}^{N-1} \int_{t_{i+1}}^T \frac{\partial}{\partial y_k} P_{\frac{1}{2H}(s-t_i)^{2H}} f^a_{t_i}(s,W^{2,H}(t_i,s)) \delta_{k,i,s}(W^{2,H}) ds \nonumber \\
&\overset{L^2(\Omega)}{=} \int_0^T \int_t^T \frac{\partial}{\partial y_k} P_{\frac{1}{2H}(s-t)^{2H}} f^a_{t}(s,W^{2,H}(t,s))(s-t)^{H-1/2}ds dB_k(t).
\end{align}
By definition, (recall Definition (\ref{eq:incr}) for the increments of $W^{2,H}$)
\begin{align*}
I_{1,3,k}(\ti,\tio)&:= \int_{t_{i+1}}^T \frac{\partial}{\partial y_k} P_{\frac{1}{2H}(s-t_i)^{2H}} f^a_{t_i}(s,W^{2,H}(t_i,s)) \delta_{k,i,s}(W^{2,H}) ds \\
&= \int_{t_{i+1}}^T \frac{\partial}{\partial y_k} P_{\frac{1}{2H}(s-t_i)^{2H}} f^a_{t_i}(s,W^{2,H}(t_i,s)) \int_{t_i}^{t_{i+1}} (s-u)^{H-1/2} dB_k(u) ds\\
&= \int_{t_i}^{t_{i+1}} \int_{t_{i+1}}^T \frac{\partial}{\partial y_k} P_{\frac{1}{2H}(s-t_i)^{2H}} f^a_{t_i}(s,W^{2,H}(t_i,s)) (s-u)^{H-1/2} ds dB_k(u),
\end{align*}
where the last equality is justified by the stochastic Fubini theorem. Indeed,  
\begin{align*}
& \E\left[\int_{t_{i+1}}^T \int_{t_i}^{t_{i+1}} \left| \frac{\partial}{\partial x_k} P_{\frac{1}{2H}(s-t_i)^{2H}} f^a_{t_i}(s,W^{2,H}(t_i,s)) \right|^2 (s-u)^{2H-1} du ds \right]\\
&= \E\left[\int_{t_{i+1}}^T \int_{t_i}^{t_{i+1}} \left| \E_{t_i}\left[P_{\frac{1}{2H}(s-t_i)^{2H}} \frac{\partial}{\partial y_k} f(s,W^{2,H}(t_i,s))\right] \right|^2 (s-u)^{2H-1} du ds \right]\\
&\leq C \int_{t_{i+1}}^T \int_{t_i}^{t_{i+1}} (s-u)^{2H-1} du ds <+\infty. 
\end{align*}
Using this expression, the It\^o isometry and the independence of the disjoint increments of the Brownian motion, we get that 
\begin{align*}
&\hspace{-3em}\E\left[\left|\sum_{i=0}^{N-1} I_{1,3,k}(\ti,\tio)-\int_{t_i}^{t_{i+1}} \int_{t_{i}}^T \frac{\partial}{\partial y_k} P_{\frac{1}{2H}(s-t_i)^{2H}} f^a_{t_i}(s,W^{2,H}(t_i,s)) (s-t_i)^{H-1/2} ds dB_k(u)\right|^2\right]\\
&\hspace{-3em}\leq 2 \E\left[\left|\sum_{i=0}^{N-1} \int_{t_i}^{t_{i+1}} \int_{t_{i+1}}^T \frac{\partial}{\partial y_k} P_{\frac{1}{2H}(s-t_i)^{2H}} f^a_{t_i}(s,W^{2,H}(t_i,s)) \left((s-u)^{H-1/2} -(s-t_i)^{H-1/2}\right) ds dB_k(u) \right|^2\right]\\
&\hspace{-1em}+ 2 \E\left[\left|\sum_{i=0}^{N-1} \int_{t_i}^{t_{i+1}} \int_{t_{i}}^{t_{i+1}} \frac{\partial}{\partial y_k} P_{\frac{1}{2H}(s-t_i)^{2H}} f^a_{t_i}(s,W^{2,H}(t_i,s)) (s-t_i)^{H-1/2} ds dB_k(u) \right|^2\right]\\
&\hspace{-3em}= 2 \sum_{i=0}^{N-1} \int_{t_i}^{t_{i+1}} \E\left[\left|\int_{t_{i+1}}^T \frac{\partial}{\partial y_k} P_{\frac{1}{2H}(s-t_i)^{2H}} f^a_{t_i}(s,W^{2,H}(t_i,s)) \left((s-u)^{H-1/2} -(s-t_i)^{H-1/2}\right) ds \right|^2\right] du\\
&\hspace{-1em}+ 2 \sum_{i=0}^{N-1} \int_{t_i}^{t_{i+1}} \E\left[\left| \int_{t_{i}}^{t_{i+1}} \frac{\partial}{\partial y_k} P_{\frac{1}{2H}(s-t_i)^{2H}} f^a_{t_i}(s,W^{2,H}(t_i,s)) (s-t_i)^{H-1/2} ds \right|^2\right]du\\
&\hspace{-3em}\leq 2 C S_N,
\end{align*}
where 
$$ S_N:=\sum_{i=0}^{N-1} \int_{t_i}^{t_{i+1}} \left(\left|\int_{t_{i+1}}^T \left|(s-u)^{H-1/2} -(s-t_i)^{H-1/2}\right| ds \right|^2 +\left| \int_{t_{i}}^{t_{i+1}} (s-t_i)^{H-1/2} ds \right|^2\right)du.$$
A direct computation gives that $\lim_{N\to+\infty} S_N =0$. It remains to prove that the process
\begin{equation}\label{eq:procI13}
t \to \int_t^T P_{\frac{1}{2H}(s-t)^{2H}} \frac{\partial}{\partial x_k} f^a_t(s,W^{2,H}(s,t)) (s-t)^{H-1/2} ds,
\end{equation}
is continuous in $L^2(\Omega\times[0,T])$ in order to verifies the assumptions of \cite[Theorem 2.74]{jacod2006calcul} in order to deduce that
\begin{align*}
\sum_{i = 0}^{N-1} \int_{t_i}^{t_{i+1}} \int_{t_{i}}^T \frac{\partial}{\partial y_k} P_{\frac{1}{2H}(s-t_i)^{2H}} f^a_{t_i}(s,W^{2,H}(t_i,s)) (s-t_i)^{H-1/2} ds dB_k(u)
\\ \underset{N\to\infty}{\to}  \int_0^T \int_t^T \frac{\partial}{\partial y_k} P_{\frac{1}{2H}(s-t)^{2H}} f^a_{t}(s,W^{2,H}(t,s))(s-t)^{H-1/2}ds dB_k(t).
\end{align*}
First, we prove the domination assumption. Using the change of variable $u = s-t$ and the fact that $f$ is a smooth random field, we obtain the following estimate
\begin{align*}
\left|\int_t^T P_{\frac{1}{2H}(s-t)^{2H}} \frac{\partial}{\partial x_k} f^a_t(s,W^{2,H}(s,t)) (s-t)^{H-1/2} ds\right|
\\ = \left|\int_0^{T-t} P_{\frac{1}{2H}u^{2H}} \frac{\partial}{\partial x_k} f^a_{t}(u+t,W^{2,H}(u+t,t)) u^{H-1/2} du\right|
\\ \lesssim \int_0^{T-t} u^{H-1/2} du \leq (T-t)^{H+1/2}.
\end{align*}
We now turn to the continuity of the process \eqref{eq:procI13} itself. By the change of variable $u = s-t$, we essentially have to prove that $f^a_{t}(u+t,W^{2,H}(u+t,t))$ is continuous with respect to $t$. The only difficulty is the continuity of $t\to f^a_{t}(u,y)$ for any $(u,y)\in[0,T]\times\mathbb{R}^d$. Clark-Ocone's formula gives
\begin{equation*}
f(u,y) = \mathbb{E}[f(u,y)] + \sum_{j =1}^d \int_0^T \mathbb{E}_r[D_j( f(u,y)) (r)] dB_j(r),
\end{equation*}
then, we derive
\begin{equation*}
f^a_t(u,y) = \mathbb{E}[f(u,y)] + \sum_{j =1}^d \int_0^t \mathbb{E}_r[D_j( f(u,y)) (r)] dB_j(r),
\end{equation*}
which is continuous with respect to $t$ uniformly in $(u,y)$. This ends the proof of (\ref{eq:I13temp1}).
\\\\ 
\noindent
\textbf{Proof of (iii) : \\\\} 
For fixed $i \in\{0,\cdots,N-1\}$, $j,k$ in $\{1,\cdots,d\}$, $s\in[t_{i+1},T]$, we set 
$$\alpha_{i,j,k,s}(u):=\frac{\partial}{\partial y_k} P_{\frac{1}{2H}(s-t_i)^{2H}} g_j(s,u,W^{2,H}(t_i,s)),$$ 
$$ M_{i,j,k,s}(r):= \int_{t_i}^r \alpha_{i,j,k,s}(u) dB_j(u), \quad N_{i,k,s}(r):=\int_{t_i}^{r} (s-u)^{H-\frac12} dB_k(u), \; r\in[t_i,t_{i+1}].$$
so that $M_{i,j,k,s}$ and $N_{i,k,s}$ are continuous martingales. Note once again that since $f$ belongs to $\mathcal{S}_{ad}$, $\alpha_{i,j,k,s}(u)$ is uniformly (in $i,j,k,s,u$) bounded $\P$-a.s. Thus
$$ I_{1,4}(t_i,t_{i+1}) = \sum_{k=1}^d \sum_{j=1}^d \int_{t_{i+1}}^T M_{i,j,k,s}(t_{i+1}) N_{i,k,s}(t_{i+1}) ds.$$
The integration by parts formula for semimartingales implies that 
\begin{align}
\label{eq:temp2termI14}
&M_{i,j,k,s}(t_{i+1}) N_{i,k,s}(t_{i+1}) - \textbf{1}_{j=k} \int_{t_i}^{t_{i+1}} \alpha_{i,j,k,s}(u) (s-u)^{H-\frac12} du\nonumber\\
&=\int_{t_i}^{t_{i+1}} M_{i,j,k,s}(r) dN_{i,k,s}(r) + \int_{t_i}^{t_{i+1}} N_{i,k,s}(r) dM_{i,j,k,s}(r).
\end{align}
We show below that both terms in the right hand side do not contribute to the limit. Indeed, using the fact that the co-variation $[B_j(\cdot),B_{j'}(\cdot)]=0$ for any $j\neq j'$, we get 
\begin{align}
\label{eq:temp1termI14}
&\E\left[\left|\sum_{i=0}^{N-1} \int_{t_{i+1}}^T \sum_{k=1}^d \sum_{j=1}^d \int_{t_i}^{t_{i+1}} M_{i,j,k,s}(r) dN_{i,k,s}(r) ds\right|^2\right] \nonumber\\
&=\sum_{i,i'=0}^{N-1} \sum_{k,k'=1}^d \sum_{j,j'=1}^d \int_{t_{i+1}}^T \int_{t_{i'+1}}^T \E\left[\int_{t_i}^{t_{i+1}} M_{i,j,k,s}(r) dN_{i,k,s}(r) \int_{t_i'}^{t_{i'+1}} M_{i',j',k',s'}(r') dN_{i',k',s'}(r')\right] ds ds'\nonumber\\
&=\sum_{i=0}^{N-1} \sum_{k=1}^d \sum_{j,j'=1}^d \int_{t_{i+1}}^T \int_{t_{i+1}}^T \int_{t_i}^{t_{i+1}} \E\left[ M_{i,j,k,s}(r) M_{i,j',k,s'}(r) \right] (s-r)^{H-1/2} (s'-r)^{H-1/2} dr ds ds'\nonumber\\
&=\sum_{i=0}^{N-1} \sum_{k=1}^d \sum_{j=1}^d \int_{t_{i+1}}^T \int_{t_{i+1}}^T \int_{t_i}^{t_{i+1}} \int_{t_i}^r \E[\alpha_{i,j,k,s}(u) \alpha_{i,j,k,s'}(u)] du (s-r)^{H-1/2} (s'-r)^{H-1/2} dr ds ds'\nonumber\\
&\leq \frac{C}{N} \sum_{i=0}^{N-1} \int_{t_{i+1}}^T \int_{t_{i+1}}^T \int_{t_i}^{t_{i+1}} (s-r)^{H-1/2} (s'-r)^{H-1/2} dr ds ds' \nonumber\\
&= \frac{C}{N} \sum_{i=0}^{N-1}\int_{t_i}^{t_{i+1}} \left(\int_{t_{i+1}}^T (s-r)^{H-1/2} ds\right)^2 dr\nonumber\\
&\underset{N\to+\infty}{\longrightarrow} 0.
\end{align}
Now we turn to the analysis of the the second term in the right hand side of (\ref{eq:temp2termI14}). The first arguments follow the same line as for the term above (using mainly the independence of the components of the Brownian motion $B$). Indeed, we have : 
\begin{align}
\label{eq:temp3termI14}
&\E\left[\left|\sum_{i=0}^{N-1} \int_{t_{i+1}}^T \sum_{k=1}^d \sum_{j=1}^d \int_{t_i}^{t_{i+1}} N_{i,k,s}(r) dM_{i,j,k,s}(r) ds\right|^2\right] \nonumber\\
&=\sum_{i=0}^{N-1} \sum_{k,k'=1}^d \sum_{j=1}^d \int_{t_{i+1}}^T \int_{t_{i+1}}^T \int_{t_i}^{t_{i+1}} \E\left[N_{i,k,s}(r) N_{i,k',s'}(r) \alpha_{i,j,k,s}(r) \alpha_{i,j,k',s'}(r)\right] dr ds ds'\nonumber\\
&\leq C \sum_{i=0}^{N-1} \sum_{k,k'=1}^d \sum_{j=1}^d \int_{t_{i+1}}^T \int_{t_{i+1}}^T \int_{t_i}^{t_{i+1}} \E\left[|N_{i,k,s}(r) N_{i,k',s'}(r)|\right] dr ds ds'\nonumber\\
&= C \sum_{i=0}^{N-1} \sum_{k,k'=1}^d \int_{t_{i+1}}^T \int_{t_{i+1}}^T \int_{t_i}^{t_{i+1}} \left(\int_{\ti}^r (s-v)^{2H-1} dv \int_{\ti}^r (s'-v)^{2H-1} dv\right)^{1/2} dr ds ds'.
\end{align} 
So plugging this estimate in (\ref{eq:temp3termI14}), we get 
\begin{align}
\label{eq:temp3bistermI14}
&\E\left[\left|\sum_{i=0}^{N-1} \int_{t_{i+1}}^T \sum_{k=1}^d \sum_{j=1}^d \int_{t_i}^{t_{i+1}} N_{i,k,s}(r) dM_{i,j,k,s}(r) ds\right|^2\right] \nonumber\\
&\leq C \sum_{i=0}^{N-1} \int_{t_i}^{t_{i+1}} \left(\int_{t_{i+1}}^T \left(\int_{\ti}^r (s-v)^{2H-1} dv\right)^{1/2} ds\right)^2 dr\nonumber\\
&\leq C \sum_{i=0}^{N-1} \int_{t_i}^{t_{i+1}} \int_{t_{i+1}}^T \int_{\ti}^r (s-v)^{2H-1} dv ds dr\nonumber\\
&\underset{N\to+\infty}{\longrightarrow} 0.
\end{align}
So to summarize, Relations (\ref{eq:temp2termI14}), (\ref{eq:temp1termI14}) and (\ref{eq:temp3bistermI14}) imply that : 
$$ \lim_{N\to +\infty} \E\left[\left|\sum_{i=0}^{N-1} I_{1,4}(t_i,t_{i+1}) - \sum_{i=0}^{N-1} \sum_{j=1}^d \int_{t_{i+1}}^T \int_{t_i}^{t_{i+1}} \alpha_{i,j,j,s}(u) (s-u)^{H-\frac12} du ds\right|^2\right]=0.$$
However we have that : 
\begin{align*}
&\hspace{-2em} \E\left[\left|\sum_{j=1}^d \left(\sum_{i=0}^{N-1} \int_{t_{i+1}}^T \int_{t_i}^{t_{i+1}} \alpha_{i,j,j,s}(u) (s-u)^{H-\frac12} du ds - \int_0^T  \int_{t}^T \partial_{y_j} P_{\frac{1}{2H}(s-t)^{2H}} g_j(s,t,W^{2,H}(t,s)) (s-t)^{H-1/2}dsdt\right) \right|^2\right]\\
&\hspace{-2em}=\E\left[\left|\sum_{j=1}^d \sum_{i=0}^{N-1} \int_{t_i}^{t_{i+1}} \left( \int_{t_{i+1}}^T \alpha_{i,j,j,s}(u) (s-u)^{H-\frac12} ds - \int_{u}^T \partial_{y_j} P_{\frac{1}{2H}(s-u)^{2H}} g_j(s,u,W^{2,H}(u,s)) (s-u)^{H-1/2}ds\right)du \right|^2\right]\\
&\hspace{-2em}=\E\left[\left|\sum_{j=1}^d \sum_{i=0}^{N-1} \int_{t_i}^{t_{i+1}} \left( \int_{u}^T \beta_{i,j,s}(u) (s-u)^{H-\frac12} ds - \int_{u}^{t_{i+1}} \alpha_{i,j,j,s}(u) (s-u)^{H-\frac12} ds\right)du \right|^2\right]\\
&\hspace{-2em}\leq 2 \E\left[\left|\sum_{j=1}^d \sum_{i=0}^{N-1} \int_{t_i}^{t_{i+1}} \int_{u}^T \beta_{i,j,s}(u) (s-u)^{H-\frac12} ds du \right|^2\right]\\
&\hspace{-2em}+ 2 \E\left[\left|\sum_{j=1}^d \sum_{i=0}^{N-1} \int_{t_i}^{t_{i+1}} \int_{u}^{t_{i+1}} \alpha_{i,j,j,s}(u) (s-u)^{H-\frac12} ds du \right|^2\right]\\
&\hspace{-2em}=:2 \left(I_{1,4,1} + I_{1,4,2}\right). 
\end{align*} 
with 
\begin{align*}
\beta_{i,j,s}(u)&:=\alpha_{i,j,j,s}(u)-\partial_{y_j} P_{\frac{1}{2H}(s-u)^{2H}} g_j(s,u,W^{2,H}(u,s)).
\end{align*}
The proof of (iii) is then established if we prove that 
\begin{equation}
\label{eq:temp5I14}
\lim_{N\to+\infty} I_{1,4,1} + I_{1,4,2}=0.
\end{equation}
Note first that : 
\begin{align}
\label{eq:betatemp1}
&\beta_{i,j,s}(u)\nonumber\\
&= \partial_{x_j} \left(P_{\frac{1}{2H}(s-t_i)^{2H}} g_j(s,u,W^{2,H}(t_i,s))-P_{\frac{1}{2H}(s-u)^{2H}} g_j(s,u,W^{2,H}(u,s))\right)\nonumber\\
&= \left(P_{\frac{1}{2H}(s-t_i)^{2H}}-P_{\frac{1}{2H}(s-u)^{2H}}\right) \partial_{x_j} g_j(s,u,W^{2,H}(t_i,s))\nonumber\\
&+ P_{\frac{1}{2H}(s-u)^{2H}} \left(\partial_{x_j} g_j(s,u,W^{2,H}(t_i,s))-\partial_{x_j} g_j(s,u,W^{2,H}(u,s))\right)\nonumber\\
&=-\frac12 \int_{t_i}^u \Delta P_{\frac{1}{2H}(s-r)^{2H}} \partial_{x_j} g_j(s,u,W^{2,H}(t_i,s)) dr \nonumber\\
&+ \int_0^1 P_{\frac{1}{2H}(s-u)^{2H}} \nabla \partial_{x_j} g_j(s,u,W^{2,H}(t_i,s,\theta)) d\theta \cdot (W^{2,H}(t_i,s)-W^{2,H}(u,s)),
\end{align}
where we recall Notation (\ref{eq:inctheta}). Using once again the fact that $f$ belongs to $\mathcal{S}_{ad}$, we immediately obtain that 
\begin{equation}
\label{eq:boundbeta}
|\beta_{i,j,s}(u)| \leq C \left((u-t_i) + \sum_{k=1}^d \left|W^{2,H}(\ti,s)-W^{2,H}(u,s)\right|\right),
\end{equation}
from which we deduce that (using (\ref{eq:incfrac}))
\begin{align*}
&\hspace{-2em}\left(I_{1,4,1}\right)^{1/2}\\
&\hspace{-2em}\leq C \sum_{j=1}^d \sum_{i=0}^{N-1} \int_{t_i}^{t_{i+1}} \int_{u}^T \left((u-t_i)+\sum_{k=1}^d \E\left[\left|W^{2,H}(\ti,s)-W^{2,H}(u,s)\right|^2\right]^{1/2}\right) (s-u)^{H-\frac12} ds du \\
&\hspace{-2em}\leq C \sum_{i=0}^{N-1} \int_{t_i}^{t_{i+1}} \int_{u}^T \left((u-t_i)+|u-t_i|^{\min\{H,1/2\}}\right) (s-u)^{H-\frac12} ds du \\
&\underset{N\to+\infty}{\longrightarrow} 0. 
%&\hspace{-2em}\leq \frac{C}{N} \sum_{i=0}^{N-1} \int_{t_i}^{t_{i+1}} \int_{u}^T (s-u)^{H-\frac12} ds du \\
%&\hspace{-2em}+ C \sum_{i=0}^{N-1} \int_{t_i}^{t_{i+1}} \int_{u}^T \left(\int_{t_i}^u (s-r)^{2H-1} dr\right)^{1/2} (s-u)^{H-\frac12} ds du \\
%&\hspace{-2em}\leq \frac{C}{N} + C \sum_{i=0}^{N-1} \int_{t_i}^{t_{i+1}} \int_{u}^T \left(\int_{t_i}^u (s-r)^{2H-1} dr\right)^{1/2} (s-u)^{H-\frac12} ds du.
\end{align*}
%A direct computation proves that 
%\begin{align*}
%&\sum_{i=0}^{N-1} \int_{t_i}^{t_{i+1}} \int_{u}^T \left(\int_{t_i}^u (s-r)^{2H-1} dr\right)^{1/2} (s-u)^{H-\frac12} ds du\\
%&\leq C \sum_{i=0}^{N-1} \int_{t_i}^{t_{i+1}} \left(\int_{u}^T \int_{t_i}^{\tio} (s-r)^{2H-1} dr ds \int_{u}^T (s-u)^{2H-1} ds\right)^{1/2} du\\
%&\leq \frac{C}{\sqrt{N}} \sum_{i=0}^{N-1} \int_{t_i}^{t_{i+1}} \left(\int_{u}^T (s-u)^{2H-1} ds\right)^{1/2} du\\
%&\leq C N^{-1/2}\\
%&\underset{N\to+\infty}{\longrightarrow} 0. 
%\end{align*}
Thus 
$$ \lim_{N\to+\infty} I_{1,4,1} = 0.$$
The convergence of the term $I_{1,4,2}$ is easy to handle as :
\begin{align*}
\left(I_{1,4,2}\right)^{1/2} & \leq \sum_{j=1}^d \sum_{i=0}^{N-1} \int_{t_i}^{t_{i+1}} \int_{u}^{t_{i+1}} (s-u)^{H-\frac12} \E\left[\left| \alpha_{i,j,j,s}(u)\right|^2\right]^{1/2} ds du\\
&\leq C \sum_{i=0}^{N-1} \int_{t_i}^{t_{i+1}} \int_{u}^{t_{i+1}} (s-u)^{H-\frac12} ds du \\
&\underset{N\to+\infty}{\longrightarrow} 0.
\end{align*}  
So (\ref{eq:temp5I14}) is proved.\\\\ 
\noindent

\textbf{Proof of (iv) : \\\\}
Recall that 
$$ I_{2,5}(t_i,t_{i+1})= -\sumj \int_{t_i}^{t_{i+1}} \int_u^T P_{\frac{1}{2H}(s-t_{i})^{2H}} g_j(s,u,W^{2,H}(t_{i},s)) ds dB_j(u).$$
Hence : 
\begin{align*}
&\sum_{i=0}^{N-1} I_{2,5}(t_i,t_{i+1}) + \int_{0}^{T} \int_u^T P_{\frac{1}{2H}(s-u)^{2H}} g_j(s,u,W^{2,H}(t_{i},s)) ds dB_j(u)\\
&=-\sumj \sum_{i=0}^{N-1} \int_{t_i}^{t_{i+1}} \int_u^T \left(P_{\frac{1}{2H}(s-t_{i})^{2H}}-P_{\frac{1}{2H}(s-u)^{2H}}\right) g_j(s,u,W^{2,H}(t_{i},s)) ds dB_j(u)\\
&-\sumj \sum_{i=0}^{N-1} \int_{t_i}^{t_{i+1}} \int_u^T P_{\frac{1}{2H}(s-u)^{2H}} (g_j(s,u,W^{2,H}(t_{i},s))-g_j(s,u,W^{2,H}(u,s))) ds dB_j(u)\\
&=\sumj \sum_{i=0}^{N-1} \int_{t_i}^{t_{i+1}} \int_u^T \gamma_{s,t_i}(u) ds dB_j(u)
\end{align*}
with 
\begin{align*}
&\gamma_{s,t_i}(u)\\
&:= \left(P_{\frac{1}{2H}(s-t_{i})^{2H}}-P_{\frac{1}{2H}(s-u)^{2H}}\right) g_j(s,u,W^{2,H}(t_{i},s)) + P_{\frac{1}{2H}(s-u)^{2H}} \left[g_j(s,u,W^{2,H}(t_{i},s))-g_j(s,u,W^{2,H}(u,s))\right].
\end{align*}
Hence using the It\^o isometry, 
\begin{align*}
&\E\left[\left|\sum_{i=0}^{N-1} I_{2,5}(t_i,t_{i+1}) + \int_{0}^{T} \int_u^T P_{\frac{1}{2H}(s-u)^{2H}} g_j(s,u,W^{2,H}(t_{i},s)) ds dB_j(u)\right|\right]^{1/2}\\
&\leq \sumj \sum_{i=0}^{N-1} \int_{t_i}^{t_{i+1}} \E\left[\left| \int_u^T\gamma_{s,t_i}(u) ds \right|^2\right]^{1/2} du\\
&\leq \sumj \sum_{i=0}^{N-1} \int_{t_i}^{t_{i+1}} \int_u^T \E\left[\left|\gamma_{s,t_i}(u) \right|^2\right]^{1/2} ds du.
\end{align*}
Up to the gradient, the quantity $\gamma_{s,t_i}$ is very similar to $\beta_{i,j,s}$ defined in (\ref{eq:betatemp1}) and using (\ref{eq:boundbeta}) and (\ref{eq:incfrac}), we get 
\begin{align*}
&\E\left[\left|\sum_{i=0}^{N-1} I_{2,5}(t_i,t_{i+1}) + \int_{0}^{T} \int_u^T P_{\frac{1}{2H}(s-u)^{2H}} g_j(s,u,W^{2,H}(t_{i},s)) ds dB_j(u)\right|\right]^{1/2}\\
&\leq C \sum_{i=0}^{N-1} \int_{t_i}^{t_{i+1}} \int_u^T \left( (u-t_i) + |t_{i+1}-t_i|^{\min\{H,1/2\}}\right) ds du\\
&\underset{N\to+\infty}{\longrightarrow} 0.
\end{align*}
\end{proof}

\begin{lemma}
\label{lemma:tec2}
We use notations introduced in the proof of Theorem \ref{thm:main}, the following convergences hold true in $L^2(\Omega)$ : 
\begin{itemize}
\item[(i)] $$\lim_{N\to+\infty} \sum_{i = 0}^{N-1} I_{1,2}(t_i,t_{i+1}) + \lim_{N\to+\infty} \sum_{i = 0}^{N-1} I_{1,6}(t_i,t_{i+1})=0,$$
\item[(ii)] $$\lim_{N\rightarrow \infty} \sum_{i = 0}^{N-1} I_{2,1}(t_i,t_{i+1}) + \lim_{N\to+\infty} \sum_{i = 0}^{N-1} I_{2,3}(t_i,t_{i+1})=0,$$
\item[(iii)] $$\lim_{N\rightarrow \infty} \sum_{i = 0}^{N-1} I_{1,7}(\ti,\tio) = 0,$$
\item[(iv)] $$\lim_{N\rightarrow \infty} \sum_{i = 0}^{N-1} I_{1,8}(\ti,\tio) = 0,$$
\item[(v)] $$\lim_{N\rightarrow \infty} \sum_{i = 0}^{N-1} I_{1,9}(\ti,\tio) = 0,$$
\item[(vi)] $$\lim_{N\rightarrow \infty} \sum_{i = 0}^{N-1} I_{1,10}(\ti,\tio) = 0.$$
\end{itemize}
\end{lemma}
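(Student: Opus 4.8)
The plan is to split the six items into two groups that are proved by different mechanisms. Items (i) and (ii) are genuine cancellations: in each case a ``semigroup increment'' term --- arising because the two heat times $\tfrac{1}{2H}(s-\ti)^{2H}$ and $\tfrac{1}{2H}(s-\tio)^{2H}$ occurring in the two halves of an increment of $F$ (resp.\ of $G$) differ --- is killed, up to vanishing remainders, by the diagonal part of the corresponding second order Taylor term, thanks to $\partial_\tau P_\tau=\tfrac12\Delta P_\tau$. Items (iii)--(vi) concern terms that are individually negligible, and there I would simply exhibit explicit $L^2(\Omega)$ bounds that are $o(1)$ as $N\to\infty$. Throughout I would use that, since $\delta_{i,s}(W^{2,H})=\int_\ti^\tio(s-v)^{H-1/2}dB(v)$ depends only on the increments of $B$ over the block $[\ti,\tio]$, it is independent of $\mathcal F_\ti$, with
$$\E_\ti\big[(\delta_{k,i,s}(W^{2,H}))^2\big]=\int_\ti^\tio(s-v)^{2H-1}dv=\tfrac{1}{2H}\big((s-\ti)^{2H}-(s-\tio)^{2H}\big),\qquad \E_\ti\big[\delta_{k,i,s}(W^{2,H})\,\delta_{\ell,i,s}(W^{2,H})\big]=0\ \ (k\ne\ell),$$
together with the Gaussian moment bounds $\E[|\delta_{k,i,s}(W^{2,H})|^{2n}]\lesssim\big(\int_\ti^\tio(s-v)^{2H-1}dv\big)^{n}$ and the integrability of $v\mapsto(s-v)^{2H-1}$, which holds precisely because $H>0$.

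For (i): in $I_{1,6}$ I would first replace $(\delta_{k,i,s}(W^{2,H}))^2$ by $\E_\ti[(\delta_{k,i,s}(W^{2,H}))^2]$; since the coefficients are bounded and $\mathcal F_\ti$-measurable and, for distinct $i$, the centred factors $(\delta_{k,i,s}(W^{2,H}))^2-\E_\ti[\cdots]$ live on disjoint time blocks and are centred given $\mathcal F_\ti$, the $L^2$-norm of the error reduces to its diagonal in $i$, of size $O(\sum_i(\tio-\ti)^{2\min\{2H,1\}})=o(1)$ after integrating the singular kernel in $s$. Since $\sum_k\partial_{x_k}^2=\Delta$, the centred $I_{1,6}$ becomes $\tfrac{1}{4H}\int_\tio^T\Delta P_{\frac{1}{2H}(s-\ti)^{2H}}f(s,W^{2,H}(\ti,s))\big((s-\ti)^{2H}-(s-\tio)^{2H}\big)ds$. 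In $I_{1,2}$ I would replace $W^{2,H}(\tio,s)$ by $W^{2,H}(\ti,s)$ --- the error being a second moment estimate of the type used repeatedly in Lemma~\ref{lemma:tec1}, controlled via \eqref{eq:incfrac} --- and then write $P_{\frac{1}{2H}(s-\tio)^{2H}}-P_{\frac{1}{2H}(s-\ti)^{2H}}=-\int_{\frac{1}{2H}(s-\tio)^{2H}}^{\frac{1}{2H}(s-\ti)^{2H}}\tfrac12\Delta P_\tau\,d\tau$; expanding $\tau\mapsto\tfrac12\Delta P_\tau f$ to first order about $\tau=\tfrac{1}{2H}(s-\ti)^{2H}$ produces exactly minus the centred $I_{1,6}$, the remainder being bounded by $\|\Delta^2f\|_\infty\big(\tfrac{1}{2H}((s-\ti)^{2H}-(s-\tio)^{2H})\big)^2$. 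Summing in $i$ and integrating in $s$ --- here I would split the $s$-integral into a near-diagonal part $[\tio,t_{i+2}]$ and the rest, using only $H>0$ for integrability instead of a crude $\min\{2H,1\}$ estimate --- gives $o(1)$ and proves (i). Item (ii) follows from the same scheme applied to $G$: there is an additional outer $\sum_j\int_\cdot^\cdot dB_j(u)$ handled throughout by the It\^o isometry, the diagonal part of the $\nabla^2$-Taylor term of $I_{2,3}$ cancels $I_{2,1}$ (with $\E_\ti[(\delta_{k,i,s}(W^{2,H}))^2]$ matching the heat-time increment exactly), and the off-diagonal part $\sum_{k\ne\ell}\partial_{x_k}\partial_{x_\ell}Pg_j\,\delta_{k,i,s}\delta_{\ell,i,s}$ together with the $\delta^2$-centring error vanish by a second moment estimate exploiting $\E_\ti[\delta_{k,i,s}\delta_{\ell,i,s}]=0$ and the Gaussian moment bounds above.

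For (iii)--(vi) I would argue by direct $L^2$ estimates. In $I_{1,7}$ every factor except $\delta_{k,i,s}(W^{2,H})\delta_{\ell,i,s}(W^{2,H})$ with $k\ne\ell$ is bounded and $\mathcal F_\ti$-measurable, so $\E_\ti[I_{1,7}(\ti,\tio)]=0$ and, for $i<i'$, $\E[I_{1,7}(\ti,\tio)\,I_{1,7}(t_{i'},t_{i'+1})]=0$ upon conditioning on $\mathcal F_{t_{i'}}$; hence $\E[|\sum_iI_{1,7}(\ti,\tio)|^2]=\sum_i\E[|I_{1,7}(\ti,\tio)|^2]\lesssim\sum_i\big(\int_\tio^T\E_\ti[(\delta_{k,i,s}(W^{2,H}))^2]ds\big)^2\lesssim\sum_i(\tio-\ti)^2=o(1)$. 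For $I_{1,10}$ the coefficient depends on the increment through the Taylor point $W^{2,H}(\ti,s,\theta)$, so cross terms need not vanish and I would instead use $\big(\sum_i|I_{1,10}|\big)^2\le N\sum_i|I_{1,10}|^2$ with $\E[|\delta_{k,i,s}(W^{2,H})|^6]\lesssim(\int_\ti^\tio(s-v)^{2H-1}dv)^3$ and the same near/far split in $s$, obtaining a bound of order $N^{\max\{-6H,-1\}}\to0$. Finally, $I_{1,8}$ and $I_{1,9}$ combine the two mechanisms: for $I_{1,8}$, at fixed $(i,k,\ell,j)$ with $k\ne\ell$ I would run the integration-by-parts argument of Lemma~\ref{lemma:tec1}(iii) on the product of the $[\ti,\tio]$-martingales $r\mapsto\int_\ti^r\partial^2_{x_kx_\ell}Pg_j\,dB_j(v)$ and $r\mapsto\int_\ti^r(s-v)^{H-1/2}dB_k(v)$, noting that the bracket is nonzero only when $j\in\{k,\ell\}$ and is then of size $\int_\ti^\tio(s-v)^{H-1/2}dv$, while the remaining factor $\delta_{\ell,i,s}(W^{2,H})$ is centred and independent of $\mathcal F_\ti$, so that each resulting piece carries an extra power of the mesh and vanishes; for $I_{1,9}$ the inner $dB_j$-integral runs over $[\tio,s]$, disjoint from $[\ti,\tio]$, so successive conditioning on $\mathcal F_\tio$ and $\mathcal F_\ti$ gives $\E_\ti[I_{1,9}(\ti,\tio)]=0$ with vanishing cross terms, and a plain $L^2$ bound (It\^o isometry for the $dB_j$-part, Gaussian moments for the $\delta$-part) gives $o(1)$.

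The step I expect to be the main obstacle is the book-keeping in items (i) and (ii): matching the first order expansion of the semigroup increment against the centred diagonal Taylor term, and then showing that \emph{all} the accumulated remainders --- the $\delta^2$-centring error, the $W^{2,H}(\tio,s)\to W^{2,H}(\ti,s)$ replacement error, the off-diagonal contribution in (ii), and the quadratic remainder of the $\tau$-Taylor expansion --- are $o(1)$. Getting the last of these right uniformly in $H\in(0,1)$ is the subtle point and is what forces the near-diagonal/far-diagonal splitting of every $s$-integral; everything else is a routine second-moment computation, since $f\in\mathcal{S}_{ad}$ makes all spatial derivatives of $f$ (hence of $f^a$ and of $g_j$) uniformly bounded.
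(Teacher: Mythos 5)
Your proposal matches the paper's proof in all essentials: items (i)--(ii) are established by the same cancellation mechanism (the heat-equation identity $\partial_\tau P_\tau=\tfrac12\Delta P_\tau$ played against the centred diagonal quadratic term, with the centring error and the replacement $W^{2,H}(\tio,s)\to W^{2,H}(\ti,s)$ controlled by second-moment estimates and \eqref{eq:incfrac}), and items (iii)--(vi) by the same conditional-centring/orthogonality-across-blocks arguments combined with Gaussian moment bounds for the increments $\delta_{k,i,s}(W^{2,H})$. The only cosmetic deviations are in (iv), where you rerun the martingale integration-by-parts of Lemma \ref{lemma:tec1}(iii) whereas the paper interchanges the order of integration and applies an isometry-type computation, and in (vi), where Cauchy--Schwarz replaces the paper's triangle inequality in $L^2(\Omega)$ --- both immaterial to the conclusion.
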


\begin{proof}
\textbf{Proof of (i)\\\\}
\noindent
As we will see some cancellations appear among the terms in the rest. We start with one of these cancellations, that is we first prove that 
\begin{equation}
\label{eq:cancel1}
\lim_{N\to+\infty} \sum_{i = 0}^N I_{1,2}(t_i,t_{i+1}) + \lim_{N\to+\infty} \sum_{i = 0}^N I_{1,6}(t_i,t_{i+1}) \overset{{L^2(\Omega)}}{=} 0. 
\end{equation}
Recall first that
\begin{align}
\label{eq:I12temp1}
I_{1,2}(t_i,t_{i+1}) &= \int_{t_{i+1}}^T \left[P_{\frac{1}{2H}(s-t_{i+1})^{2H}} - P_{\frac{1}{2H}(s-t_i)^{2H}}\right]f(s,W^{2,H}(t_{i+1},s))ds\nonumber\\ 
&= -\frac 1 2 \int_{t_{i+1}}^T\int_{t_i}^{t_{i+1}} (s-u)^{2H-1}\Delta P_{\frac{1}{2H}(s-u)^{2H}}f(s,W^{2,H}(t_{i+1},s))du ds\nonumber\\
&= -\frac 1 2 \int_{t_i}^{t_{i+1}} \int_{t_{i+1}}^T (s-u)^{2H-1}\Delta P_{\frac{1}{2H}(s-u)^{2H}}f(s,W^{2,H}(t_{i+1},s))ds du.
\end{align}
%%%%
Concerning the term $I_{1,6}(t_i,t_{i+1})$ we have
\begin{align*}
&I_{1,6}(t_i,t_{i+1})\\
&= \frac12 \sum_{k=1}^d \int_{t_{i+1}}^T \frac{\partial^2}{\partial y_k^2} P_{\frac{1}{2H}(s-t_i)^{2H}} f\left(s,W^{2,H}(t_i,s)\right) \; \left(\delta_{k,i,s}(W^{2,H})\right)^2 ds.
\end{align*}
So 
\begin{align*}
&I_{1,6}(t_i,t_{i+1})\\
&= \frac12 \sum_{k=1}^d \int_{t_{i+1}}^T \frac{\partial^2}{\partial y_k^2} P_{\frac{1}{2H}(s-t_i)^{2H}} f\left(s,W^{2,H}(t_i,s)\right) \; \left(\left|\delta_{k,i,s}(W^{2,H})\right|^2-\int_{t_i}^{t_{i+1}} (s-u)^{2H-1} du\right) ds\\
&+\frac12 \sum_{k=1}^d \int_{t_i}^{t_{i+1}} \int_{t_{i+1}}^T (s-u)^{2H-1} \Delta P_{\frac{1}{2H}(s-t_i)^{2H}} f\left(s,W^{2,H}(t_i,s)\right) ds du\\
&=:I_{1,6,1}(t_i,t_{i+1})+I_{1,6,2}(t_i,t_{i+1}).
\end{align*}
As a consequence using (\ref{eq:I12temp1}), and letting : 
\begin{align}
\label{eq:I12temp2}
&\hspace{-2em} A(\ti,\tio)\nonumber\\
&\hspace{-2em} := \frac12 \int_{t_i}^{t_{i+1}} \int_{t_{i+1}}^T (s-u)^{2H-1} \left( \Delta P_{\frac{1}{2H}(s-t_i)^{2H}} f\left(s,W^{2,H}(t_i,s)\right)-\Delta P_{\frac{1}{2H}(s-u)^{2H}}f(s,W^{2,H}(t_{i+1},s))\right) ds du,
\end{align}
$I_{1,2}(\ti,\tio)+I_{1,6}(\ti,\tio)$ writes down as 
\begin{align*}
&I_{1,2}(\ti,\tio)+I_{1,6}(\ti,\tio)=I_{1,6,1}(t_i,t_{i+1})+ A(\ti,\tio).
\end{align*}
Hence, (\ref{eq:cancel1}) is proved if we prove
\begin{equation}
\label{eq:I12temp3}
\lim_{N\to +\infty}\E\left[\left| \sum_{i=0}^{N-1} I_{1,6,1}(t_i,t_{i+1}) \right|^2\right] = 0,
\end{equation}
and
\begin{equation}
\label{eq:I12temp4}
\lim_{N\to +\infty}\E\left[\left| \sum_{i=0}^{N-1} A(t_i,t_{i+1}) \right|^2\right] =0.
\end{equation}
We start with an analysis of Term $I_{1,6,1}(\ti,\tio)$, and we write $I_{1,6,1}(\ti,\tio)=\frac12 \sum_{k=1}^d I_{1,6,1,k}(\ti,\tio)$, with 
$$ I_{1,6,1,k}(\ti,\tio):= \int_{t_{i+1}}^T \frac{\partial^2}{\partial y_k^2} P_{\frac{1}{2H}(s-t_i)^{2H}} f\left(s,W^{2,H}(t_i,s)\right) \; \left(\left|\delta_{k,i,s}(W^{2,H})\right|^2-\int_{t_i}^{t_{i+1}} (s-u)^{2H-1} du\right) ds.$$
We have by letting $\rho_{i,s}:=\frac{\partial^2}{\partial y_k^2} P_{\frac{1}{2H}(s-t_i)^{2H}} f\left(s,W^{2,H}(t_i,s)\right)$, and 
\begin{equation}
\label{eq:epsilon}
\epsilon_{i,s,k}:=\left|\delta_{k,i,s}(W^{2,H})\right|^2-\int_{t_i}^{t_{i+1}} (s-u)^{2H-1} du.
\end{equation}
We have
\begin{align*}
&\E\left[\left|\sum_{i=0}^{N-1} I_{1,6,1,k}(t_i,t_{i+1})\right|^2\right]\\
&=2\sum_{i,i'=0;i<i'}^{N-1} \int_{t_{i+1}}^T \int_{t_{i'+1}}^T \E\left[ \rho_{i,s} \rho_{i',s'} \epsilon_{i,s,k} \underbrace{\E_{t_{i'}}\left[\epsilon_{i',s',k}\right]}_{=0} \right] ds ds'\\
&+\sum_{i=0}^{N-1} \int_{t_{i+1}}^T \int_{t_{i+1}}^T \E\left[ \rho_{i,s} \rho_{i,s'} \epsilon_{i,s,k} \epsilon_{i,s',k} \right] ds ds'\\
&\leq C\sum_{i=0}^{N-1} \int_{t_{i+1}}^T \int_{t_{i+1}}^T \E\left[|\epsilon_{i,s,k} \epsilon_{i,s',k}| \right] ds ds'\\
&\leq C\sum_{i=0}^{N-1} \left(\int_{t_{i+1}}^T \int_{t_i}^{\tio} (s-v)^{2H-1} dv ds\right)^2\\
&\underset{N\to+\infty}{\longrightarrow} 0.
\end{align*}
So (\ref{eq:I12temp3}) is proved. 
Convergence (\ref{eq:I12temp4}) is obtained as follows. Note first that : 
\begin{align*}
&\left|\Delta P_{\frac{1}{2H}(s-t_i)^{2H}} f\left(s,W^{2,H}(t_i,s)\right) -\Delta P_{\frac{1}{2H}(s-u)^{2H}}f(s,W^{2,H}(t_{i+1},s))\right|\\
&\leq \left|\Delta P_{\frac{1}{2H}(s-t_i)^{2H}} f\left(s,W^{2,H}(t_i,s)\right) -\Delta P_{\frac{1}{2H}(s-t_i)^{2H}}f(s,W^{2,H}(t_{i+1},s))\right|\\
&+\left|\Delta P_{\frac{1}{2H}(s-t_i)^{2H}}f(s,W^{2,H}(t_{i+1},s))-\Delta P_{\frac{1}{2H}(s-u)^{2H}}f(s,W^{2,H}(t_{i+1},s))\right|\\
&\leq C \sum_{k=1}^d \left|\delta_{k,i,s}(W^{2,H})\right|+C \int_{t_i}^u (s-r)^{2H-1} dr, 
\end{align*} 
where $C$ depends on the sup norms of partial derivatives of $\varphi$ (recall (\ref{eq:defif})) up to order $4$ and where we have used the definition of the Heat semigroup as in (\ref{eq:I12temp1}). Thus, since 
$$ \E\left[\delta_{k,i,s}(W^{2,H}) \delta_{k,i',s}(W^{2,H})\right]=0, \forall i\neq i', $$
we have (recalling (\ref{eq:incfrac}))
\begin{align*}
&\E\left[\left| \sum_{i=0}^{N-1} A(t_i,t_{i+1}) \right|^2\right]\\
&\leq C \sum_{k=1}^d \E\left[\left| \sum_{i=0}^{N-1} \int_{t_i}^{t_{i+1}} \int_{t_{i+1}}^T (s-u)^{2H-1} \left|\delta_{k,i,s}(W^{2,H})\right| ds du \right|^2\right]\\
&+ C \left|\sum_{i=0}^{N-1} \int_{t_i}^{t_{i+1}} \int_{t_{i+1}}^T (s-u)^{2H-1} \int_{t_i}^u (s-r)^{2H-1} dr ds du\right|^2 \\
&\leq C \sum_{k=1}^d \left(\sum_{i=0}^{N-1} \int_{t_i}^{t_{i+1}} \int_{t_{i+1}}^T (s-u)^{2H-1} |t_{i+1}-t_i|^{\min\{H,1/2\}} ds du \right)^2\\
%&\leq C \left(\sum_{i=0}^{N-1} \int_{t_{i+1}}^T \left(\int_{t_i}^{t_{i+1}} (s-v)^{2H-1} dv\right)^{3/2} ds \right)^2+ C \left(\sum_{i=0}^{N-1} \int_{t_{i+1}}^T \left(\int_{t_i}^{t_{i+1}} (s-v)^{2H-1} dv\right)^2 ds\right)^2 \\
%&\leq C \left(\frac{1}{\sqrt{N}} \sum_{i=0}^{N-1} \int_{t_{i+1}}^T \int_{t_i}^{t_{i+1}} (s-v)^{3H-3/2} dv ds \right)^2+ C \left(\frac{1}{N} \sum_{i=0}^{N-1} \int_{t_{i+1}}^T \int_{t_i}^{t_{i+1}} (s-v)^{4H-2} dv ds\right)^2 \\
%&= C \left(\frac{1}{N^{1/2}} \sum_{i=0}^{N-1} \left[(T-t_i)^{3H+1/2}-(T-t_{i+1})^{3H+1/2}-N^{-3H-1/2}\right] \right)^2\\
%&+ C \left(\frac{1}{N} \sum_{i=0}^{N-1} \left[(T-t_i)^{4H}-(T-t_{i+1})^{4H}-N^{-4H}\right]\right)^2\\
&\underset{N\to+\infty}{\longrightarrow} 0,
\end{align*}
which proves (\ref{eq:I12temp4}).\\\\
\noindent
\textbf{Proof of (ii)\\\\}
The second cancellation is the following 
\begin{equation}
\label{eq:cancel2}
\lim_{N\rightarrow \infty} \sum_{i = 0}^N I_{2,1}(t_i,t_{i+1})+I_{2,3}(t_i,t_{i+1}) = 0.
\end{equation}
Before getting into the computations, it is worth noting that $I_{2,1}(t_i,t_{i+1})$ (respectively $I_{2,3}(t_i,t_{i+1})$) has the same structure (up to the Brownian integral) than $I_{1,2}(t_i,t_{i+1})$ (respectively $I_{1,6}(t_i,t_{i+1})$ and $I_{1,7}(t_i,t_{i+1})$). So the proof will follow the same lines as in the one of (i). For the sake of completeness, we tough provide the main arguments. 
Recall that
\begin{align*}
&\hspace{-2em}I_{2,1}(t_i,t_{i+1})\\
&\hspace{-2em}=\sum_{j=1}^d \int_{t_{i+1}}^T \int_u^T \left[ P_{\frac{1}{2H}(s-t_{i+1})^{2H}} - P_{\frac{1}{2H}(s-t_i)^{2H}} \right] g_j(s,u,W^{2,H}(t_{i+1},s))ds dB_j(u) \\
&\hspace{-2em}=-\frac12\sum_{j=1}^d \int_{t_{i+1}}^T \int_u^T \int_{t_i}^{t_{i+1}} (s-r)^{2H-1} \Delta P_{\frac{1}{2H}(s-r)^{2H}} g_j(s,u,W^{2,H}(t_{i},s)) dr ds dB_j(u)\\
&\hspace{-2em}-\frac12\sum_{j=1}^d \int_{t_{i+1}}^T \int_u^T \int_{t_i}^{t_{i+1}} (s-r)^{2H-1} \Delta P_{\frac{1}{2H}(s-r)^{2H}}\left(g_j(s,u,W^{2,H}(t_{i+1},s))- g_j(s,u,W^{2,H}(t_{i},s))\right) dr ds dB_j(u)\\
&\hspace{-2em}=-\frac12\sum_{j=1}^d \int_{t_{i+1}}^T \int_u^T \int_{t_i}^{t_{i+1}} (s-r)^{2H-1} \Delta P_{\frac{1}{2H}(s-t_i)^{2H}} g_j(s,u,W^{2,H}(t_{i},s)) dr ds dB_j(u)\\
&\hspace{-2em}-\frac12\sum_{j=1}^d \int_{t_{i+1}}^T \int_u^T \int_{t_i}^{t_{i+1}} (s-r)^{2H-1} \left(\Delta P_{\frac{1}{2H}(s-r)^{2H}} -\Delta P_{\frac{1}{2H}(s-t_i)^{2H}} \right)g_j(s,u,W^{2,H}(t_{i},s)) dr ds dB_j(u)\\
&\hspace{-2em}-\frac12\sum_{j=1}^d \sum_{k,\ell=1}^d \int_{t_{i+1}}^T \int_u^T \int_{t_i}^{t_{i+1}} (s-r)^{2H-1} \int_0^1 \frac{\partial^3}{\partial y_k^2 \partial_{y_\ell}} P_{\frac{1}{2H}(s-r)^{2H}} g_j(s,u,W^{2,H}(t_{i},s,\theta)) d\theta \; \delta_{\ell,i,s}(W^{2,H}) dr ds dB_j(u),
\end{align*}
where we recall Notation (\ref{eq:inctheta}).
In addition   
\begin{align*}
&I_{2,3}(t_i,t_{i+1})\\
&=\frac12 \sum_{j=1}^d \int_{t_{i+1}}^T \int_u^T P_{\frac{1}{2H}(s-t_i)^{2H}} \nabla^2 g_j\left(s,u,W^{2,H}(t_i,s) \right) \cdot \left(\delta_{i,s}(W^{2,H})\right)^2 ds dB_j(u) \\
&=\frac12 \sum_{j=1}^d \sum_{k,\ell=1; k\neq \ell}^d \int_{t_{i+1}}^T \int_u^T \frac{\partial^2}{\partial x_k \partial x_\ell} P_{\frac{1}{2H}(s-t_i)^{2H}} g_j\left(s,u,W^{2,H}(t_i,s) \right) \delta_{i,k,s}(W^{2,H}) \delta_{i,\ell,s}(W^{2,H}) ds dB_j(u) \\
&+\frac12 \sum_{j=1}^d \sum_{k=1}^d \int_{t_{i+1}}^T \int_u^T \frac{\partial^2}{\partial y_k^2} P_{\frac{1}{2H}(s-t_i)^{2H}} g_j\left(s,u,W^{2,H}(t_i,s) \right) \left(\delta_{i,k,s}(W^{2,H})\right)^2 ds dB_j(u).
\end{align*}
Hence 
\begin{align*}
&\hspace{-2em}I_{2,1}(t_i,t_{i+1})+I_{2,3}(t_i,t_{i+1})\\
&\hspace{-2em}=\frac12 \sum_{j=1}^d \sum_{k,\ell=1; k\neq \ell}^d \int_{t_{i+1}}^T \int_u^T \frac{\partial^2}{\partial x_k \partial x_\ell} P_{\frac{1}{2H}(s-t_i)^{2H}} g_j\left(s,u,W^{2,H}(t_i,s)\right) \delta_{i,k,s}(W^{2,H}) \delta_{i,\ell,s}(W^{2,H}) ds dB_j(u) \\
&\hspace{-2em}+\frac12 \sum_{j=1}^d \sum_{k=1}^d \int_{t_{i+1}}^T \int_u^T \frac{\partial^2}{\partial x_k^2} P_{\frac{1}{2H}(s-t_i)^{2H}} g_j(s,u,W^{2,H}(t_i,s)) \left[\left(\delta_{i,k,s}(W^{2,H})\right)^2-\int_{t_i}^{t_{i+1}} (s-r)^{2H-1} dr\right] ds dB_j(u)\\
&\hspace{-2em}-\frac12\sum_{j=1}^d \sum_{k,\ell=1}^d \int_{t_{i+1}}^T \int_u^T \int_{t_i}^{t_{i+1}} (s-r)^{2H-1} \int_0^1 \frac{\partial^3}{\partial x_k^2 \partial_{x_\ell}} P_{\frac{1}{2H}(s-r)^{2H}} g_j(s,u,W^{2,H}(t_{i},s,\theta)) d\theta \; \delta_{\ell,i,s}(W^{2,H}) dr ds dB_j(u)\\
&\hspace{-2em}-\frac12\sum_{j=1}^d \int_{t_{i+1}}^T \int_u^T \int_{t_i}^{t_{i+1}} (s-r)^{2H-1} \left(\Delta P_{\frac{1}{2H}(s-r)^{2H}} -\Delta P_{\frac{1}{2H}(s-t_i)^{2H}} \right)g_j(s,u,W^{2,H}(t_{i},s)) dr ds dB_j(u)\\
&\hspace{-2em}=:C_1(\ti,\tio)+C_2(\ti,\tio)+C_3(\ti,\tio)+C_4(\ti,\tio).
\end{align*}
So obviously, (\ref{eq:cancel2}) is proved if we prove that 
\begin{equation}
\label{eq:eq:cancel2temp1}
\lim_{N\to+\infty} \E\left[\left|\sum_{i=0}^{N-1} C_r(\ti,\tio)\right|^2\right] =0, \quad \forall r\in \{1,2,3,4\}.
\end{equation}
These three terms are of similar form and their treatment will follow the similar scheme, so we give all the details for $C_1(\ti,\tio)$ and present only the key ingredients for $C_2(\ti,\tio)$ and $C_3(\ti,\tio)$. Hence we start with $C_1(\ti,\tio)$.\\ 
%$C$ will denote a generic constant depending only on $H$, $d$ and on $f$ which may differ from line to line. Let also 
%$$ C_{Df,p,2}:=\sum_{j=1}^d \E\left[\sup_{0\leq u\leq s\leq T} \|\nabla^2 D_j (f(s,\cdot))(u)\|_\infty^p\right]^{1/p}.$$
Set $\mu_{s,i,k,\ell,u}:=\frac{\partial^2}{\partial x_k \partial x_\ell} P_{\frac{1}{2H}(s-t_i)^{2H}} g_j\left(s,u,W^{2,H}(t_i,s) \right)$.
We write $C_1(\ti,\tio)$ as 
$$C_1(\ti,\tio) = \sum_{j=1}^d \sum_{k,\ell=1;k\neq \ell}^d C_{1,j,k,\ell}(\ti,\tio)$$ 
with obvious notations. We have for $j,k,\ell$ (with $k\neq \ell)$,
\begin{align*}
&\E\left[\left|\sum_{i=0}^{N-1} C_{1,j,k,\ell}(\ti,\tio)\right|^2\right]\\
&\hspace{-4em}= 2\sum_{i,i'=0; i< i'}^{N-1} \int_{t_{i'+1}}^T \int_u^T \int_u^T \E\left[\mu_{s,i,k,\ell,u} \mu_{s',i',k,\ell,u}\delta_{i,k,s}(W^{2,H}) \delta_{i,\ell,s}(W^{2,H}) \underbrace{\E_{t_{i'}}\left[\delta_{i',k,s'}(W^{2,H}) \delta_{i',\ell,s'}(W^{2,H})\right]}_{=0} \right] ds ds' du\\
&\hspace{-4em}+ \sum_{i=0}^{N-1} \int_{t_{i+1}}^T \int_u^T \int_u^T \E\left[\mu_{s,i,u} \mu_{s',i,u}\delta_{i,k,s}(W^{2,H}) \delta_{i,\ell,s}(W^{2,H}) \delta_{i,k,s'}(W^{2,H}) \delta_{i,\ell,s'}(W^{2,H}) \right] ds ds' du\\
&\hspace{-4em}\leq C \sum_{i=0}^{N-1} \int_{t_{i+1}}^T \left(\int_u^T \int_{t_i}^{\tio} (s-v)^{2H-1} dv ds\right)^2 du\\
&\hspace{-4em}\leq C \sum_{i=0}^{N-1} \left(\int_{\tio}^T \int_{t_i}^{\tio} (s-v)^{2H-1} dv ds\right)^2\\
&\hspace{-4em}\underset{N\to+\infty}{\longrightarrow} 0.
\end{align*}
With the previous notation and using Notation (\ref{eq:epsilon}), 
$$ C_2(\ti,\tio)= \frac12 \sum_{j=1}^d \sum_{k=1}^d \int_{t_{i+1}}^T \int_u^T \mu_{s,i,k,k,u} \epsilon_{i,s,k} ds dB_j(u).$$
So we have 
\begin{align*}
&\E\left[\left|\sum_{i=0}^{N-1} C_2(\ti,\tio)\right|^2\right]\\
&\hspace{-4em}\leq C \sum_{j=1}^d \sum_{k=1}^d \sum_{i,i'=0}^{N-1} \int_{t_{i'+1}\vee t_{i+1}}^T \int_u^T \int_u^T \E\left[\epsilon_{i,s,k} \epsilon_{i',s,k'} \right] ds ds' du\\
&\hspace{-4em}\leq C \left(\sum_{i=0}^{N-1} \left(\int_{\tio}^T \int_{\ti}^{\tio} (s-v)^{2H-1} dv ds\right)^2\right)^2 \\
&\hspace{-4em}\underset{N\to+\infty}{\longrightarrow} 0.
\end{align*}
We now turn to Term $C_3(\ti,\tio)$, for which we have :
\begin{align*}
&\E\left[\left|\sum_{i=0}^{N-1} C_3(\ti,\tio)\right|^2\right]\\
&\hspace{-4em}\leq C \sum_{i,i'=0}^{N-1} \int_{t_{i'+1}\vee t_{i+1}}^T \int_u^T \int_u^T \int_{t_i}^{t_{i+1}} \int_{t_i}^{t_{i+1}} (s-r)^{2H-1} (s'-r')^{2H-1} \E\left[|\delta_{\ell,i,s}(W^{2,H}) \delta_{\ell,i',s'}(W^{2,H})|\right] dr dr' ds ds' du\\
&\hspace{-4em}\leq C \left(\sum_{i=0}^{N-1} \left(\int_{\tio}^T \int_{t_i}^{t_{i+1}} (s-r)^{2H-1} \left(\int_{t_i}^{\tio} (s-v)^{2H-1} dv\right)^{1/2} dr ds\right)^2\right)^2\\
&\hspace{-4em}\leq C \left(\sum_{i=0}^{N-1} \left(\int_{\tio}^T \int_{t_i}^{\tio} (s-v)^{2H-1} dv\right)^{3} ds\right)^2\\
&\hspace{-4em}\underset{N\to+\infty}{\longrightarrow} 0.
\end{align*}
Following the same lines and using once again the uniform boundedness of derivatives (spatial and in the Malliavin sense) of $f$, we get immediately that 
\begin{align*}
\E\left[\left|\sum_{i=0}^{N-1} C_4(\ti,\tio)\right|^2\right]&\leq C \left(\sum_{i=0}^{N-1} \left(\int_{\tio}^T \int_{t_i}^{\tio} |t_{i+1}-t_i|^{\min\{2H,1\}} dv ds\right)^2\right)^2\\
&\underset{N\to+\infty}{\longrightarrow} 0.
\end{align*}

\textbf{Proof of (iii)\\\\}
We have $I_{1,7}(\ti,\tio)=\frac12 \sum_{k,\ell=1;k\neq\ell}^d I_{1,7,k,\ell}(\ti,\tio)$ with 
$$ I_{1,7,k,\ell}(\ti,\tio):=\int_{t_{i+1}}^T \frac{\partial^2}{\partial x_k \partial x_\ell} P_{\frac{1}{2H}(s-t_i)^{2H}} f^a(s,t_i,W^{2,H}(t_i,s)) \; \delta_{k,i,s}(W^{2,H}) \delta_{\ell,i,s}(W^{2,H}) ds. $$
Fix $k\neq \ell$ and set : 
$$ \rho_{i,k,\ell,s} :=\frac{\partial^2}{\partial x_k \partial x_\ell} P_{\frac{1}{2H}(s-t_i)^{2H}} f^a(s,t_i,W^{2,H}(t_i,s)).$$
We have 
\begin{align*}
&\E\left[\left|\sum_{i=0}^{N-1} I_{1,7,k,\ell}(\ti,\tio)\right|^2\right]\\
&=\E\left[\left|\sum_{i=0}^{N-1} \int_{t_{i+1}}^T \frac{\partial^2}{\partial x_k \partial x_\ell} P_{\frac{1}{2H}(s-t_i)^{2H}} f^a(s,t_i,W^{2,H}(t_i,s)) \; \delta_{k,i,s}(W^{2,H}) \delta_{\ell,i,s}(W^{2,H}) ds\right|^2\right]\\
&=2 \sum_{i,i'=0;i<i'}^{N-1} \int_{t_{i+1}}^T \int_{t_{i'+1}}^T \E\left[ \rho_{i,k,\ell,s} \rho_{i',k,\ell,s'} \delta_{k,i,s}(W^{2,H}) \delta_{\ell,i,s}(W^{2,H}) \underbrace{\E_{t_{i'}}\left[\delta_{k,i',s'}(W^{2,H}) \delta_{\ell,i',s'}(W^{2,H})\right]}_{=0}\right] ds ds'\\
&+\sum_{i=0}^{N-1} \int_{t_{i+1}}^T \int_{t_{i+1}}^T \E\left[ \rho_{i,k,\ell,s} \rho_{i,k,\ell,s'} \delta_{k,i,s}(W^{2,H}) \delta_{\ell,i,s}(W^{2,H}) \delta_{k,i',s'}(W^{2,H}) \delta_{\ell,i',s'}(W^{2,H})\right] ds ds'\\
&\leq C \sum_{i=0}^{N-1} \left(\int_{t_{i+1}}^T \int_{\ti}^{\tio} (s-v)^{2H-1} dv ds\right)^2 \\
&\leq C N^{-1} \sum_{i=0}^{N-1} \int_{t_{i+1}}^T \int_{\ti}^{\tio} (s-v)^{4H-2} dv ds \\
&= C N^{-1} \sum_{i=0}^{N-1} \left[(T-t_i)^{4H}-(t_{i+1}-t_i)^{4H}-(T-t_{i+1})^{4H}\right] \\
&\underset{N\to+\infty}{\longrightarrow} 0.
\end{align*}
\textbf{Proof of (iv)\\\\}
Term $I_{1,8}(\ti,\tio)$

$$ I_{1,8}(\ti,\tio):=\sum_{j=1}^d \sum_{k,\ell=1;k\neq \ell}^d I_{1,8,k,\ell}(\ti,\tio),$$
with
$$ I_{1,8,j,k,\ell}(\ti,\tio):= \frac12 \int_{t_{i+1}}^T \int_{t_{i}}^{\tio}\frac{\partial^2}{\partial x_k \partial x_\ell} P_{\frac{1}{2H}(s-t_i)^{2H}} g_j(s,u,W^{2,H}(t_i,s)) dB_j(u) \; \delta_{k,i,s}(W^{2,H}) \delta_{\ell,i,s}(W^{2,H}) ds.$$
Fix $k\neq \ell$, $j$. Set 
$$ \gamma_{k,\ell,i,u,s}:=\frac{\partial^2}{\partial x_k \partial x_\ell} P_{\frac{1}{2H}(s-t_i)^{2H}} g_j(s,u,W^{2,H}(t_i,s)).$$
Fix $i$, we have
$$ \int_{t_{i+1}}^T \int_{t_{i}}^{\tio} \gamma_{k,\ell,i,u,s} dB_j(u) \; \delta_{k,i,s}(W^{2,H}) \delta_{\ell,i,s}(W^{2,H}) ds = \int_{t_{i}}^{\tio} \int_{t_{i+1}}^T \gamma_{k,\ell,i,u,s} \delta_{k,i,s}(W^{2,H}) \delta_{\ell,i,s}(W^{2,H}) ds dB_j(u).$$
Then, it follows that
\begin{align*}
&\E\left[\left|\sum_{i=0}^{N-1} I_{1,8,j,k,\ell}(\ti,\tio)\right|^2\right]\\
&= \E\left[\left|\sum_{i=0}^{N-1} \int_{t_{i}}^{\tio} \int_{t_{i+1}}^T \gamma_{k,\ell,i,u,s} \delta_{k,i,s}(W^{2,H}) \delta_{\ell,i,s}(W^{2,H}) ds dB_j(u) \right|^2\right]\\
&= \sum_{i=0}^{N-1} \int_{t_{i}}^{\tio} \E\left[ \left| \int_{t_{i+1}}^T \gamma_{k,\ell,i,u,s} \delta_{k,i,s}(W^{2,H}) \delta_{\ell,i,s}(W^{2,H}) ds \right|^2 \right] du\\
&= \sum_{i=0}^{N-1} \int_{t_{i+1}}^T \int_{t_{i+1}}^T \E\left[ \int_{t_{i}}^{\tio} \gamma_{k,\ell,i,u,s} \gamma_{k,\ell,i,u,s'} du \; \delta_{k,i,s}(W^{2,H}) \delta_{k,i,s'}(W^{2,H}) \delta_{\ell,i,s}(W^{2,H}) \delta_{\ell,i,s'}(W^{2,H}) \right] ds ds' \\
&\leq C N^{-1} \sum_{i=0}^{N-1} \left(\int_{t_{i+1}}^T \int_{\ti}^{\tio} (s-v)^{2H-1} dv ds\right)^2 \\
&\underset{N\to+\infty}{\longrightarrow} 0.
\end{align*}
\textbf{Proof of (v)\\\\}

Term $I_{1,9}(\ti,\tio)$

$$ I_{1,9}(\ti,\tio):=\frac12 \sum_{j=1}^d \sum_{k,\ell=1;k\neq \ell}^d I_{1,9,k,\ell}(\ti,\tio),$$
with
$$\hspace{-2em} I_{1,9,j,k,\ell}(\ti,\tio):= \frac12 \int_{t_{i+1}}^T \int_{t_{i+1}}^T \int_{t_{i+1}}^{s}\frac{\partial^2}{\partial x_k \partial x_\ell} P_{\frac{1}{2H}(s-t_i)^{2H}} g_j(s,u,W^{2,H}(t_i,s)) dB_j(u) \; \delta_{k,i,s}(W^{2,H}) \delta_{\ell,i,s}(W^{2,H}) ds.$$
Fix $k\neq \ell$, $j$. Set 
$$ \gamma_{k,\ell,i,u,s}:=\frac{\partial^2}{\partial x_k \partial x_\ell} P_{\frac{1}{2H}(s-t_i)^{2H}} g_j(s,u,W^{2,H}(t_i,s)).$$

%\b{Stochastic Fubini :\\}
%$$ \int_{t_{i+1}}^T \int_{t_{i+1}}^{s}\gamma_{k,\ell,i,u,s} dB_j(u) \; \delta_{k,i,s}(W^{2,H}) \delta_{\ell,i,s}(W^{2,H}) ds = \int_{t_{i+1}}^T \int_{u}^{T}\gamma_{k,\ell,i,u,s} \; \delta_{k,i,s}(W^{2,H}) \delta_{\ell,i,s}(W^{2,H}) ds dB_j(u).$$

\begin{align*}
&\E\left[\left|\sum_{i=0}^{N-1} I_{1,9,j,k,\ell}(\ti,\tio)\right|^2\right]\\
&=\E\left[\left|\sum_{i=0}^{N-1} \int_{t_{i+1}}^T \int_{t_{i+1}}^{s}\gamma_{k,\ell,i,u,s} dB_j(u) \; \delta_{k,i,s}(W^{2,H}) \delta_{\ell,i,s}(W^{2,H}) ds \right|^2\right]\\
&=2 \sum_{i,i'=0;i<i'}^{N-1} \int_{t_{i+1}}^T \int_{t_{i'+1}}^T \E\left[\int_{t_{i+1}}^{s}\gamma_{k,\ell,i,u,s} dB_j(u) \; \delta_{k,i,s}(W^{2,H}) \delta_{\ell,i,s}(W^{2,H}) \delta_{k,i',s'}(W^{2,H}) \delta_{\ell,i',s'}(W^{2,H}) \times \right.\\ &\hspace{12em}\left.\underbrace{\E_{t_{i'+1}}\left[\int_{t_{i'+1}}^{s'}\gamma_{k,\ell,i',u',s'} dB_j(u')\right]}_{=0}\right]ds ds'\\
&=\sum_{i=0}^{N-1} \int_{t_{i+1}}^T \int_{t_{i+1}}^T \E\left[\int_{t_{i+1}}^{s\wedge s'} \gamma_{k,\ell,i,u,s} \gamma_{k,\ell,i,u,s'} du \; \delta_{k,i,s}(W^{2,H}) \delta_{\ell,i,s}(W^{2,H}) \delta_{k,i,s'}(W^{2,H}) \delta_{\ell,i,s'}(W^{2,H}) \right]ds ds' \\
&\leq C \sum_{i=0}^{N-1} \left(\int_{t_{i+1}}^T \int_{t_i}^{\tio} (s-v)^{2H-1} dv ds\right)^2 \\
&\underset{N\to+\infty}{\longrightarrow} 0.
\end{align*}

\textbf{Proof of (vi)\\\\}
Term $I_{1,10}(\ti,\tio)$

$$ I_{1,10}(\ti,\tio):=\sum_{j,k,\ell=1}^d I_{1,10,j,k,\ell}(\ti,\tio),$$
with 
$$ I_{1,10,j,k,\ell}(\ti,\tio):= \frac16 \int_{\tio}^T \mu_{i,s,j,k,\ell} \; \delta_{j,i,s}(W^{2,H}) \delta_{k,i,s}(W^{2,H}) \delta_{\ell,i,s}(W^{2,H}) ds,$$
where 
$$ \mu_{i,s,j,k,\ell}:=\int_0^1 \frac{\partial^3}{\partial x_j \partial x_k \partial x_\ell} P_{\frac{1}{2H}(s-t_i)^{2H}} f\left(s,W^{2,H}(t_i,s,\theta)\right) d\theta $$

\begin{align*}
&\E\left[\left|\sum_{i=0}^{N-1} I_{1,10,j,k,\ell}(\ti,\tio) \right|^2\right]^{1/2}\\
&\leq \sum_{i=0}^{N-1} \int_{\tio}^T \E\left[\left|\delta_{j,i,s}(W^{2,H}) \delta_{k,i,s}(W^{2,H}) \delta_{\ell,i,s}(W^{2,H})\right|^2\right]^{1/2} ds\\
&\leq C \sum_{i=0}^{N-1} \int_{\tio}^T \left(\int_{\ti}^{\tio} (s-v)^{2H-1} dv\right)^{3/2} ds\\ 
&\leq C N^{-1/2} \sum_{i=0}^{N-1} \int_{\tio}^T \int_{\ti}^{\tio} (s-v)^{3H-3/2} dv ds\\
&= C \left(N^{-1/2} \sum_{i=0}^{N-1} \left[(T-t_i)^{3H+1/2}-(T-\tio)^{3H+1/2}\right] \right)- C N^{-3H}\\
&\underset{N\to+\infty}{\longrightarrow} 0. 
\end{align*}
\end{proof}

\begin{lemma}
\label{lemma:jointmeas}
Let $f$ a smooth random field (that is $f \in \mathcal{S}_{ad}$). Then each term in this relation (\ref{eq:mainformula}) admits a version which jointly measurable in $(s,t,x,\omega)$ in $[0,T]^2\times\real^d\times \Omega$ ($s\leq t$). We will always consider this version.
\end{lemma}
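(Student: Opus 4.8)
The plan is to treat separately the four types of summands occurring in~(\ref{eq:mainformula}) --- two parametrised ordinary (Lebesgue) integrals and two It\^o integrals in $dB_j$ --- since there are finitely many of them it is enough to produce for each a version jointly measurable in $(s,t,x,\omega)$ on $\{0\le s\le t\le T\}\times\real^d\times\Omega$ and then intersect the underlying full-measure events. The first thing I would record is that, for $f\in\mathcal S_{ad}$ with representation $f(t,y)=\varphi(t,B(\gamma_1\wedge t),\dots,B(\gamma_n\wedge t),y)$ as in~(\ref{eq:defif}), the auxiliary fields $f^a(r,u,y)=\E_u[f(r,y)]$ and $g_j(r,u,y)=\E_u[(D_jf(r,y))(u)]$ are obtained from $\varphi$ by explicit Gaussian conditioning; hence for $0\le u\le r\le T$ they are continuous in $(r,u,y)$, infinitely differentiable in $y$ with all $y$-derivatives bounded uniformly in $(r,u)$, and $\mathcal F_u$-measurable when $r\ge u$. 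Combining this with a continuous choice of $W^H$ and of $(t,s)\mapsto W^{2,H}(t,s)$, with $P_0=\mathrm{Id}$, and with the continuity of $(\tau,y)\mapsto P_\tau h(y)$ for bounded continuous $h$, one sees that in every term of~(\ref{eq:mainformula}) the integrand is jointly measurable in $(r,u,x,\omega)$ and bounded, together with its $x$-derivatives, uniformly in $(r,u,x,\omega)$; the only singularity is the deterministic weight $(r-u)^{H-1/2}$, which is integrable on $[u,T]$ for every $H\in(0,1)$.

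For the first and third terms (and for the left-hand side of~(\ref{eq:mainformula}), which is of the same type) I would simply invoke Fubini's theorem: the integrand is jointly measurable and dominated, uniformly in $(x,\omega)$, by a fixed integrable function of $(r,u)$, so $\int_s^t\cdots\,dr$ and $\int_s^t\int_u^t\cdots\,dr\,du$ are jointly measurable in $(s,t,x,\omega)$. Moreover, by dominated convergence these functions are even continuous in $(s,t,x)$ for every $\omega$, so joint measurability also follows from the standard fact that a map that is continuous in one group of variables and measurable in the complementary one is jointly measurable (it is the pointwise limit of jointly measurable maps piecewise constant in $(s,t,x)$).

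For the second and fourth terms (the It\^o integrals) I would write such a term as $\int_s^t\Psi(u,s,t,x)\,dB_j(u)$ with $\Psi(u,s,t,x)=\mathbf 1_{[s,t]}(u)\int_u^t P_{\frac1{2H}(r-u)^{2H}}h(r,u,W^{2,H}(u,r)+x)(r-u)^{H-1/2}\,dr$, where $h$ is $\partial_{x_j}f^a$ or $g_j$. For fixed $(s,t,x)$ the integrand $u\mapsto\Psi(u,s,t,x)$ is $(\mathcal F_u)$-adapted and bounded, so the It\^o integral is well defined; the point is to select the versions consistently in the parameters. I would do this by a Kolmogorov continuity argument in $(s,t,x)$: the parameters $s,t$ enter $\Psi$ only through the indicator and the upper limit of integration, and $x$ only by translation, so $|\Psi(u,s,t,x)-\Psi(u,s',t',x')|$ is controlled by $C(|x-x'|)$ plus the $(r-u)^{H-1/2}$-mass of $[u,t]\setminus[u,t']$ plus $\|\Psi\|_\infty$ times the Lebesgue measure of $[s,t]\triangle[s',t']$; the BDG inequality then yields, for every even $p\ge 2$,
\begin{equation*}
\E\bigl[\,\bigl|\textstyle\int_s^t\Psi(u,s,t,x)\,dB_j(u)-\int_{s'}^{t'}\Psi(u,s',t',x')\,dB_j(u)\bigr|^{p}\,\bigr]\le C\,\bigl(|s-s'|+|t-t'|+|x-x'|\bigr)^{p\kappa}
\end{equation*}
for some $\kappa>0$ (one may take $\kappa=1/2$). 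Choosing $p$ with $p\kappa>d+2$ and applying Kolmogorov's theorem on $\{0\le s\le t\le T\}\times K$ for each compact $K\subset\real^d$ produces an a.s.\ continuous-in-$(s,t,x)$ modification; since it agrees with the measurable It\^o integral for each fixed $(s,t,x)$, it is jointly measurable in $(s,t,x,\omega)$ by the fact used above. The same reasoning applies to the fourth term. (An alternative avoiding Kolmogorov: $(s,t,x)\mapsto\mathbf 1_{[s,t]}\Psi(\cdot,s,t,x)$ is continuous into $L^2([0,T]\times\Omega)$ by dominated convergence, so the It\^o isometry makes the integral continuous in $L^2(\Omega)$, hence in probability, and a process continuous in probability on a separable metric parameter space has a jointly measurable modification.)

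The only mildly delicate bookkeeping is the increment estimate above near the diagonal $r=u$ and near $s=t$, i.e.\ verifying that $\int_u^t(r-u)^{H-1/2}\,dr$ and its differences in $t$ (and in $s$, via the indicator) are H\"older with an exponent bounded below uniformly over the parameter region; this is routine because $(r-u)^{H-1/2}$ is integrable for every $H\in(0,1)$ and $\varphi$ together with all its derivatives is bounded. In particular nothing in this lemma requires Assumption~\ref{assumption:main}: that condition is needed only in Step~2 of the proof of Theorem~\ref{thm:main}, where the smoothness of $f$ is relaxed.
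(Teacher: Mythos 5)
Your proof is correct, but it handles the stochastic-integral terms by a different (more self-contained) route than the paper. The paper's own proof is two lines: boundedness of $f$ and its derivatives plus dominated convergence takes care of all the Lebesgue-in-time integrals, and for the terms involving $dB_j$ it simply cites \cite[Theorem IV.63]{Protter_V2}, i.e.\ the general result on the existence of jointly measurable versions of stochastic integrals depending measurably on a parameter. You instead construct the parametrised version by hand: for the $dB_j$-terms you derive BDG moment bounds for increments in $(s,t,x)$ (using that $s,t$ enter only through the indicator and the upper integration limit, and $x$ only through a translation under which the bounded smooth integrand is Lipschitz) and then apply Kolmogorov's continuity criterion, obtaining an a.s.\ continuous-in-$(s,t,x)$ modification, hence joint measurability by the Carath\'eodory-type argument; your alternative remark (continuity in $L^2(\Omega)$, hence in probability, plus the fact that a process continuous in probability over a separable parameter space has a jointly measurable modification) is essentially the standard proof of the theorem the paper cites. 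What your route buys is a stronger conclusion (a continuous-in-parameters version) and independence from the citation, at the cost of the increment estimates; the paper's route is shorter. Two small remarks: the claimed continuity of $f^a$ and $g_j$ in the time variable $r$ is not guaranteed by the definition of $\mathcal{S}_{ad}$ (only measurability in $t$ is assumed), but your argument never actually needs it — joint measurability of the integrands plus boundedness suffices for Fubini, and your Kolmogorov/BDG bounds only use increments in $(s,t,x)$; and you are right that Assumption \ref{assumption:main} plays no role here, exactly as in the paper, where the lemma is stated for $f\in\mathcal{S}_{ad}$ only.
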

\begin{proof}
Recall that $f$ (together with all its derivatives) is by definition bounded. The result is true for all the integrals in $dt$ as a consequence of Lebegue's dominated convergence. Concerning the terms involving a stochastic integral, we refer to \cite[Theorem IV.63]{Protter_V2}.
\end{proof}

\bibliographystyle{plain}
\bibliography{CDR.bib}

\end{document}